\documentclass[leqno,oneside,12pt]{amsart} %leqno is the option to put formula numbers on the left side
% Temporarily "oneside" to have todonotes always on the right
\setlength{\textheight}{22cm}
\setlength{\textwidth}{16cm}
\setlength{\oddsidemargin}{0cm}
\setlength{\evensidemargin}{0cm}
\setlength{\topmargin}{0cm}

\usepackage{pict2e} %para que admita bezier
\setlength{\unitlength}{0.06in}  %si no se superponen mmis diagramas

\usepackage{amssymb}
\usepackage[english]{babel}

\usepackage[textsize=footnotesize,textwidth=20ex,colorinlistoftodos]{todonotes}

%hyperref (ALWAYS last package)
\usepackage{hyperref}
\hypersetup{
    colorlinks=true,
    linkcolor=blue,
    filecolor=magenta, 
    citecolor=red,     
    urlcolor=cyan,
}

%cleveref must be loaded AFTER hyperref but BEFORE the theorems are declared
\usepackage[capitalize]{cleveref}
    \crefname{enumi}{}{}
    \Crefname{enumi}{Item}{Items}
    \crefname{equation}{}{}
    \Crefname{equation}{Equation}{Equations}    

%
 %changing the interline spacing
%
%%%%%%%%% Theorem-like environment %%%%%%%%%%%
%
\theoremstyle{plain} %text of this environment is typesetted in italics
\newtheorem{theorem}{\indent\sc Theorem}[section]
\newtheorem{lemma}[theorem]{\indent\sc Lemma}
\newtheorem{corollary}[theorem]{\indent\sc Corollary}
\newtheorem{proposition}[theorem]{\indent\sc Proposition}

\theoremstyle{definition} %text of this environment is typesetted in roman letters
\newtheorem{definition}[theorem]{\indent\sc Definition}
\newtheorem{remark}[theorem]{\indent\sc Remark}
\newtheorem{example}[theorem]{\indent\sc Example}

%
%If a theorem-like environment should not be numbered,
%add * after \newtheorem, and delete the counter option such as [theorem].

%
%%%%% Proof %%%%%

%%%%%LO ANTERIOR SON INDICACIONES  

\usepackage[active]{srcltx}
\usepackage{pdfsync}
\usepackage{tabularx} 
\usepackage{arydshln}

\newcommand{\B}{\mathcal{B}}
\newcommand{\CC}{\mathbb{C}}

\newcommand{\FF}{\mathbb{F}}
\newcommand{\ZZ}{\mathbb{Z}}

\newcommand{\OO}{\mathbb{O}}
\newcommand{\RR}{\mathbb{R}}

\renewcommand{\hom}{\mathrm{Hom}}
\newcommand{\ii}{\textbf{i}}
\def\Aut{\mathop{\rm Aut}}

\def\dim{\mathop{\hbox{\rm dim}}}
\def\tr{\mathop{\rm tr}}
  
\newcommand{\spf}{\mathfrak{sp}}

\newcommand{\suf}{\mathfrak{su}}
\newcommand{\sof}{\mathfrak{so}}
\newcommand{\slf}{\mathfrak{sl}}

\newcommand{\ad}{\mathop{\mathrm{ad}}}
\newcommand{\id}{\mathrm{id}}

\newcommand{\g}{\mathfrak{g}}

\renewcommand{\a}{\alpha}

\title[Inner ideals of real simple Lie algebras]{Inner ideals of real simple Lie algebras}  
%title of paper and the running head option

\author[C.~Draper]{ %dedication if necessary
Cristina Draper${}^*$ 
%\bigskip \\ %first author's name and the running head option
}  

\author[J.~Meulewaeter]{Jeroen Meulewaeter${}^\star$}

%%%%%%%%%%%%%%% footnote %%%%%%%%%%%%%%%%
\subjclass[2010]{Primary  
17B20, 	 
Secondary 
17B25,  
17B60,  
17B22,  
17B70.  
}
\keywords{Inner ideals, extremal elements, exceptional Lie algebra, simple Lie algebra, structurable algebras, models, incidence geometries.}

\thanks{${}^*$ Supported by    Junta de Andaluc\'{\i}a  through projects  FQM-336, UMA18-FEDERJA-119, and PAIDI project P20\_01391, and  by the Spanish Ministerio de Ciencia e Innovaci\'on   through projects  PID2019-104236GB-I00 and PID2020-118452GB-I00, all of them with FEDER funds.}
\thanks{${}^\star$ Supported by  a PhD Fellowship of the Research Foundation Flanders (Belgium) (F.W.O.- Vlaanderen), 166032/1128720N}

\address[C. Draper]{Departamento de Matem\'{a}tica Aplicada,  Escuela de Ingenier\'\i as Industriales, 
Universidad de M\'{a}laga, 
 29071 M\'{a}laga \\Spain}
\email{cdf@uma.es}
\address[J. Meulewaeter]{Department of Mathematics: Algebra and Geometry, Ghent University, Krijgslaan 281--S25, 9000 Gent\\Belgium}
\email{jeroen.meulewaeter@ugent.be}

\begin{document}

%%%%%%%%%%%%%%%%%%%%%

\maketitle 

% \tableofcontents 
 
\begin{abstract}
 A classification up to automorphism of the inner ideals of the real finite-dimensional simple Lie algebras is given, 
 jointly with precise descriptions in the case of the exceptional Lie algebras.
 \end{abstract}

\section{Introduction }     

Inner ideals in Lie algebras were introduced in \cite{Faulkner1973}, where Faulkner  defines inner ideals of modules for Lie algebras. This notion came from   Jordan algebras' world.   Faulkner's aim was not to develop   a general theory of inner ideals, but he used inner ideals to reconstruct in some way the geometry: a hexagonal geometry with certain collineations is coordinatized by a Jordan division algebra which permits to construct the Lie algebra. We come back later to this idea.

Georgia Benkart in her doctoral thesis \cite{tesisgeorgia} began a systematic research  of the role of the inner ideals, as well as the ad-nilpotent elements, in the study of Lie algebras. 
This topic was suggested to her by Seligman,
taking into account the relevance for the theory of Jordan algebras of studying those ones satisfying the minimum condition on Jordan inner ideals,  obviously  inspired by   associative algebras  satisfying the descending chain condition on its left or right ideals.  
Her following papers \cite{inner76,transactions77}  layed the   groundwork for the development of an Artinian theory for Lie algebras.

A considerable amount of material of inner ideals of Lie algebras, including the results in \cite{ transactions77}, can be found in the recent monograph by the AMS \cite{libroAntonio}, which   tries to show how Jordan theory can be applied to the study of Lie algebras not necessarily of finite dimension.  
Precisely the book contains  a whole chapter about an Artinian theory for Lie algebras \cite[Chapter~12]{libroAntonio}. After introducing the notion of a complemented inner ideal, the following  key result is stated: every abelian inner ideal $B$ of finite length of a non-degenerate Lie algebra $L$ yields a finite $\mathbb Z$-grading $L=L_{-n}\oplus \dots\oplus L_n$ such that $B=L_n$ so that in particular $B$ is a complemented inner ideal. In fact, a Lie algebra is complemented (meaning that every inner ideal is so)  if and only if it is a direct sum of simple non-degenerate Artinian Lie algebras.  Note that the algebras considered here not only have arbitrary dimension,    they can even be  considered over a ring of scalars with very few restrictions.

It is well known the rich interplay of inner ideals with ad-nilpotent elements. Using inner ideals, some generalizations of Kostrikin's lemma have been achieved in \cite{Miguel2009}, which finds ad-nilpotent elements of index 3 from ad-nilpotent elements of greater index. These kind of elements   play a fundamental role in the classification of simple modular Lie algebras. They have strong implications on the structure of the algebra.
For instance, for algebraically closed fields (of characteristic greater than 5) the existence of an ad-nilpotent element implies that the finite-dimensional non-degenerate simple Lie algebra is necessarily classical \cite{transactions77}. Thus we have 
 criterion for distinguishing some concrete Lie algebras from other ones using inner ideals or ad-nilpotent elements.  
 Similarly 
 \cite{Baranov2013} obtained a new characterization of infinite locally finite simple diagonal Lie algebras (characteristic zero)  in terms of inner ideals. 
 \smallskip

Apart from this algebraic line of study of inner ideals, from which we have only included    a small sample of the contributions,  
 there is a second line of study,  focused on the more geometric Faulkner's approach, which follows being topical.  
It is based on incidence geometries. The story begins with Tits, who introduced  buildings  in \cite{Tits74}   in order to study algebraic groups. Some point-line spaces were associated to these buildings, called root shadows \cite[Definition~3]{Cohen2006}. Many of these ones coincide with the following point-line spaces associated to Lie algebras: the so called   \emph{extremal geometry} of a Lie algebra $L$ has as points the points in the projective space $P(L)$ spanned by extremal elements in $L$, and as lines  the lines in $P(L)$ such that all their points are  spanned by extremal elements. 
In  the case that $L$ is finite-dimensional, simple,   and generated (as an algebra) by pure extremal elements, if the set of lines is non-empty, then the extremal geometry  is 
 a  root shadow space (for instance see \cite{Cohen2007}).
Very recently \cite{Cohen21} 
extends Faulkner's results showing that the correspondence  between inner ideals of the Lie algebra of a simple algebraic group  and  shadows  on  the  set  of  long  root  groups  of  the  building  associated  with  the  algebraic group  holds for fields of characteristic different from  two too. 

This raises the question whether one can recover the Lie algebra  from its extremal geometry. Is the algebra characterized by its geometry? A work in this direction is 
 \cite{Cuypers2018}, which proves that the simple Lie algebra   is  uniquely determined (up to
isomorphism)  when the  extremal geometry  is the root shadow space of a spherical building of rank at least 3. (This requires that the algebra is generated by its set of extremal elements, but, for characteristic distinct from 2 and from 3 and finite dimension, it is enough  that it contains an extremal element that is not a sandwich \cite{Cohen2008}).

Other authors try to   focus on direct connections between Lie algebras and buildings, without the intermediate step of considering algebraic groups, see for instance,  \cite{Medts2020,Cuypers2020}. The idea is to use structurable algebras, a class of non-associative algebras with involution generalizing Jordan algebras, introduced in \cite{Allison1979}, which is usually  employed to construct 5-graded Lie algebras via
the Tits-Kantor-Koecher construction. So, Moufang polygons are constructed in  \cite{Medts2020}   using inner ideals of Lie algebras obtained from structurable algebras via such construction. 
The considered geometry in \cite{ Cuypers2020} is not the extremal  one but a generalization called   the \emph{inner ideal geometry} of a Lie algebra, where 
  the points are the 1-dimensional inner ideals but the lines are the minimal proper inner ideals containing at least two points. It turns out to be   a Moufang spherical building of rank at least $2$, or  a Moufang set in case there are no lines.  \smallskip

The main purpose in this paper is to deal with the case of the real algebras. Note that a great amount of the mentioned works  aim to avoid restrictions on the characteristic of the field. So, in a sense the real case is one of the easiest cases, after the complex one.  Concretely, real simple finite-dimensional Lie algebras are completely classified and they are very  well-known. But on the other hand, they are very important cases, that are worth a specific study. We cannot forget their fundamental role in Differential Geometry and in Physics, which not even needs references.    We concentrate on two objectives. First, in this work, on the classification up to isomorphism of the inner ideals of the real simple finite-dimensional Lie algebras, which is achieved in Theorem~\ref{mainth}, and  is followed by a case by case description in \cref{section_Explicit classification}. Second, in a  forthcoming  paper, what  are we able to say about the incidence geometries related to these inner ideals?   
To study the real case will allow us to gain intuition about the inner ideal geometries, often difficult to   visualize.  Besides, our study will provide some concrete realizations in \cref{corolarioexc} of the exceptional real Lie algebras obtained from structurable algebras related to composition algebras, joint with other realizations focused on finding point line spaces in \cref{sec_ps}.

Our first tool  to address the classification is our knowledge  on the classification of inner ideals in the complex case. 
Both cases, real and complex,  are obviously related, since an inner ideal of a real Lie algebra will be, after complexification, an inner ideal of the complexified algebra.  Then we use the complete description of the   abelian inner ideals of any simple Lie algebra $L$ over $\CC$   obtained in \cite{Draper2012} (of course it can   also be consulted in \cite{libroAntonio}, which collects many results).
Namely, 
take $H$ a Cartan subalgebra of $L$, and fix $\Delta=\{\alpha_1,\dots,\alpha_l\}$ a basis of simple roots of the root system $\Phi$ relative to $H$.
Denote by $\tilde\alpha=\sum_{i=1}^lm_i\alpha_i$ the maximal root in  $\Phi$. 
Then, for any  non-zero abelian inner ideal $B$ of $L$, there is an automorphism $\varphi\in\Aut(L)$ and a subset $I\subseteq\{1,\dots,l\}$ such that $\varphi(B)=B_I$, where,
 $$
B_I:=\bigoplus_{\alpha\in \Phi}\{L_\alpha  :
\alpha=\sum_{1\le i\le l} p_i \alpha_i\hbox{ with } p_j=m_j
\hbox{ for all } j\in I\}.
$$

In order  to relate the inner ideals of the real Lie algebras with the inner ideals of the complex ones, we will use the Satake diagrams of the corresponding real forms. We will obtain a similar result than in the complex case, that is, that every abelian inner ideal is conjugated to some $\mathcal{B}_I$, but only for $I$ a subset of indices 
  \emph{adapted to the Satake diagram}  (Definition~\ref{de_adapted}).
 In some sense, it can be considered as a generalization of   \cite{RealesZgrads}, which proves that there is a bijection between isomorphism classes of real semisimple $\mathbb Z$-graded Lie algebras and weighted (with certain restrictions) Satake diagrams. \smallskip

The structure of the paper is as follows. Preliminaries are recalled in  \cref{se_prelim}: the Satake diagrams of the simple real Lie algebras
and the classification of the  inner ideals of the (also simple) complex Lie algebras. The main result  in the real case is \cref{mainth} in Section~\ref{se_mainresult}. This theorem is easily applied in Section~\ref{section_Explicit classification} to obtain a detailed classification of the inner ideals of the real Lie algebras up to automorphism. This turns to be a combinatorial description, so
we have added Section~\ref{se_moreonexceptional}, which tries to describe the inner ideals of the exceptional Lie algebras without using roots, but in terms of some constructions of these algebras. To be precise, we use the Tits-Kantor-Koecher construction applied to the tensor product of composition algebras and to Albert algebras, as well as some nice models of the split exceptional algebras based entirely on linear and multilinear algebra, which allow us to obtain some remarkable inner ideals formed completely by extremal elements.
As a byproduct, all the  non-compact exceptional Lie algebras other than those of type   $G_2$ are constructed by TKK-construction of the tensor product of real composition algebras in  \cref{corolarioexc}.
\smallskip

 \textbf{Acknowledgement} The second author would like to thank the first author for her hospitality during a research visit to Malaga in October-November 2019. Also,  the first author thanks Alberto Elduque for fruitful conversations on incidence geometries during a research visit to Zaragoza in October 2021, and Antonio Fern\'andez for his suggestions on Jordan pairs.

%%%%%%%%%%%%%%%%%%%%%%%%%%%%%%%%%%%%%%%%%%%%%%%%%%%%%%%%
%%%%%%%%%%%%%%%%%%%%%%%%%%%%%%%%%%%%%%%%%%%%%%%%%%%%%%%%
%%%%%%%%%%%%%%%%%%%%%%%%%%%%%%%%%%%%%%%%%%%%%%%%%%%%%%%%

\section{Preliminaries}\label{se_prelim}

All the algebras considered throughout this paper are finite-dimensional (over a field).
%%%%%%%%%%%%%%%%%%%%%%%%%%%%%%%%%%%%%%%%%%%%%%%%%%%%%%%%

\subsection{Background on Satake diagrams}\label{se_Satake}

We recall some very well known facts on Satake diagrams in order to fix notation, mainly extracted from \cite[Chapter III, \S7]{Helgason2001} and from the summary in \cite{DraperFontanals2016}.

Let $\mathfrak{g}$ be a simple Lie algebra over $\mathbb{R}$ and $\kappa\colon \mathfrak{g}\times\mathfrak{g}\to\mathbb{R}$ its Killing form.

Take any maximal abelian subspace $\mathfrak{a}$ such that $\kappa\vert_{\mathfrak{a}\times\mathfrak{a}}$ is positive definite. (So there exists a Cartan involution $\theta\colon\mathfrak{g}\to\mathfrak{g}$ such that $\theta\vert_{\mathfrak{a}}=-\id_{\mathfrak{a}}$.)
For each $\lambda$ in the dual space $\mathfrak{ a}^*$ of $\mathfrak{ a}$, let
$\mathfrak{ g}_{\lambda}=\{x\in\mathfrak{ g}:[h,x]=\lambda(h)x , \forall h\in\mathfrak{ a}\}$. Then $\lambda$
is called a \emph{restricted root} if $\lambda\ne0$ and $\mathfrak{ g}_{\lambda}\ne0$. 
Denote by $\Sigma$ the set of restricted roots, which is an abstract root system (not necessarily reduced), and by $m_\lambda=\dim \mathfrak{g}_\lambda$ the \emph{multiplicity} of the restricted root.
Note that the simultaneous diagonalization of $\ad_\mathfrak{ g}\mathfrak{ a}$ gives the decomposition $\mathfrak{ g}=\mathfrak{g}_0\oplus\big(\displaystyle\oplus_{\lambda\in\Sigma}\mathfrak{g}_\lambda\big)$, for $\mathfrak{g}_0=\mathfrak{a}\oplus \mathop{\rm Cent}_{\mathfrak{t}}(\mathfrak{a})$, with $\mathfrak{t}=\mathrm{Fix}(\theta)$.

Now take $\mathfrak{h}$  any maximal abelian subalgebra of $\mathfrak{g}$ containing $\mathfrak{a}$. Then $\mathfrak{h}$ is a Cartan subalgebra of $\mathfrak{g}$ (that is, $\mathfrak{h}^\mathbb{C}$ is a Cartan subalgebra of $\mathfrak{g}^\mathbb{C}$). 
Denote by $\Phi$   the root system of $\mathfrak{g}^\mathbb{C}$ relative to $\mathfrak{h}^\mathbb{C}$ and by $\mathfrak{g}^\CC_\alpha$ the one-dimensional root space for any $\alpha\in \Phi$.
If $\alpha\in \Phi$, denote by $\bar\alpha:=\alpha\vert_{\mathfrak{a}}\colon\mathfrak{a}\to\mathbb{R}$. 
The roots  in $\Phi_0=\{\alpha\in\Phi:\bar \alpha=0\}$ are called the \emph{compact} roots and those in $\Phi\setminus\Phi_0$ the \emph{non-compact} roots. 
Note that $\alpha\in\Phi_0$ if and only if $\alpha(\mathfrak{h})\subseteq\ii\mathbb{R}$.  
The restricted roots are exactly the  non-zero restrictions of roots to $\mathfrak{a}\subseteq \mathfrak{h}^\mathbb{C} $, that is,
  $\Sigma=\{\bar \alpha:\alpha\in\Phi\setminus\Phi_0\}$. 
Moreover, for any $\lambda\in\Sigma$,
 \begin{equation}\label{eq_espaciosraices}
  \mathfrak{g}_\lambda=(\oplus\{\mathfrak{g}^\CC_\alpha:\bar\alpha=\lambda\})\cap \mathfrak{g},
\end{equation}
where we understand $\mathfrak{g}$ to be naturally contained in $\mathfrak{g}^\CC$;
and $m_\lambda$ coincides with the number of roots $\alpha\in\Phi$ satisfying $\bar\alpha=\lambda$.
Moreover,  it is possible to choose a basis $\Delta$  of the root system $\Phi$ in such a way that
$\Delta_0={\Delta} \cap {\Phi_0}=\{\alpha\in \Delta:\bar\alpha=0\}$ is a basis of    $\Phi_0$, also a root system.

The Satake diagram of the real Lie algebra $\mathfrak{g}$ is defined as follows.
In the Dynkin diagram associated to such basis $\Delta$, the roots in $\Delta_0$ are denoted by a black circle $\bullet$ and the roots in $\Delta\setminus \Delta_0$ are denoted by a white circle $\circ$.
If $\alpha,\beta\in \Delta\setminus \Delta_0$ are such that $\bar \alpha=\bar \beta$, then $\alpha$ and $\beta$ are joined by a curved arrow.  The \emph{real rank} of $\g$ is defined as $\dim\mathfrak a$, which coincides the number of white nodes in the Satake diagram minus the number of arrows.

\begin{remark}
 If $\mathfrak{g}$ is compact, it turns out that $\mathfrak{a}=0$, so $\Phi_0=\Phi$ and necessarily
 its Satake diagram is   the Dynkin diagram (of $\mathfrak{g}^\mathbb{C}$)   with all the nodes colored in black. 
 If $\mathfrak{g}$ is split, it turns out that $\mathfrak{a}$ is a Cartan subalgebra ($\mathfrak{a}=\mathfrak{h}$), so  $\Phi_0=\emptyset$ and necessarily
 its Satake diagram is   the Dynkin diagram (of $\mathfrak{g}^\mathbb{C}$)   with all the nodes in white. 
 \end{remark}

\subsection{Satake diagrams of the simple real Lie algebras}\label{se_losdiagramasconcretos}
We recall the classification in order to unify notation and labellings. Here $I_n$ denotes the identity matrix, $I_{p,q}:=\mathrm{diag}(I_p,-I_q)$ and $J:=\tiny\begin{pmatrix}0&I_n\\-I_n&0\end{pmatrix}$. \ \smallskip

 $\bullet$ \textbf {Special type.}
  The real forms of the special linear algebra $\mathfrak{sl}_{n+1}(\CC)$ of the traceless matrices are 
  \begin{itemize}
   \item[$\circ$] the split Lie algebra, $\mathfrak{sl}_{n+1}(\RR)$, of real rank $n$;  
    \item[$\circ$] $\mathfrak{sl}_m(\mathbb H)=\{x\in\mathfrak{gl}_{m}(\mathbb H):\textrm{Re}(\tr(x))=0\}$,  only for odd $n=2m-1>1$, 
   with real rank $m-1$ and Satake diagram \vspace{-10pt}
   \begin{center}{ 
\begin{picture}(23,5)(4,-0.5)  
\put(5,0){\circle*{1}} \put(9,0){\circle{1}} \put(13,0){\circle*{1}}
    \put(25,0){\circle{1}}\put(29,0){\circle*{1}}
\put(5.5,0){\line(1,0){3}}
\put(9.5,0){\line(1,0){3}} \put(13.5,0){\line(1,0){3}}  
\put(17.5,0){\circle{0.1}} \put(18.5,0){\circle{0.1}} \put(19.5,0){\circle{0.1}} \put(20.5,0){\circle{0.1}} 
\put(21.5,0){\line(1,0){3}}
\put(25.5,0){\line(1,0){3}} 
\end{picture}
}\end{center}\vspace{6pt}

  \item[$\circ$]   $\mathfrak{su}_{p,q}=\{x\in\mathfrak{sl}_{n+1}(\CC):I_{p,q}x+\bar x^t I_{p,q}=0\}$, with $p+q=n+1$, $p\le q$,
  with  real rank $p$ and  Satake diagram \vspace{-10pt}
  \begin{center}{
\begin{picture}(28,5)(4,-0.5)  
 \put(9,0){\circle{1}} \put(13,0){\circle{1}}\put(25,0){\circle{1}}  
 \put(8.7,1.7){$\scriptstyle _1$}  \put(24.2,1.7){$\scriptstyle _{p-1}$}  
   \put(29.6,-3){\circle{1}}  \put(9,-6){\circle{1}} \put(13,-6){\circle{1}}\put(25,-6){\circle{1}}  
 \put(9.5,0){\line(1,0){3}} \put(13.5,0){\line(1,0){3}} \put(17.5,0){\circle{0.1}} \put(18.5,0){\circle{0.1}} \put(19.5,0){\circle{0.1}} \put(20.5,0){\circle{0.1}} \put(21.5,0){\line(1,0){3}} \put(25.5,0){\line(2,-1.2){4}} 
\put(25.5,-6){\line(2,1.2){4}}%%linea torcida
 \put(9.5,-6){\line(1,0){3}} \put(13.5,-6){\line(1,0){3}} \put(17.5,-6){\circle{0.1}} \put(18.5,-6){\circle{0.1}} \put(19.5,-6){\circle{0.1}} \put(20.5,-6){\circle{0.1}} \put(21.5,-6){\line(1,0){3}} 
\put(9,-0.75){\line(0,-1){4.5}} \put(9,-1.5){\vector(0,1){1}}\put(9,-4.5){\vector(0,-1){1}} %primera vertical
\put(13,-0.75){\line(0,-1){4.5}} \put(13,-1.5){\vector(0,1){1}}\put(13,-4.5){\vector(0,-1){1}}
\put(25,-0.75){\line(0,-1){4.5}} \put(25,-1.5){\vector(0,1){1}}\put(25,-4.5){\vector(0,-1){1}}
\end{picture}
%%%%%%%%%%%%%%%%%%%%%%%%
\qquad
%%%%%%%%%%%%%%%%
\begin{picture}(23,5)(4,-1.9)  
\put(8.7,1.7){$\scriptstyle _1$}  \put(24.5,1.7){$\scriptstyle _{p}$}  
 \put(9,0){\circle{1}} \put(13,0){\circle{1}}\put(25,0){\circle{1}} \put(29,0){\circle*{1}}\put(29,-2.5){\circle*{1}}\put(29,-5.5){\circle*{1}}\put(29,-4.5){\circle{0.1}}\put(29,-3.5){\circle{0.1}}\put(29,-4){\circle{0.1}}
 \put(9,-8){\circle{1}} \put(13,-8){\circle{1}}\put(25,-8){\circle{1}}  
 \put(9.5,0){\line(1,0){3}} \put(13.5,0){\line(1,0){3}} \put(17.5,0){\circle{0.1}} \put(18.5,0){\circle{0.1}} \put(19.5,0){\circle{0.1}} \put(20.5,0){\circle{0.1}} \put(21.5,0){\line(1,0){3}}\put(25.5,0){\line(1,0){3}}  \put(29,0){\line(0,-1){2}} 
\put(29,-8){\line(0,1){2}} 
 \put(9.5,-8){\line(1,0){3}} \put(13.5,-8){\line(1,0){3}} \put(17.5,-8){\circle{0.1}} \put(18.5,-8){\circle{0.1}} \put(19.5,-8){\circle{0.1}} \put(20.5,-8){\circle{0.1}} \put(21.5,-8){\line(1,0){3}} \put(25.5,-8){\line(1,0){3}} \put(29,-8){\circle*{1}} 
\put(9,-0.75){\line(0,-1){6.5}} \put(9,-1.5){\vector(0,1){1}}\put(9,-6.5){\vector(0,-1){1}}  
\put(13,-0.75){\line(0,-1){6.5}} \put(13,-1.5){\vector(0,1){1}}\put(13,-6.5){\vector(0,-1){1}}
\put(25,-0.75){\line(0,-1){6.5}} \put(25,-1.5){\vector(0,1){1}}\put(25,-6.5){\vector(0,-1){1}}
\end{picture}
}\end{center}\vspace{26pt}
if $p=q$  
and $p<q$ respectively.
Here we assume $p\ge1$, for $p=0$ the algebra  $\mathfrak{su}_{0,n+1}\equiv \mathfrak{su}_{n+1}$ is the  compact one.

  \end{itemize}

  $\bullet$  \textbf {Orthogonal type.}
   The real forms of  the orthogonal algebras $ \mathfrak{so}_{2n+1}(\CC)$ and $\mathfrak{so}_{2n}(\CC) $ of skew-symmetric matrices are
   \begin{itemize}
    \item[$\circ$]  $\mathfrak{so}_{p,q}(\RR)\equiv \mathfrak{so}_{p,q}=\{x\in\mathfrak{gl}_{p+q}(\mathbb R):I_{p,q}x+ x^t I_{p,q}=0\}$, with $p\le q$. If $p+q=2n+1$ (type $B_n$), the Satake diagram is:  
    \begin{center}{ 
\begin{picture}(29,5)(4,-2.5)  
\put(5,0){\circle{1}}  \put(17,0){\circle{1}}\put(21,0){\circle*{1}}\put(33,0){\circle*{1}}\put(37,0){\circle*{1}}
\put(4.7,-2){$\scriptstyle _1$}  \put(16.5,-2){$\scriptstyle _{p}$} 

\put(25.5,0){\circle{0.1}} \put(26.5,0){\circle{0.1}} \put(27.5,0){\circle{0.1}} \put(28.5,0){\circle{0.1}} 
\put(9.5,0){\circle{0.1}} \put(10.5,0){\circle{0.1}} \put(11.5,0){\circle{0.1}} \put(12.5,0){\circle{0.1}} 
   \put(5.5,0){\line(1,0){3}}
\put(13.5,0){\line(1,0){3}} \put(17.5,0){\line(1,0){3}} \put(21.5,0){\line(1,0){3}} 
\put(29.5,0){\line(1,0){3}}
\put(33.5,0.15){\line(1,0){3}}\put(33.5,-0.15){\line(1,0){3}}
\put(34.7,0.8){\line(1,-1){0.8} } 
\put(34.7,-0.8){\line(1,1){0.8} }
\end{picture}
}\end{center}  
Here there are $0\le p\le n$ white nodes ($p$ is the  real rank).  For $p=n$ we have the split real form, and if $p=0$ the compact one.
      
    If $p+q=2n$ (type $D_n$), the Satake diagrams are
     \begin{center}{     
     \begin{picture}(40,5)(8,-0.5)  
      \put(4.7,-2){$\scriptstyle _1$}  \put(16.5,-2){$\scriptstyle _{p}$}  
\put(5,0){\circle{1}}  \put(17,0){\circle{1}}\put(21,0){\circle*{1}}\put(33,0){\circle*{1}}
\put(36.8,2){\circle*{1}}\put(36.8,-2){\circle*{1}}
\put(25.5,0){\circle{0.1}} \put(26.5,0){\circle{0.1}} \put(27.5,0){\circle{0.1}} \put(28.5,0){\circle{0.1}} 
\put(9.5,0){\circle{0.1}} \put(10.5,0){\circle{0.1}} \put(11.5,0){\circle{0.1}} \put(12.5,0){\circle{0.1}} 
   \put(5.5,0){\line(1,0){3}}
\put(13.5,0){\line(1,0){3}} \put(17.5,0){\line(1,0){3}} \put(21.5,0){\line(1,0){3}} 
\put(29.5,0){\line(1,0){3}}
\put(33.3,0){\line(2,1.2){3}} \put(33.3,0){\line(2,-1.2){3}} 
\end{picture}
\begin{picture}(20,5)(4,-0.5)  
\put(4.7,-2){$\scriptstyle _1$}  \put(37.85,2.2){$\scriptstyle _{p}$} \put(37.85,-2.2){$\scriptstyle _{n}$}  
\put(5,0){\circle{1}}  \put(17,0){\circle{1}}\put(21,0){\circle{1}}\put(33,0){\circle{1}}
\put(37,2.2){\circle{1}}\put(37,-2.2){\circle{1}} 
\put(25.5,0){\circle{0.1}} \put(26.5,0){\circle{0.1}} \put(27.5,0){\circle{0.1}} \put(28.5,0){\circle{0.1}} 
\put(9.5,0){\circle{0.1}} \put(10.5,0){\circle{0.1}} \put(11.5,0){\circle{0.1}} \put(12.5,0){\circle{0.1}} 
   \put(5.5,0){\line(1,0){3}}
\put(13.5,0){\line(1,0){3}} \put(17.5,0){\line(1,0){3}} \put(21.5,0){\line(1,0){3}} 
\put(29.5,0){\line(1,0){3}}
\put(33.5,0){\line(2,1.3){3}} \put(33.5,0){\line(2,-1.3){3}}  
\put(37,1.8){\line(0,-1){3.5}} \put(37,-1){\vector(0,-1){0.8}}\put(37,1){\vector(0,1){0.8}} 
\end{picture}
}\end{center}\vskip0.4cm  
if $p\le n-2$ and $p=n-1$  
 respectively, and again the real rank is $p$. For $p=n$ we have the split real form, and if $p=0$ the compact one.

  \item[$\circ$]  $\mathfrak{so}^*_{2n}(\RR)\equiv \mathfrak{u}^*_{n}(\mathbb H)=\{x\in\mathfrak{gl}_{n}(\mathbb H):x^th+ h\bar x=0\}$, where $h=\mathrm{diag}(\ii,\dots,\ii)=\ii I_n$.
  This case  (type $D_n$) only happens when $n>4$. The Satake diagrams are     
  \begin{center}{  
\begin{picture}(40,5)(10,-0.5)  
\put(12.5,0){\circle*{1}}  \put(16.5,0){\circle{1}}\put(20.5,0){\circle*{1}}\put(24.5,0){\circle{1}}\put(29,0){\circle*{1}}\put(33,0){\circle{1}}
\put(37,2.2){\circle*{1}}\put(37,-2.2){\circle{1}} 
\put(25.7,0){\circle{0.1}} \put(26.7,0){\circle{0.1}} \put(27.7,0){\circle{0.1}}  
\put(13,0){\line(1,0){3}} \put(17,0){\line(1,0){3}} \put(21,0){\line(1,0){3}} 
\put(29.5,0){\line(1,0){3}}
\put(33.5,0){\line(2,1.3){3}} \put(33.5,0){\line(2,-1.3){3}}  
\end{picture}
\begin{picture}(20,5)(8,-0.5)  
\put(13,0){\circle*{1}}  \put(17,0){\circle{1}}\put(21,0){\circle*{1}}\put(29,0){\circle{1}}\put(33,0){\circle*{1}}
\put(37,2.2){\circle{1}}\put(37,-2.2){\circle{1}}  
\put(25.5,0){\circle{0.1}} \put(26.5,0){\circle{0.1}} \put(27.5,0){\circle{0.1}}  
\put(13.5,0){\line(1,0){3}} \put(17.5,0){\line(1,0){3}} \put(21.5,0){\line(1,0){3}} 
\put(29.5,0){\line(1,0){3}}
\put(33.5,0){\line(2,1.3){3}} \put(33.5,0){\line(2,-1.3){3}}  
\put(37,1.8){\line(0,-1){3.5}} \put(37,-1){\vector(0,-1){0.8}}\put(37,1){\vector(0,1){0.8}} 
\end{picture}
}\end{center}\vskip0.4cm  
if $n$ is even and odd, respectively. (The real rank is the integer part of $n/2$.)

    \end{itemize}

    $\bullet$  \textbf {Symplectic type.}
    The real forms of  $ \mathfrak{sp}_{2n}(\CC)$ are     
    \begin{itemize} 
   \item[$\circ$]   The split Lie algebra $\mathfrak{sp}_{2n}(\RR)=\{x\in\mathfrak{gl}_{2n}(\mathbb R): Jx+  x^tJ=0\}$, of real rank $n$;  
    \item[$\circ$]    $\mathfrak{sp}_{p,q}(\mathbb H)\equiv \mathfrak{sp}_{p,q}=\{x\in\mathfrak{gl}_{n}(\mathbb H):I_{p,q}x+\bar x^t I_{p,q}=0\}$, for $p\le q $   and $p+q=n$, 
    has real rank   $p$ and its Satake diagram is
    \begin{center}{ 
\begin{picture}(45,5)(4,-1.5)  
   \put(17,0){\circle{1}}\put(21,0){\circle*{1}}\put(33,0){\circle*{1}}\put(37,0){\circle*{1}}  
   \put(3.7,-2){$\scriptstyle _1$}  \put(16.5,-2){$\scriptstyle _{2p}$} \put(36.5,-2){$\scriptstyle _{n}$}  
\put(25,0){\circle*{1}} 
\put(26.5,0){\circle{0.1}} \put(27.5,0){\circle{0.1}} \put(28.5,0){\circle{0.1}} 
\put(4,0){\circle*{1}}\put(8,0){\circle{1}}\put(12,0){\circle*{1}}  
\put(13.5,0){\circle{0.1}} \put(14.5,0){\circle{0.1}} \put(15.5,0){\circle{0.1}} 
   \put(4.5,0){\line(1,0){3}} \put(8.5,0){\line(1,0){3}}
 \put(17.5,0){\line(1,0){3}} \put(21.5,0){\line(1,0){3}} 
\put(29.5,0){\line(1,0){3}}
\put(33.5,0.15){\line(1,0){3}}\put(33.5,-0.15){\line(1,0){3}}
\put(34.7,0){\line(1,1){0.8} } 
\put(34.7,-0){\line(1,-1){0.8} }
\end{picture}
\begin{picture}(30,5)(4,-1.5)  
   \put(13,0){\circle*{1}} \put(17,0){\circle{1}}\put(21,0){\circle*{1}}\put(33,0){\circle*{1}}\put(37,0){\circle{1}}\put(29,0){\circle{1}}
  \put(26.5,0){\circle{0.1}} \put(27.5,0){\circle{0.1}} \put(25.5,0){\circle{0.1}} 
  \put(12.7,-2){$\scriptstyle _1$}  \put(36.7,-2){$\scriptstyle _{2p}$}
   \put(13.5,0){\line(1,0){3}}
 \put(17.5,0){\line(1,0){3}} \put(21.5,0){\line(1,0){3}} 
\put(29.5,0){\line(1,0){3}}
\put(33.5,0.15){\line(1,0){3}}\put(33.5,-0.15){\line(1,0){3}}
\put(34.7,0){\line(1,1){0.8} } 
\put(34.7,-0){\line(1,-1){0.8} }
\end{picture}
}\end{center}\vskip0.4cm
if $1\le p<q$ or $p=q$, respectively ($p=0$ is the compact one).
  \end{itemize}

$\bullet$  \textbf {Exceptional type.}
The real forms of the exceptional complex Lie algebras, apart from the split and compact cases, have the next Satake diagrams

\begin{center}     
\begin{tabular}{lll}
$\mathfrak{e}_{6,2}$ :\quad \begin{picture}(23,5)(4,-0.5)
\put(5,0){\circle{1}} \put(9,0){\circle{1}} \put(13,0){\circle{1}}
\put(17,0){\circle{1}} \put(21,0){\circle{1}}
\put(13,4){\circle{1}} 
\put(5.5,0){\line(1,0){3}}
\put(9.5,0){\line(1,0){3}} \put(13.5,0){\line(1,0){3}}  
\put(17.5,0){\line(1,0){3}}
\put(13,0.5){\line(0,1){3}}
\cbezier (6,-1)(11,-2.5)(15,-2.5)(20,-1)  \put(20,-1){\vector(2,1){1}}
 \put(6,-1){\vector(-2,1){1}}
 \cbezier (10.2,-1)(12.5,-1.5)(14.5,-1.5)(16.0,-1) 
 \put(16.0,-1){\vector(2,1){1}}
 \put(10.2,-1){\vector(-2,1){1}}
 \end{picture}&
$\mathfrak{e}_{6,-14}$ :\quad \begin{picture}(23,5)(4,-0.5)
\put(5,0){\circle{1}} \put(9,0){\circle*{1}} \put(13,0){\circle*{1}}
\put(17,0){\circle*{1}} \put(21,0){\circle{1}}
\put(13,4){\circle{1}}
\put(5.5,0){\line(1,0){3}}
\put(9.5,0){\line(1,0){3}} \put(13.5,0){\line(1,0){3}}  
\put(17.5,0){\line(1,0){3}}
\put(13,0.5){\line(0,1){3}}
\cbezier (6,-1)(11,-2.5)(15,-2.5)(20,-1) 
 \put(20,-1){\vector(2,1){1}}
 \put(6,-1){\vector(-2,1){1}}
\end{picture}&
$\mathfrak{e}_{6,-26}$ :\quad \begin{picture}(23,5)(4,-0.5)  
\put(5,0){\circle{1}}\put(4.7,-2){$\scriptstyle _{1}$}
 \put(9,0){\circle*{1}}\put(8.7,-2){$\scriptstyle _{3}$}
  \put(13,0){\circle*{1}}\put(16.7,-2){$\scriptstyle _{5}$}\put(20.7,-2){$\scriptstyle _{6}$}
  \put(12.7,-2){$\scriptstyle _{4}$}
\put(17,0){\circle*{1}} 
\put(21,0){\circle{1}}  
\put(13,4){\circle*{1}} \put(11.4,3.8){$\scriptstyle _{2}$}
\put(5.5,0){\line(1,0){3}}
\put(9.5,0){\line(1,0){3}} \put(13.5,0){\line(1,0){3}}  
\put(17.5,0){\line(1,0){3}} 
\put(13,0.5){\line(0,1){3}}
\end{picture} 
\\
\end{tabular}\end{center}\vspace{5pt}

\begin{center}   
\begin{tabular}{ll}
$\mathfrak{e}_{7,5} $ :\quad \begin{picture}(23,5)(4,-0.5)  
\put(5,0){\circle{1}} \put(9,0){\circle{1}} \put(13,0){\circle{1}}
\put(17,0){\circle*{1}} \put(21,0){\circle{1}}\put(25,0){\circle*{1}} 
\put(13,4){\circle*{1}}
\put(5.5,0){\line(1,0){3}}
\put(9.5,0){\line(1,0){3}} \put(13.5,0){\line(1,0){3}}  
\put(17.5,0){\line(1,0){3}} 
\put(21.5,0){\line(1,0){3}}
\put(13,0.5){\line(0,1){3}}
\end{picture}\hspace{30pt}\ 
&
$\mathfrak{e}_{7,-25} $:\quad \begin{picture}(23,5)(4,-0.5)  
\put(5,0){\circle{1}} \put(9,0){\circle*{1}} \put(13,0){\circle*{1}}
\put(17,0){\circle*{1}} \put(21,0){\circle{1}}\put(25,0){\circle{1}} 
\put(13,4){\circle*{1}} \put(11.4,3.8){$\scriptstyle _{2}$}
\put(5.5,0){\line(1,0){3}}
\put(9.5,0){\line(1,0){3}} \put(13.5,0){\line(1,0){3}}  
\put(17.5,0){\line(1,0){3}} 
\put(21.5,0){\line(1,0){3}}
\put(13,0.5){\line(0,1){3}}
\put(4.7,-2){$\scriptstyle _{1}$}\put(20.7,-2){$\scriptstyle _{6}$}\put(24.7,-2){$\scriptstyle _{7}$}
\end{picture}\vspace{8pt}
\\
$\mathfrak{e}_{8,-24}:\quad$  
\begin{picture}(23,5)(4,-0.5)  
\put(5,0){\circle{1}} \put(9,0){\circle*{1}} \put(13,0){\circle*{1}}
\put(17,0){\circle*{1}} \put(21,0){\circle{1}}\put(25,0){\circle{1}}\put(29,0){\circle{1}}
\put(13,4){\circle*{1}}
\put(5.5,0){\line(1,0){3}}
\put(9.5,0){\line(1,0){3}} \put(13.5,0){\line(1,0){3}}  
\put(17.5,0){\line(1,0){3}} 
\put(21.5,0){\line(1,0){3}}
\put(25.5,0){\line(1,0){3}} 
\put(13,0.5){\line(0,1){3}}
\put(4.7,-2){$\scriptstyle _{1}$}\put(28.7,-2){$\scriptstyle _{8}$}\put(24.7,-2){$\scriptstyle _{7}$}\put(11.4,3.8){$\scriptstyle _{2}$}
\end{picture}\hspace{40pt}\ 
&
 
$\mathfrak{f}_{4,-20}:$  
\quad\begin{picture}(23,5)(4,-0.5)  
\put(5,0){\circle*{1}} \put(9,0){\circle*{1}} \put(13,0){\circle*{1}}
\put(17,0){\circle{1}}  
\put(5.5,0){\line(1,0){3}}
\put(9.5,0.15){\line(1,0){3}} \put(13.5,0){\line(1,0){3}} 
\put(10.7,0.8){\line(1,-1){0.8} } 
\put(10.7,-0.8){\line(1,1){0.8} }
\put(9.5,-0.15){\line(1,0){3}}
\put(4.7,-2){$\scriptstyle _{1}$}\put(8.7,-2){$\scriptstyle _{2}$}\put(12.7,-2){$\scriptstyle _{3}$}\put(16.7,-2){$\scriptstyle _{4}$}
\end{picture}\\
\end{tabular}\end{center}\vskip0.4cm
Here the second subindex, after the rank,  indicates the signature of the Killing form.
The real ranks are $4$, $2$ and $2$, for the real forms of $E_6$-type (respectively), $4$ and $3$, for the real forms of $E_7$-type, $\mathfrak{e}_{8,-24}$ has real rank $4$ and $\mathfrak{f}_{4,-20}$ equal to $1$. These real  ranks will be related with the maximal length of a chain of proper inner ideals as well as with the rank of the incidence geometries related to their inner ideals.

%%%%%%%%%%%%%%%%%%%%%%%%%%%%%%%%%%%%%%%%%%%%%%%%%%%%%%%%
 
 \subsection{Inner ideals of complex simple Lie algebras}\label{se_background_inner}
 
 Usually the next definitions are considered over arbitrary fields, in spite that in this work we are mainly interested in the real field as an aim and in the complex field as a tool.

 \begin{definition}
\label{def_inner ideal}
A vector subspace $B$ of a Lie algebra $L$ is called an \textit{inner ideal} if $[B,[B,L]]\subseteq B$.
An inner ideal $B$   is called \textit{proper} if $B$ is neither $\{0\}$ nor $L$.
Such $B$   is called a \textit{minimal inner ideal} if it does not contain properly any  non-zero inner ideal.
\end{definition}

According to \cite[Lemma 1.13]{transactions77}, every proper inner ideal of a non-degenerate  simple finite-dimensional Lie algebra over a
field  of characteristic $0$ is necessarily abelian. The elements in an abelian inner ideal are ad-nilpotent, since 
  $[e,[e,[e,L]]]\subseteq[e,B]\subseteq[B,B]=0$ for each $0\ne e\in B$. Moreover, recall a well-known but key result.
  
  \begin{lemma}\label{le_lodeladescomposicion}
  Let $B$ be a proper inner ideal   of a simple (finite-dimensional) Lie algebra $L$ over a field of zero characteristic.   Every  $0\ne e\in B$ is ad-nilpotent of index 3 and 
  there exist $f$ and $h$ in $L$ such that $\{e,h,f\}$ is a $\mathfrak{sl}_2$-triple, that is,
$$
[h,e]=2e,\quad [e,f]=h,\quad [h,f]=-2f;
$$
and such that $\ad h$ diagonalizes $L$ with eigenvalues $\pm2$, $\pm1$ and $0$.
  \end{lemma}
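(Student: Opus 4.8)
The plan is to derive everything from three standard ingredients: the non-degeneracy of $L$, the Jacobson--Morozov theorem, and the representation theory of $\mathfrak{sl}_2$. First I would settle the nilpotency index. Since $L$ is simple over a field of characteristic $0$ it is non-degenerate, so by the result recalled just before the lemma the proper inner ideal $B$ is abelian; hence for $0\neq e\in B$ we get $[e,[e,[e,L]]]\subseteq[e,[B,L]]\subseteq[B,B]=0$, i.e. $(\ad e)^3=0$. To see that the index is exactly $3$, suppose $(\ad e)^2=0$. A short application of the Jacobi identity (valid in characteristic $\neq2$) then gives $[e,[x,[e,y]]]=[[e,x],[e,y]]=0$ for all $x,y\in L$, so $e$ would be a sandwich element (absolute zero divisor) of $L$, contradicting non-degeneracy. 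Hence $(\ad e)^2\neq 0=(\ad e)^3$, so $\ad e$ is nilpotent of index $3$.

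Next I would produce the $\mathfrak{sl}_2$-triple. As $L$ is semisimple over a field of characteristic $0$ and $\ad e$ is nilpotent, the Jacobson--Morozov theorem yields $h,f\in L$ with $[h,e]=2e$, $[e,f]=h$, $[h,f]=-2f$; thus $\mathfrak{s}=\langle e,h,f\rangle\cong\mathfrak{sl}_2$ acts on $L$ through $\ad$.

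Finally I would read off the grading. Decompose the finite-dimensional $\mathfrak{s}$-module $L$ into irreducibles; an irreducible summand of highest weight $m\geq 0$ has $\ad h$-eigenvalues $m,m-2,\dots,-m$, and on it $\ad e$ is nilpotent of index exactly $m+1$. Since $(\ad e)^3=0$ on all of $L$, every occurring $m$ satisfies $m\leq 2$, so $\ad h$ is diagonalizable with all its eigenvalues in $\{-2,-1,0,1,2\}$ and $L=L_{-2}\oplus L_{-1}\oplus L_0\oplus L_1\oplus L_2$, where $L_j$ is the $j$-eigenspace. Here $e\in L_2$, $h\in L_0$, $f\in L_{-2}$ are all non-zero (and $(\ad e)^2\neq 0$ forces a summand with $m=2$), so the values $\pm2$ and $0$ genuinely occur.

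I expect the only points needing care are the two non-formal inputs: that simple Lie algebras of characteristic $0$ are non-degenerate (used both to force $B$ abelian and to exclude sandwich elements) and the Jacobson--Morozov theorem; the remainder is routine $\mathfrak{sl}_2$-module bookkeeping. The one mild subtlety is the implication ``$(\ad e)^2=0\Rightarrow e$ is a sandwich'', which requires the characteristic to be $\neq 2$, and this is harmless over $\mathbb{R}$.
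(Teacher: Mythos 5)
Your proof is correct and follows essentially the same route as the paper: abelian inner ideal $\Rightarrow(\ad e)^3=0$, Jacobson--Morozov, and then the $\mathfrak{sl}_2$-weight analysis (which the paper outsources to a 1958 lemma of Jacobson but which you correctly redo via complete reducibility). The only real difference is that you invoke non-degeneracy of $L$ to rule out $(\ad e)^2=0$, whereas the paper gets the index exactly $3$ for free from the triple itself via $[e,[e,f]]=[e,h]=-2e\neq 0$ --- a slightly more economical step, since the standard proof of the non-degeneracy you cite is itself this very Jacobson--Morozov computation.
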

  
  \begin{proof}
  As above, $e$ is ad-nilpotent, since $B$ is abelian.
  By the Jacobson-Morozov Lemma (for instance, consult the version in \cite[Chapter~III, \S11, Theorem~17]{libroJaconsonLie}), we can embed $e$ in a $\mathfrak{sl}_2$-triple of $L$. 
  In particular the nilpotency index of $e$ is precisely $3$ since $[e,[e,f]]=-2e\ne0$. Now,  
   \cite[Lemma~1]{Jacobson1958} gives the result on the eigenvalues of $\ad h$ and the corresponding diagonalization.
  \end{proof}
  
  An important source of inner ideals are precisely the ad-nilpotent elements.
It is clear that if $\langle e\rangle$ is an  inner ideal ($[e,[e,L]]\subseteq\langle e\rangle$), then $e$ is an ad-nilpotent element of index at most $3$.
 The converse is not true, but   if $e$ is an ad-nilpotent element of index at most $3$, then $[e,[e,L]]$ is an inner ideal (\cite[Lemma~1.8]{transactions77}).
 These elements play a key role in the incidence geometries related to inner ideals.

 \begin{definition}
\label{def_extremal element}
A  non-zero element $e$ of a Lie algebra $L$ is called an \textit{extremal element} if $[e,[e,L]]\subseteq\langle e\rangle$, that is, if $\langle e\rangle$ is an inner ideal.
Also, $e$  is called an \textit{absolute zero divisor} or a \textit{sandwich element} if $[e,[e,L]]=0$. 
An extremal element in $L$ is called   \textit{pure} if it is not an absolute zero divisor. A Lie algebra is \textit{non-degenerate} if it has no   absolute zero divisors. 
\end{definition}

\begin{example}
The elements  $e=\tiny\begin{pmatrix}0&1\\0&0\end{pmatrix}$ and $f=\tiny\begin{pmatrix}0&0\\1&0\end{pmatrix}$ in  the special  Lie algebra $\mathfrak{sl}_2(\RR)$ are both extremal elements.\end{example}

It is   clear, independently of the characteristic of the field, that $\langle e\rangle$ is a minimal abelian inner ideal for any  extremal element $e$. 
If besides $L$ is a simple Lie algebra over a field of zero characteristic, Lemma~\ref{le_lodeladescomposicion} says that every extremal element in $L$ is pure and 
that the Lie algebra $L$ is non-degenerate. (The non-degeneracy of the simple Lie algebra is even true by removing the hypothesis of finite dimension  \cite[Corollary~3.24]{libroAntonio}.)
 \medskip

Next we focus in the real case. If $B$ is an inner ideal of a real Lie algebra $\g$, then the complexification $B^\CC:=B\otimes_{\RR}\CC$ is an inner ideal of the complex Lie algebra $\g^\CC$. So, the first step in order to classify inner ideals of the real Lie algebras is to know the classification of the inner ideals of the complex Lie algebras. This classification is  well-known for complex finite-dimensional simple Lie algebras. In such case every abelian inner ideal coincides with the \emph{corner} $L_n$ of some $\ZZ$-grading $L=\oplus_{m=-n}^nL_m$ of the simple $\CC$-algebra $L$.
A very concrete description of the   abelian inner ideals of   $L$   is obtained in \cite{Draper2012} in terms of roots (and also related to Jordan pairs, see \cref{se_JP}). 
Namely, take $H$ a Cartan subalgebra of $L$, and fix $\Delta=\{\alpha_1,\dots,\alpha_l\}$ a basis of   the root system $\Phi$ relative to $H$.
Denote by $\tilde\alpha=\sum_{i=1}^lm_i\alpha_i$ the maximal root in  $\Phi$ relative to the fixed ordering. 
Consider, for any subset of indices $I\subseteq \{1,\dots,l\}$, the set of roots
\begin{equation}\label{eq_fiI}
\Phi_I:=\{\alpha=\sum_{1\le i\le l} p_i \alpha_i\in\Phi: p_j=m_j
\hbox{ for all } j\in I\},
\end{equation}
and the corresponding sum of root subspaces
\begin{equation}\label{eq_B_I}
B_I:=\bigoplus_{\alpha\in \Phi_I}L_\alpha=\bigoplus_{\alpha\in \Phi}\{L_\alpha\ :\,
\alpha=\sum_{1\le i\le l} p_i \alpha_i\hbox{ with } p_j=m_j
\hbox{ for all } j\in I\}.
\end{equation}
It is easy to check that $B_I$ is always an abelian inner ideal of $L$, but, as in \cite[Theorem~3.1 or Theorem~4.4]{Draper2012}, the converse is also true: 
  for any  non-zero abelian inner ideal $B$ of $L$, there is an automorphism $\varphi\in\Aut(L)$ and a subset $I\subseteq\{1,\dots,l\}$ such that $\varphi(B)=B_I$.

Note that, if $I\subseteq J\subseteq\{1,\dots,l\}$, then $B_J\subseteq B_I$.
In particular, the maximal abelian inner ideals  of $L$ are conjugate to $B_{\{i\}}$ for some $i\in\{1,\dots,l\}$. 
It is   straightforward to describe these inner ideals by means of combinatorial arguments, as well as the  lattice $\{B_I: I\subseteq\{1,\dots,l\}\}$ (for fixed $H$ and $\Delta$). The reader can find the related Hasse diagrams  in \cite{Draper2012}.

%%%%%%%%%%%%%%%%%%%%%%%%%%%%%%%%%%%%%%%%%%%%%%%%%%%%%%%%

\section{Results on the real case}\label{se_mainresult}

Assume we   are still  in the above setting: $\g$ is a real simple Lie algebra with complexification $\g^\CC=L$,   $ \Delta=\{\alpha_1,\dots,\alpha_l\}$ is a set of simple roots of the root system $\Phi$ relative to  a Cartan subalgebra $H$ of $L$, and $\tilde\alpha=\sum_{i=1}^lm_i\alpha_i$ is the maximal root in  $\Phi$ relative to the ordering given by $\Delta$.

The following  result provides us a source for finding abelian inner ideals of $\mathfrak{g}$  whose complexification is just $B_I$ for a suitable choice of the indices in $I$.

\begin{definition}\label{de_adapted}
We will say that a non-empty  set $   I\subseteq\{1,\dots,l\}$  is \emph{adapted to the Satake diagram} of $\g$ if it satisfies\begin{itemize}
\item[i)] If $i\in I$, the related node in the Satake diagram of $\mathfrak{g}$ is white (that is, $\alpha_i\vert_{\mathfrak{a}}\ne0$);
\item[ii)] If $i\in I$, $j\in\{1,\dots,l\}$ and $\alpha_i\vert_{\mathfrak{a}}=\alpha_j\vert_{\mathfrak{a}}$, then $j\in I$.
\end{itemize}
\end{definition}

\begin{proposition}\label{pr_BI}
For any $I\subseteq\{1,\dots,l\}$ adapted to the Satake diagram of $\g$, we have
\begin{itemize}
\item[a)] If $\alpha\in \Phi_I$, then $\alpha\vert_{\mathfrak{a}}\in\Sigma$;
\item[b)] If $\alpha\in \Phi_I$ and $\beta\in\Phi$ satisfies $\alpha\vert_{\mathfrak{a}}=\beta\vert_{\mathfrak{a}}$, then $\beta\in  \Phi_I$;
\item[c)]  $\mathcal{B}_I:=\oplus\{\mathfrak{g}_{\bar\alpha}: \alpha\in\Phi_I\}$ is an abelian inner ideal of $\mathfrak{g}$ such that $(\mathcal{B}_I)^\CC=B_I$.
\end{itemize}
\end{proposition}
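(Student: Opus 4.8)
The plan is to establish the three items in order, since each feeds into the next. For item~a), I would start from the fact that $\alpha\in\Phi_I$ with $I$ nonempty and adapted; picking any $j\in I$, condition~i) of Definition~\ref{de_adapted} gives $\alpha_j\vert_{\mathfrak a}\neq 0$, and since $p_j=m_j\geq 1$ in the expansion $\alpha=\sum_i p_i\alpha_i$, the coefficient of $\alpha_j$ is nonzero. The key point is that all coefficients $p_i$ are nonnegative (as $\alpha$ is positive, being congruent to the maximal root in the $j$-th coordinate for $j\in I$), and the restriction map $\alpha\mapsto\bar\alpha$ is linear and sends simple roots to the closed dominant Weyl chamber of $\Sigma$ (this is the content of the compatible choice of $\Delta$ recalled before the definition of the Satake diagram). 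Hence $\bar\alpha=\sum_i p_i\bar\alpha_i$ is a nonnegative combination of dominant weights with at least one strictly positive contribution, so $\bar\alpha\neq 0$; as $\bar\alpha$ is the restriction of a root in $\Phi\setminus\Phi_0$, it lies in $\Sigma=\{\bar\beta:\beta\in\Phi\setminus\Phi_0\}$.

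For item~b), suppose $\alpha\in\Phi_I$ and $\beta\in\Phi$ with $\bar\beta=\bar\alpha$. Writing $\beta=\sum_i q_i\alpha_i$, I need $q_j=m_j$ for every $j\in I$. The idea is to compare $\alpha$ and $\beta$ coordinate-by-coordinate using the action of the little Weyl group / the structure of the Satake diagram. Concretely, two roots with the same restriction to $\mathfrak a$ differ by an element of the $\mathbb Z$-span of $\Delta_0$ (the compact roots), composed possibly with the involution $\sigma$ attached to the Satake diagram that swaps arrow-joined nodes; in all cases, the coefficient $q_j$ of a white node $\alpha_j$ in $\beta$ is determined by $\bar\beta$ together with the combinatorics of the arrows, and condition~ii) of the adaptedness hypothesis is exactly what guarantees that this does not move us outside $\Phi_I$ — if $\alpha_i\vert_{\mathfrak a}=\alpha_j\vert_{\mathfrak a}$ then $i$ and $j$ are both in $I$ and the maximal root $\tilde\alpha$ has $m_i=m_j$, so the ``$p_j=m_j$'' conditions are preserved under the arrow symmetry. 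I would formalize this by noting that $\Phi_I$ is stable under $\sigma$ (using $m$ is $\sigma$-invariant, a standard fact for the maximal root) and under adding compact roots that do not touch the white nodes indexed by $I$.

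For item~c), the equality $(\mathcal B_I)^\CC=B_I$ is the main structural assertion, and I expect it to be the part requiring the most care. By \eqref{eq_espaciosraices}, $\mathfrak g_{\bar\alpha}=(\oplus\{\mathfrak g^\CC_\beta:\bar\beta=\bar\alpha\})\cap\mathfrak g$; summing over $\alpha\in\Phi_I$ and using item~b) (which says the set of $\beta$ with $\bar\beta=\bar\alpha$ is entirely contained in $\Phi_I$ once one such $\beta$ is), the sum $\oplus\{\mathfrak g_{\bar\alpha}:\alpha\in\Phi_I\}$ is a real form of $\oplus\{\mathfrak g^\CC_\beta:\beta\in\Phi_I\}=B_I$; complexifying gives $(\mathcal B_I)^\CC=B_I$. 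One must check that $\mathcal B_I$ is genuinely a real subspace stable under nothing extra — this is automatic since it is a sum of restricted root spaces $\mathfrak g_\lambda\subseteq\mathfrak g$. Finally, that $\mathcal B_I$ is an abelian inner ideal of $\mathfrak g$ follows by descent: $B_I$ is an abelian inner ideal of $L=\mathfrak g^\CC$ (recalled in \cref{se_background_inner}), the conditions $[\mathcal B_I,[\mathcal B_I,\mathfrak g]]\subseteq\mathcal B_I$ and $[\mathcal B_I,\mathcal B_I]=0$ can be tested after complexification since $\mathfrak g\otimes_\RR\CC=\mathfrak g^\CC$, and there they reduce to $[B_I,[B_I,L]]\subseteq B_I$ and $[B_I,B_I]=0$, which hold.

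The main obstacle is item~b): I expect one genuinely needs the precise description of when two roots of $\Phi$ have equal restriction to $\mathfrak a$ — namely that $\bar\beta=\bar\alpha$ forces $\beta\in\{\alpha,\sigma(\alpha)\}$ modulo $\mathbb Z\Delta_0$, where $\sigma$ is the Satake involution — and one must verify carefully that condition~ii) of Definition~\ref{de_adapted}, together with $\sigma$-invariance of the coefficients of the highest root, exactly closes $\Phi_I$ under these operations. Everything else is either linear algebra over $\RR$ versus $\CC$ or a direct appeal to the complex classification in \cref{se_background_inner}.
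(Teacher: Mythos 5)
Your items a) and c) run essentially parallel to the paper's proof. For a) the operative fact is simply that the distinct nonzero restrictions $\{\bar\alpha_i:\alpha_i\in\Delta\setminus\Delta_0\}$ are linearly independent in $\mathfrak a^*$ (your appeal to the ``closed dominant Weyl chamber'' is not the right justification --- simple roots are not dominant --- but nonnegativity of the $p_i$, $p_j=m_j>0$, and linear independence give $\bar\alpha\ne0$ exactly as you intend). For c) the paper likewise proves $\mathcal B_I=B_I\cap\g$ by the two inclusions and a dimension count using \eqref{eq_espaciosraices} and item b), and then gets the inner ideal property from $[\mathcal B_I,[\mathcal B_I,\g]]\subseteq [B_I,[B_I,\g^\CC]]\cap\g\subseteq B_I\cap\g=\mathcal B_I$; your ``descent'' phrasing is the same computation.

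The real divergence, and the one genuine gap, is in b). You propose to invoke the structural fact that $\bar\beta=\bar\alpha$ forces $\beta\in\{\alpha,\sigma(\alpha)\}+\mathbb Z\Delta_0$. This is stronger than what you need, you do not prove it, and it is not an innocuous lemma: it is a statement about the fibres of the restriction map on $\Phi$ (equivalently about restricted multiplicities), and as written your plan leaves it as an unverified black box. The paper avoids it entirely with an elementary argument you should adopt: writing $\alpha=\sum_i p_i\alpha_i$ and $\beta=\sum_i q_i\alpha_i$, the relation $0=\bar\beta-\bar\alpha=\sum_{\alpha_i\notin\Delta_0}(q_i-p_i)\bar\alpha_i$ together with linear independence of the distinct $\bar\alpha_i$ gives, for $j\in I$ (white by condition i)), either $q_j=p_j=m_j$ if $\alpha_j$ is not arrow-joined, or $q_j+q_{\mu(j)}=p_j+p_{\mu(j)}=m_j+m_{\mu(j)}$ if it is joined to $\alpha_{\mu(j)}$ (condition ii) gives $\mu(j)\in I$, hence $p_{\mu(j)}=m_{\mu(j)}$). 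Since $\tilde\alpha$ is the highest root, $q_i\le m_i$ for every $i$, and the displayed equality then forces $q_j=m_j$. This uses only the adaptedness conditions and the maximality of $\tilde\alpha$, with no input from the fine structure of the Satake involution; I recommend replacing your sketch of b) by this argument rather than trying to justify the classification of roots with equal restriction.
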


\begin{proof} Recall that, if $r=\dim\mathfrak{a}$ (the  {real rank} of $\mathfrak{g}$), the set $\{\bar\alpha_i:\alpha_i\in\Delta\setminus\Delta_0\}$ is a set of $r$ linearly independent elements in $\mathfrak{a}^*$. 
Note that 
\[r=l-|\Delta_0|-\frac 1 2 r_0,\] 
with $r_0$ the number of nodes connected by an arrow in the Satake diagram.
Indeed, this follows by taking into account that $\bar\alpha_i=\bar\alpha_j$ if $\alpha_i$ and $\alpha_j$ are connected by an arrow.

Part a) is clear, since if $\alpha=\sum_{i=1}^l p_i \alpha_i\in\Phi_I$, then the coefficient of $\bar\alpha_j$ in $\bar\alpha =\sum_{\alpha_i\notin\Delta_0} p_i \bar\alpha_i$  for $j\in I$ is $p_j=m_j\ne0$. 
For b), take roots $\alpha=\sum_{i=1}^l p_i \alpha_i$, $\beta=\sum_{i=1}^l q_i \alpha_i\in \Phi$ with $\bar\alpha=\bar\beta$
and $p_j=m_j$ for all $j\in I$, and let us check that also $q_j=m_j$ whenever $j\in I$. 
Fixed $j\in I$, then $\alpha_j\notin\Delta_0$ by i). 
If $\alpha_j$ is not joined by an arrow to any other node, then $0=\sum_{i=1}^l (q_i-p_i) \bar\alpha_i$ implies $q_j-p_j=0$. 
And, if $\alpha_j$ is joined to  $\alpha_k$ (which is necessarily unique, namely, $k=\mu(j)$ for $\mu$ the involution of the Dynkin diagram), 
the linear independence of $\{\bar\alpha_i:\alpha_i\in\Delta\setminus\Delta_0\}$
implies that $q_j-p_j+q_k-p_k=0$. 
We also have $p_k=m_k$, since ii) yields $k\in I$.
As $\tilde\alpha$ is the maximal root, $q_i\le m_i$ for any index $i$ and so $ m_j+m_k=p_j+p_k=q_j+q_k\le m_j+m_k$, forcing $q_j=m_j$.
For part c),  let us check that $\mathcal{B}_I= B_I\cap \mathfrak{g}$.  
On one hand, if  $\alpha \in  \Phi_I$, then
  \[
  \mathfrak{g}_{\bar\alpha}\stackrel{(1)}=(\oplus\{\mathfrak{g}^\CC_\gamma:\gamma\in\Phi,\bar\gamma=\bar\alpha\})\cap \mathfrak{g}\stackrel{b)}{=}(\oplus\{\mathfrak{g}^\CC_\gamma:\gamma\in\Phi_I,\bar\gamma=
  \bar\alpha\})\cap \mathfrak{g}\subseteq B_I \cap \mathfrak{g}.
  \]
This means that $\mathcal{B}_I\subseteq B_I\cap \mathfrak{g}$. 
On the other hand,  $(\mathfrak{g}^\CC)_{\gamma}\subseteq(\mathfrak{g}_{\bar\gamma})^\CC$ for any $\gamma\in\Phi$, so
  \[
  B_I=\oplus\{(\mathfrak{g}^\CC)_{\gamma}: \gamma\in\Phi_I\}\subseteq \oplus\{(\mathfrak{g}_{\bar\gamma})^\CC: \gamma\in\Phi_I\}\subseteq (\mathcal{B}_I)^\CC\subseteq (B_I \cap \mathfrak{g})^\CC\subseteq B_I,
  \]
  which implies that $(\mathcal{B}_I)^\CC= (B_I \cap \mathfrak{g})^\CC$ and $\mathcal{B}_I$ is a subset of $B_I\cap \mathfrak{g}$ with the same dimension, so necessarily equal. Finally 
  $
  [\mathcal{B}_I,[\mathcal{B}_I, \mathfrak{g}] ]\subseteq [B_I,[B_I,\mathfrak{g}]]\subseteq B_I
  $ and is also contained in $\mathfrak{g}$, so that it is contained in $\mathcal{B}_I$, showing that $\mathcal{B}_I$ is an inner ideal.
\end{proof}

Our objective is to prove that every abelian inner ideal of $\mathfrak{g}$ is conjugated to some $\mathcal{B}_I$ as in \cref{pr_BI}. First, we adapt \cite[Lemma~4.1]{Draper2012} in order to prove that, given any abelian inner ideal $B$ of $\g$, there is a decomposition as above  such that $B$ is homogeneous and $B\cap \mathfrak{g}_0=0$, that is, 
$B=\oplus_{\lambda\in\Sigma}B\cap \mathfrak{g}_\lambda$. Moreover, it is possible to prove something slightly stronger, namely:

\begin{lemma}\label{le_hom}
If $B$ is an abelian inner ideal of a simple real Lie algebra $\mathfrak{g}$, there is a maximal abelian subspace $\mathfrak{a}$ satisfying that $\kappa\vert_{\mathfrak{a}\times\mathfrak{a}}$ is a positive definite symmetric bilinear form, and that, if 
$\Gamma:\mathfrak{ g}=\mathfrak{g}_0\oplus\big(\hspace{-3pt}\oplus_{\lambda\in\Sigma}\mathfrak{g}_\lambda\big)$ is the decomposition in restricted root spaces 
$\mathfrak{ g}_{\lambda}=\{x\in\mathfrak{ g}:[h,x]=\lambda(h)x\quad\forall h\in\mathfrak{ a}\}$ as above, then $B$ is homogeneous and $\mathfrak{ g}_{\lambda}\subseteq B$ for each $\lambda\in\Sigma$ such that 
$\mathfrak{ g}_{\lambda}\cap B\ne0$. Besides, $\mathfrak{ g}_{0}\cap B=0$.
\end{lemma}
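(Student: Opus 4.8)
The plan is to follow the strategy of \cite[Lemma~4.1]{Draper2012} but carried out inside $\mathfrak g$ rather than $\mathfrak g^\CC$, using the restricted root space decomposition in place of the root space decomposition. The starting point is \cref{le_lodeladescomposicion}: given a non-zero abelian inner ideal $B$, pick $0\ne e\in B$ and obtain an $\mathfrak{sl}_2$-triple $\{e,h,f\}$ in $\mathfrak g$ with $\ad h$ diagonalizable with eigenvalues in $\{0,\pm1,\pm2\}$; this is exactly the setting that produces a $5$-grading $\mathfrak g=\mathfrak g_{-2}\oplus\cdots\oplus\mathfrak g_2$ with $e\in\mathfrak g_2$. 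The first step is to upgrade this to the statement that $B$ itself is graded by $\ad h$, i.e. $B=\bigoplus_k B\cap\mathfrak g_k$, and that in fact $B\subseteq\mathfrak g_1\oplus\mathfrak g_2$ with $\mathfrak g_2\subseteq B$; one also wants $B\cap\mathfrak g_0=0$. These are the homogeneity facts that are already established in the complex setting in \cite{Draper2012}, and the arguments there (computing $[e,[e,-]]$ and $[h,-]$ against elements of $B$, using that $B$ is abelian and inner) go through verbatim over $\RR$ since nothing in them used algebraic closure; I would simply cite \cite[Lemma~4.1]{Draper2012} for the analogous conclusions applied to $\mathfrak g$.

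Next I would arrange that $h$ lies in a maximal abelian subspace $\mathfrak a$ on which $\kappa$ is positive definite. The key observation is that the semisimple element $h$ of an $\mathfrak{sl}_2$-triple, after conjugating by $\mathrm{Int}(\mathfrak g)$, can be assumed to lie in the $(-1)$-eigenspace of a Cartan involution $\theta$, i.e.\ in the "noncompact part" $\mathfrak p=\mathrm{Fix}(-\theta)$; this is a standard fact about nilpotent/$\mathfrak{sl}_2$ orbits (every nilpotent element is $\mathrm{Int}(\mathfrak g)$-conjugate to one in $\mathfrak p$, carrying its whole triple into a Cartan-compatible normal triple). Once $h\in\mathfrak p$ is real semisimple, $\RR h$ extends to a maximal abelian subspace $\mathfrak a$ of $\mathfrak p$, and $\kappa|_{\mathfrak a\times\mathfrak a}$ is automatically positive definite because $\kappa$ is positive definite on all of $\mathfrak p$. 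Then the eigenspace decomposition of $\ad h$ refines into the restricted root space decomposition $\mathfrak g=\mathfrak g_0\oplus\bigoplus_{\lambda\in\Sigma}\mathfrak g_\lambda$: each $\mathfrak g_k$ (eigenspace of $\ad h$) is the sum of those $\mathfrak g_\lambda$ with $\lambda(h)=k$, together with part of $\mathfrak g_0$ when $k=0$.

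It remains to promote homogeneity with respect to $\ad h$ to homogeneity with respect to the full $\mathfrak a$, i.e.\ to show $B=\bigoplus_{\lambda\in\Sigma}B\cap\mathfrak g_\lambda$ and that $\mathfrak g_\lambda\cap B\ne0$ forces $\mathfrak g_\lambda\subseteq B$. The idea is the usual bootstrapping: having fixed the triple at one element $e\in B\cap\mathfrak g_2$, one runs the same argument at \emph{every} non-zero element of $B$ and intersects the resulting gradings, or more efficiently one chooses $h$ generically inside $\mathfrak a$ among those $h'$ for which $\{e,h',f'\}$ is still a triple adapted to $B$; genericity makes the eigenspaces of $\ad h$ as fine as those of $\ad\mathfrak a$, which gives $B=\bigoplus_\lambda B\cap\mathfrak g_\lambda$. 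For the "all or nothing" statement $\mathfrak g_\lambda\cap B\ne0\Rightarrow\mathfrak g_\lambda\subseteq B$, I would use that each nonzero $e_\lambda\in\mathfrak g_\lambda\cap B$ is again an element of the abelian inner ideal $B$, hence ad-nilpotent of index $\le3$, hence (being a root vector for $\mathfrak a$) satisfies $\mathfrak g_\lambda=[\mathfrak g_{-\lambda},[\,e_\lambda,\mathfrak g_{-\lambda}]]$-type identities forcing $\mathfrak g_\lambda\subseteq[B,[B,\mathfrak g]]\subseteq B$; this is precisely where \cite[Lemma~4.1]{Draper2012}'s computation is invoked, now read inside $\mathfrak g$. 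Finally $\mathfrak g_0\cap B=0$ follows because $\mathfrak g_0$ is reductive with $\mathfrak a$ central part abelian and the compact part $\mathrm{Cent}_{\mathfrak t}(\mathfrak a)$ consisting of semisimple (in particular not ad-nilpotent of index $\le3$ unless zero) elements, so no non-zero element of $\mathfrak g_0$ can lie in the abelian inner ideal $B$.

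\textbf{Main obstacle.} The only genuinely non-routine point is the reduction that places $h$ (equivalently the whole $\mathfrak{sl}_2$-triple attached to $e$) into a Cartan-compatible position with $h\in\mathfrak a\subseteq\mathfrak p$, and doing this \emph{simultaneously} with enough genericity that the $\ad h$-grading is as fine as the $\ad\mathfrak a$-grading on $B$. One has to be a little careful that conjugating $e$ by $\mathrm{Int}(\mathfrak g)$ to move the triple does not disturb the conclusion (it replaces $B$ by a conjugate inner ideal, which is harmless since the statement is only up to the choice of $\mathfrak a$), and that a generic $h$ inside the relevant subspace of $\mathfrak a$ still gives a valid triple with $f$; everything else is a faithful transcription of the complex argument.
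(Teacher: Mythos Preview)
Your overall strategy---adapting \cite[Lemma~4.1]{Draper2012} to the real setting via an $\mathfrak{sl}_2$-triple---is the same as the paper's, but the mechanism you propose for building $\mathfrak a$ is quite different and contains a real gap.

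The paper does \emph{not} invoke a Cartan involution or conjugate $h$ into some $\mathfrak p$. Instead it builds $\mathfrak a$ from the inside out: starting from one triple $\{e_1,h_1,f_1\}$ with $e_1\in B$, it observes that the whole $\ad h_1$-eigenspace $\mathfrak g_2$ lies in $B$ (via $4x=[e_1,[e_1,[f_1,[f_1,x]]]]$), then picks $e_2\in\mathfrak g_2\subseteq B$ and completes it to a triple $\{e_2,h_2,f_2\}$ with $f_2\in\mathfrak g_{-2}$, so that $[h_1,h_2]=0$. Iterating until the $\ZZ^k$-grading by $\ad h_1,\dots,\ad h_k$ cannot be refined gives both the ``all or nothing'' property (if $0\ne e_{k+1}\in\mathfrak g_{(n_1,\dots,n_k)}\cap B$, the new $h_{k+1}$ would refine unless $\mathfrak g_{(n_1,\dots,n_k)}$ already equals its $\ad h_{k+1}$-eigenspace of eigenvalue $2$, which is contained in $B$) and $\mathfrak g_{(0,\dots,0)}\cap B=0$ (otherwise $h_1\in B$ and $2e_1=[h_1,e_1]\in[B,B]=0$). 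Positive definiteness of $\kappa$ on $\langle h_1,\dots,h_k\rangle$ is checked directly from the grading, with no $\theta$ needed; one then enlarges to a maximal $\mathfrak a$, and the two properties persist under refinement because $B$ is already a sum of full $\Gamma_0$-pieces.

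Your route has two concrete problems. First, the ``generic $h$ in $\mathfrak a$'' idea does not work as stated: $h$ is constrained by $[h,e]=2e$, $[e,f]=h$, so you cannot move it freely within $\mathfrak a$ while keeping a triple with the given $e$; and running the triple construction at every element of $B$ does not by itself produce commuting $h$'s lying in a common $\mathfrak a$. Second, your ``all or nothing'' argument via an identity of the form $\mathfrak g_\lambda=[e_\lambda,[e_\lambda,\mathfrak g_{-\lambda}]]$ fails precisely when $\dim\mathfrak g_\lambda>1$, which is the generic situation for restricted root spaces: for a single $e_\lambda$, the map $(\ad e_\lambda)^2\colon\mathfrak g_{-\lambda}\to\mathfrak g_\lambda$ need not be surjective. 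The paper's maximal-refinement trick is exactly what circumvents this, and it is the missing idea in your plan.
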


\begin{proof}
Take $0\ne e\in B$. By   \cref{le_lodeladescomposicion},  we can find an  $\mathfrak{sl}_2$-triple $\{e,h,f\}\subseteq\g$ such that 
$$
\mathfrak{g}=\mathfrak{g}_{-2}\oplus \mathfrak{g}_{-1}\oplus \mathfrak{g}_{0}\oplus \mathfrak{g}_{1}\oplus \mathfrak{g}_{2}
$$
is the diagonalization relative to $\ad h$, i.e., $\mathfrak{g}_{m}=\{x\in\g:[h,x]=m x\}   $. Of course $B$ is homogeneous for this grading, since 
$[h,B]=[[e,f],B]\subseteq [[e,B],f]+[e,[f,B]]=[e,[f,B]]\subseteq[B,[B,\mathfrak{g}]]\subseteq B$. Let us check that   $\mathfrak{g}_2\subseteq B$.
Indeed, for any $x\in\g_2$, $[h,x]=2x$. As $[e,x]=0$, the Jacobi identity tells that  $[e,[f,x]]=2x$. Now
$
[e,[f,[f,x]]]=[h,[f,x]]+[f,2x]=[f,2x]
$
and
$$
4x= [h,2x]=  [e,[f,2x]]= [e,[e,[f,[f,x]]]]\in [e,[e,\mathfrak{g}_{-2}]]\subseteq [B,[B,\g]]\subseteq B.
$$

If we take a second element $e'\in \mathfrak{g}_2$, again   there is an $\mathfrak{sl}_2$-triple $\{e',h',f'\}$ in $\g$. As $[e',[e',f']]=-2e'$, then \cite[Proposition~5.2]{Lopez2007}   (see also the proof of \cite[Lemma~4.1]{Draper2012}) says that we can replace $f'$ with another element  in $\mathfrak{g}_{-2}$. Thus  $h'=[e',f']\in \mathfrak{g}_0$ commutes with $h$, and we can consider the simultaneous diagonalization. 
In this way we can take a collection of elements $\{e_1,\dots,e_k\}\subseteq B\setminus\{0\}$ such that the related 
$\mathfrak{sl}_2$-triples $\{e_i,h_i,f_i\}_{i=1}^k$ satisfy $[h_i,h_j]=0$ for any $i,j$ and the $\ZZ^k$-induced grading on $\mathfrak{g}$,
$$
\Gamma_0: \mathfrak{g}=\oplus \mathfrak{g}_{(n_1,\dots,n_k)},\qquad \mathfrak{g}_{(n_1,\dots,n_k)}=\{x\in\mathfrak g:[h_i,x]=n_i x\quad \forall i\}
$$
 has  a maximal number of homogeneous components.  This grading satisfies 
 \begin{itemize}
 \item[a)] If $\mathfrak{g}_{(n_1,\dots,n_k)}\cap B\ne0$, then $\mathfrak{g}_{(n_1,\dots,n_k)}\subseteq B$;
 \item[b)] $\mathfrak{g}_{(0,\dots,0)}\cap B=0$;
 \item[c)] the restriction of the Killing form  $\kappa\vert_{\langle h_1,\dots,h_k\rangle}$ is positive definite.
 \end{itemize}
Indeed, if $0\ne e_{k+1}\in \mathfrak{g}_{(n_1,\dots,n_k)}\cap B$, we can take as before an $\mathfrak{sl}_2$-triple $\{e_{k+1},h_{k+1},f_{k+1}\} $ with $f_{k+1}\in \mathfrak{g}_{(-n_1,\dots,-n_k)}$. 
As the new grading is not a proper refinement of $\Gamma_0$, $e_{k+1}\in \mathfrak{g}_{(n_1,\dots,n_k)}=\mathfrak{g}_{(n_1,\dots,n_k)}\cap\{x\in\g:[h_{k+1},x]=2x\}$ which is contained in $B$ as above (the eigenspace of $\ad h_{k+1}$ of eigenvalue $2$ is). 
Also, if $\mathfrak{g}_{(0,\dots,0)}\subseteq B$, then $2e_1=[h_1,e_1]$ would belong to $[B,B]=0$, a contradiction.
Finally, $h=\sum_{i=1}^k\omega_ih_i$ satisfies $\kappa(h,h)=\sum_{(n_1,\dots,n_k)\in\ZZ^k}(\sum \omega_in_i)^2\dim\g_{(n_1,\dots,n_k)}\ge0$. And if  $\kappa(h,h)=0$, then $[h,\g]=0$ and $h\in Z(\g)=0$. 

Now we can take an abelian subspace $\mathfrak{a}$ containing $ \langle h_1,\dots,h_k\rangle$ maximal with the property  that  $\kappa\vert_{\mathfrak{a}\times\mathfrak{a}}$ is  positive definite. Let $\Gamma$ be the simultaneous diagonalization relative to $\{\ad h:h\in\mathfrak a\}$, which is a refinement of $\Gamma_0$, so that it still satisfies properties a) and b). (In particular this implies the homogeneity of $B$ for the grading $\Gamma$.)  
\end{proof}

Now, we can prove the converse of \cref{pr_BI} to describe, up to conjugation, all the abelian inner ideals of $\g$.

\begin{theorem}\label{mainth}
If $B$ is an abelian inner ideal of $\g$, there is $\emptyset\ne I\subseteq\{1,\dots,l\}$ adapted to the Satake diagram of $\g$ such that $B$ is conjugated to $\mathcal{B}_I$.
\end{theorem}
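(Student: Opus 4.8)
The plan is to combine Lemma~\ref{le_hom} with the known classification in the complex case (\cite[Theorem~3.1 or 4.4]{Draper2012}), transported through the complexification, and then identify precisely which complex inner ideal $B_I$ can arise. First I would invoke Lemma~\ref{le_hom}: given the abelian inner ideal $B$ of $\g$, fix the maximal abelian subspace $\mathfrak a$ it provides, so that $B$ is homogeneous for the restricted root decomposition $\Gamma$, $\mathfrak g_\lambda\subseteq B$ whenever $\mathfrak g_\lambda\cap B\ne 0$, and $\mathfrak g_0\cap B=0$. Completing $\mathfrak a$ to a maximal abelian $\mathfrak h$ gives a Cartan subalgebra with $\mathfrak h^\CC$ Cartan in $L=\g^\CC$, and a basis $\Delta$ adapted to the Satake data as in Subsection~\ref{se_Satake}. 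The complexification $B^\CC$ is then an abelian inner ideal of $L$, homogeneous for the $\Phi$-root decomposition (since $B$ is homogeneous for $\Sigma$, and each restricted root space complexifies to a sum of $\mathfrak g^\CC_\alpha$'s with $\bar\alpha=\lambda$, and these appear ``all or nothing'' by the ``all-or-nothing'' property on $B$).

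Next I would apply the complex classification to $B^\CC$: there is $\psi\in\Aut(L)$ and $I\subseteq\{1,\dots,l\}$ with $\psi(B^\CC)=B_I$. But I do not want an arbitrary complex automorphism; instead I want to argue that, because $B$ is already homogeneous for a root decomposition coming from $\Delta$, one can choose $I$ so that $B^\CC$ itself (without moving it) is a sum of root spaces, i.e. $B^\CC=\bigoplus_{\alpha\in S}\mathfrak g^\CC_\alpha$ for some subset $S\subseteq\Phi$; this is exactly what the homogeneity from Lemma~\ref{le_hom} buys, and then one uses the structure of abelian inner ideals (they correspond to Jordan pairs / $\ZZ$-gradings, so $S=\Phi_I$ for a suitable $I$ after possibly applying an element of the Weyl group — which I would realize inside $\Aut(L)$ stabilizing $\mathfrak h^\CC$, or better, inside $\Aut(\g)$ if I want the conjugation to be real). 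Concretely I expect to mimic the argument of \cite[Theorem~4.4]{Draper2012} replaying it inside the ``homogeneous for $\Delta$'' setting, so that $B^\CC=B_I$ on the nose for that $I$; then by part c) of \cref{pr_BI} and a dimension count, $B=B_I\cap\g=\mathcal B_I$, provided $I$ is adapted.

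The crucial remaining point — and the one I expect to be the main obstacle — is to show that the resulting $I$ is \emph{adapted to the Satake diagram}, i.e. satisfies conditions i) and ii) of Definition~\ref{de_adapted}. For ii): if $j\in I$ and $\bar\alpha_j=\bar\alpha_k$, I must show $k\in I$. This should follow from the fact that $B=B^\CC\cap\g$ together with the ``all-or-nothing'' property: if $\mathfrak g^\CC_{\alpha}\subseteq B^\CC$ and $\bar\alpha=\bar\beta$, then $\mathfrak g_{\bar\alpha}\cap B\ne0$ forces $\mathfrak g_{\bar\alpha}=\mathfrak g_{\bar\beta}\subseteq B$, hence $\mathfrak g^\CC_\beta\subseteq(\mathfrak g_{\bar\beta})^\CC$ meets $B^\CC$; combined with homogeneity of $B^\CC$ this gives $\mathfrak g^\CC_\beta\subseteq B^\CC$, so $\beta\in\Phi_I$, which (running over $\beta=\alpha_k$ plus the maximality-of-$\tilde\alpha$ argument as in the proof of \cref{pr_BI}b) forces $k\in I$. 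For i): I must rule out $i\in I$ with $\alpha_i\in\Delta_0$, i.e. $\bar\alpha_i=0$. Here the point is that $\mathfrak g_0\cap B=0$ by Lemma~\ref{le_hom}; if $\alpha_i\in\Delta_0$ with $i\in I$, then the maximal root $\tilde\alpha\in\Phi_I$ has $\overline{\tilde\alpha}$ still a restricted root, but one should be able to produce an element of $B$ lying in $\mathfrak g_0$ (e.g. a bracket $[\mathfrak g^\CC_\alpha,\mathfrak g^\CC_{-\alpha}]$-type element, using that abelian inner ideals of a $\ZZ$-grading force a $5$-grading whose top is $B_I$, and that $\alpha_i\in\Delta_0$ would push some weight into the $0$-part while staying inside $B_I\cap\g$), contradicting $\mathfrak g_0\cap B=0$. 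I would phrase this as: the $\ZZ$-grading $L=\bigoplus L_m$ defined by the grading element $h_I=\sum_{i\in I}(\text{fundamental coweights})$ has $L_n=B_I$; if $\alpha_i\in\Delta_0$ then $h_I$, being a combination of coweights dual to $\Delta_0$-nodes too, would fail to lie in (the span of) $\mathfrak a$, yet the $\mathfrak{sl}_2$-triples built in Lemma~\ref{le_hom} force the grading element to sit in $\mathfrak a$; this contradiction gives i). Finally, to make the conjugation \emph{real} (an element of $\Aut(\g)$, not just $\Aut(L)$), I would note that the only freedom used — choice of $\mathfrak a$, of $\mathfrak h\supseteq\mathfrak a$, and of positive system — is governed by $\Aut(\g)$ acting transitively on such data (standard conjugacy of Cartan subalgebras / maximal split tori and of minimal parabolics in the real setting), so the $B_I$ obtained is conjugate under $\Aut(\g)$ to the standard $\mathcal B_I$ attached to the reference Satake diagram, completing the proof.
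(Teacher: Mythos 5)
Your overall skeleton coincides with the paper's: invoke \cref{le_hom} to get $\mathfrak a$, the homogeneity of $B$, the all-or-nothing property and $B\cap\mathfrak g_0=0$; complete to $\mathfrak h$, observe that $B^\CC$ is then a sum of root spaces, and rerun the complex classification ``on the nose'' to get $B^\CC=B_I$ and hence $B=\mathcal B_I$. Your treatment of condition ii) is also essentially the paper's, up to one imprecision: the argument does not force $k\in I$ for the $I$ you already have; it shows that every $\alpha\in\Phi_I$ automatically has coefficient $m_{\mu(j)}$ at $\alpha_{\mu(j)}$, so that one may \emph{enlarge} $I$ to $I\cup\mu(I)$ without changing $\Phi_I$. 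This matters because the index set representing a given inner ideal is far from unique ($\Phi_I=\Phi_J$ for many $I\ne J$), and ``adapted'' is a property of the chosen representative, not of the inner ideal.

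The genuine gap is condition i). Your proposed mechanism --- that the grading element $h_I$ of the $\ZZ$-grading with corner $B_I$ would have to lie in $\mathfrak a^\CC$, while a compact $i\in I$ would push it outside --- is not carried out and, as stated, does not work. First, the $\ZZ$-grading with top component $B^\CC$ is not determined by $B^\CC$ (different $I$ with $B_I=B^\CC$ give different grading elements), so exhibiting one $h_I\notin\mathfrak a^\CC$ yields no contradiction; you would have to show that \emph{no} representative avoids $\Delta_0$, which is the opposite of what you want. Second, the claim that the $\mathfrak{sl}_2$-triples of \cref{le_hom} force such a grading element into $\mathfrak a$ is asserted, not proved: those triples generate a $\ZZ^k$-grading that is a priori unrelated to the $\ZZ$-grading attached to $h_I$. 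The paper resolves i) by a purely combinatorial argument that your sketch is missing: first pass to a subset $\tilde I\subseteq I$ \emph{minimal} with $\Phi_{\tilde I}=\Phi_I$ (this is where removable compact indices disappear); then, if some $j\in\tilde I$ had $\bar\alpha_j=0$, minimality produces a root $\alpha$ with $p_j\ne m_j$ but $p_i=m_i$ for $i\in\tilde I\setminus\{j\}$, a chain $\alpha,\alpha+\alpha_{i_1},\dots,\tilde\alpha$ of roots yields consecutive $\beta\notin\Phi_{\tilde I}$ and $\beta+\alpha_j\in\Phi_{\tilde I}$, and since $\overline{\beta+\alpha_j}=\bar\beta$ this contradicts the key consequence of \cref{le_hom} that any root whose restriction lies in $\Sigma'$ must already belong to $\Phi_I$. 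Without the passage to a minimal representative and this path argument (or a genuinely completed substitute), the proof of i) is not established.
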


\begin{proof}
First we apply \cref{le_hom} to find a maximal abelian subspace $\mathfrak{a}$ with $\kappa\vert_{\mathfrak{a}\times\mathfrak{a}}$   positive definite  and such that the restricted root space  $\mathfrak{ g}_{\lambda}=\{x\in\mathfrak{ g}:[h,x]=\lambda(h)x\ \forall h\in\mathfrak{ a}\}$ is contained in $B$ for any $\lambda\in\Sigma$ such that  $\mathfrak{ g}_{\lambda}\cap B\ne0$.
Again consider $\mathfrak{h}$  any maximal abelian subalgebra of $\mathfrak{g}$ containing $\mathfrak{a}$.
Let $\Phi$  denote the root system of $\mathfrak{g}^\mathbb{C}$ relative to the Cartan subalgebra $\mathfrak{h}^\mathbb{C}$ and let $(\mathfrak{g}^\CC)_\alpha$ denote the complex one-dimensional root space for any $\alpha\in \Phi$. 
The root space decomposition of $\mathfrak{g}^\mathbb{C}$ is a grading $\tilde\Gamma$ which is a refinement of $\Gamma^\CC$, for $\Gamma:\mathfrak{ g}=\mathfrak{g}_0\oplus\sum_{\lambda\in\Sigma}\mathfrak{g}_\lambda$. 
Now note that $B^\CC$ is an abelian inner ideal of $\g^\CC$ homogeneous for $\tilde\Gamma$. 
A slight adaptation of  \cite[Theorem~4.4]{Draper2012} implies there are a basis $\Delta=\{\alpha_1,\dots,\alpha_l\}$ of $\Phi$ and a non-empty subset $I\subseteq\{1,\dots,l\}$ such that $B^\CC=B_{\mathfrak{h}^\CC,\Delta,I}= B_I$  as in \cref{eq_B_I}. 
To be precise, \cite[Theorem~4.4]{Draper2012} provides the Cartan subalgebra, while we have started with the Cartan subalgebra $\mathfrak{h}^\mathbb{C}$, but the proof in \cite{Draper2012} only uses that $B^\CC$ is homogeneous for $\tilde\Gamma$. Let us prove that 
 $$
 B=\mathcal{B}_I=\oplus\{\mathfrak{g}_{\bar\alpha}: \alpha\in\Phi_I\}.
 $$
Recall that $B=\oplus_{\lambda\in\Sigma}B\cap \mathfrak{g}_\lambda$, but $B\cap \mathfrak{g}_0= 0$ so there is $\Sigma'\subseteq \Sigma$ such that $B= \oplus_{\lambda\in\Sigma'} \mathfrak{g}_\lambda$. 
Our aim is to check that $\Sigma'=\Sigma^I$, where $\Sigma^I:=\{{\bar\alpha}: \alpha\in\Phi_I\}$.
First take $\lambda\in\Sigma'$ and $ \beta\in\Phi$ such that $\bar\beta=\lambda$. 
Since $\mathfrak{g}_\lambda=(\oplus_{\alpha\in\Phi}\{(\mathfrak{g}^\CC)_\alpha:\bar\alpha=\lambda\})\cap \mathfrak{g}\subseteq B$, we have
$$
(\mathfrak{g}^\CC)_\beta\subseteq (\mathfrak{g}_\lambda)^\CC\subseteq B^\CC=B_I=\oplus_{\alpha\in\Phi_I}(\mathfrak{g}^\CC)_\alpha .
$$
But $\dim_\CC (\mathfrak{g}^\CC)_\beta=1$, so that there is $\alpha\in\Phi_I$ such that $\alpha=\beta$ and in fact $\beta\in\Phi_I$. Hence not only $\Sigma'\subseteq\Sigma^I$, but we have proved that
\begin{equation}\label{eq_rem}
\textrm{if $ \beta\in\Phi$ such that $\bar\beta\in \Sigma'$ then $\beta\in\Phi_I$}.
\end{equation}
Second, take $\alpha\in\Phi_I$ and let us see that $\bar\alpha\in\Sigma'$. Again the one-dimensional space
$(\mathfrak{g}^\CC)_\alpha$ is contained in $B_I=B^\CC= \oplus_{\lambda\in\Sigma'} (\mathfrak{g}_\lambda)^\CC$, so that there is $\lambda\in\Sigma'\subseteq\mathfrak a^*$ such that $(\mathfrak{g}^\CC)_\alpha\subseteq (\mathfrak{g}_\lambda)^\CC=\oplus_{\beta\in\Phi}\{(\mathfrak{g}^\CC)_\beta:\bar\beta=\lambda\}$. 
Then $\bar\alpha=\lambda$, which belongs to $\Sigma'$. We have proved $\Sigma'=\Sigma^I$.
\smallskip

Note that the chosen subset $I$ may not be the required one, because it is not necessarily adapted to the Satake diagram of $\g$. But we can replace $I$ with a convenient subset:
Take $\tilde I\subseteq I$ minimal with the property that $\Phi_{\tilde I}=\Phi_{I}$. 
This can be done since $J\subseteq I$ implies $\Phi_{I}\subseteq \Phi_{J}$. 
The new $\tilde I$ satisfies i) in \cref{de_adapted}.
Indeed, assume there is $j\in\tilde I$ such that $\bar\alpha_j=0$. 
By minimality, $\Phi_{\tilde I\setminus\{j\}}\ne \Phi_{\tilde I}$. 
Then take $\alpha=\sum_{i=1}^lp_i\alpha_i$ with $p_j\ne m_j$ but $p_i=m_i$ for any $i\in \tilde I\setminus\{j\}$. 
Taking into account the combinatorial properties of the roots, there is a path connecting $\alpha$ with the maximal root $\tilde\alpha=\sum_{i=1}^lm_i\alpha_i$, that is, there are $\{i_1,\dots,i_s\}\subseteq \{1,\dots,l\}$ (not necessarily different indices) such that every element in the list
\begin{equation}\label{eq_list}
\alpha,\ \alpha+\alpha_{i_1}  ,\ \alpha+\alpha_{i_1} +\alpha_{i_2} ,\ \dots \ ,\alpha+\alpha_{i_1} +\dots+\alpha_{i_s} =\tilde \alpha
\end{equation}
 is a root. 
 Note that  $j\in \{i_1,\dots,i_s\} $ and that  every element in \eqref{eq_list} belongs to $\Phi_{\tilde I\setminus\{j\}}$. 
 Choose $\beta$ the last element in the list  \eqref{eq_list} such that $\beta+\alpha_j$ belongs to \eqref{eq_list} too. 
 Hence $\beta+\alpha_j\in\Phi_{\tilde I}$ and $\beta\notin\Phi_{\tilde I}$. 
 This means that $\overline{\beta+\alpha_j}\in\Sigma'$ but $\bar\beta\notin\Sigma'$ by \cref{eq_rem}, which is a contradiction since $\overline{\beta+\alpha_j}=\overline{\beta}+\overline{\alpha_j}=\bar\beta$. 
 This finishes the proof in case that the Satake diagram does not contain an arrow. 
 Otherwise, let $\mu$ be the order 2 automorphism of the Dynkin diagram fixing the Satake diagram, that is, $\overline{\alpha_{i}}= \overline{\mu(\alpha_{i})}$ for any $\alpha_i\in \Delta$.  
 (This only happens for some real forms of type $A$, $D$ or $E_6$.) 
 Think of $\mu$ as an automorphism of the set of indices $\{1,\dots,l\}$, i.e.,  $\alpha_{\mu(i)}=\mu(\alpha_{i})$. 
 Now replace $\tilde I$ with $\tilde I\cup \mu(\tilde I)$, which of course satisfies property ii) and it still satisfies i). 
 There is no problem with this replacement since $\Sigma'=\Sigma^{\tilde I}=\Sigma^{\tilde I\cup \mu(\tilde I)}$. 
 Indeed, choose $j\in  \tilde I$ (so that $\alpha_j$ corresponds to a white node) and let us see that $\Sigma^{\tilde I}=\Sigma^{\tilde I\cup \mu(j)}$. 
 Take $\lambda\in \Sigma^{\tilde I}$. 
 This means that $\lambda=\bar\alpha$ with $\alpha=\sum_{i=1}^lp_i\alpha_i\in\Phi$ such that $p_i=m_i$ for any $i\in\tilde I$ (in particular, $p_j=m_j$). 
 As $\overline{\mu(\alpha)}=\overline{\alpha}$ ($\mu$ acts $\ZZ$-linearly in $\Phi$ and fixes the Satake diagram), also $\lambda=\overline{\mu(\alpha)}$ and $ {\mu(\alpha)}\in \Phi_{\tilde I}$ by \cref{eq_rem}. 
 In particular the coefficient of the root $\alpha_j$ in $ {\mu(\alpha)}$ is $m_j$. 
 But $ {\mu(\alpha)}=p_1\mu(\alpha_1)+\dots +p_l\mu(\alpha_l)=p_1\alpha_{\mu(1)}+\dots+p_l\alpha_{\mu(l)}$ has as coefficient of the root $\alpha_j= \alpha_{\mu(\mu(j))}$ just $p_{\mu(j)}$, so that $p_{\mu(j)}=m_j=m_{\mu(j)}$ and then $\alpha\in \Phi_{\tilde I\cup \mu(j)}$.
\end{proof}

\begin{remark}\label{re_methodold}
Some comments are in order. 
First, note that $\{\overline{\alpha_i}:1\leq i\leq l\}\setminus\{0_{\mathfrak a^*}\}$ is a basis of the abstract root system $\Sigma$ (of course taking out the elements appearing twice), because every element in $\Sigma$ is the restriction of an integral linear combination of elements in $\Delta$ with all the coefficients having the same sign. 
The positive roots $\Sigma^+$ are the restrictions to $\mathfrak a$ of the positive  non-compact roots in $\Phi^+\setminus\Phi_0$, and $\tilde\beta:=\tilde\alpha\vert_{\mathfrak a}$ is the maximal root of  $\Sigma$ for this choice of basis. 
The only caution is that the basis of $\Sigma$ obtained in this natural way is not ordered in the usual way. 
In any case, let us denote by $\beta_i=\overline{\alpha_{i}} $ for each $\alpha_i\notin\Delta_0$ (also $\beta_i= \overline{\alpha_{\mu(i)}}$ if the Satake diagram has arrows). So the set of simple roots of $\Sigma$ is $\{\beta_i:i\in K\}$ for certain $K\subseteq\{1,\dots,l\}$ of cardinality $r=\dim\mathfrak a$. 
Then the maximal root $\tilde\beta=\sum_{i\in K}M_i\beta_i$ of $\Sigma$ has coefficients    $M_i=2m_i$ if the Satake diagram has arrows and $i\ne\mu(i)$, and 
 $M_i=m_i$ otherwise.
Note now that the set  related to $\mathcal B_I$ defined by $\Sigma^I=\{\bar\alpha:\alpha\in\Phi_I\}$, coincides with  
 $\Sigma_I=\{\sum_{i\in K} q_i\beta_i\in\Sigma:q_i=M_i\  \forall i\in I\}$ when the subset $I$  is adapted to the Satake diagram of $\g$, i.e., $I$ satisfies i) and ii).
 
Both viewpoints are useful for describing the inner ideals of a concrete simple Lie algebra $\g$. We will follow mainly that one in \cref{mainth},
  which allows  us to determine the dimensions of the inner ideals of $\g$ by counting roots in $\g^\CC$ or simply using the tables and lattices  in \cite{Draper2012}. This is quite easy since $\dim\mathcal B_I=| \Phi_I|$ (in general $\ne| \Sigma_I|$).
In the second approach, the advantage is that  the root system  $\Sigma$ is considerably smaller than $\Phi$, and the corresponding sets $\Sigma_I$ are equally listed  (at least for the reduced root systems).
But $\dim\mathcal B_I=\sum_{\lambda\in\Sigma_I}m_\lambda$,  so that  the knowledge of the restricted multiplicities is necessary.
An example is shown in \cref{reotrometodo}.  
 \end{remark}

%%%%%%%%%%%%%%%%%%%%%%%%%%%%%%%%%%%%%%%%%%%%%%%%%%%%%%%%

\section{Classification of inner ideals of real simple Lie algebras}\label{section_Explicit classification}

Through this section, we will assume we have fixed a Cartan subalgebra of a simple split (real or complex) Lie algebra  and a set of simple roots of the related root system, labelled as in \cref{se_losdiagramasconcretos}.

Recall that it is possible that $\B_I=\B_J$ happens for some $I\neq J$. We will say that a subset $I\subseteq\{1,\dots,l\}$ is \emph{maximal} describing an inner ideal $B$ if $B=\B_I$ and whenever there is another $J\subseteq\{1,\dots,l\}$ with $B=\B_J$ then $J\subseteq I$. Sometimes it is useful to have this maximal $I$ since, only in that case we can assure that   $\B_I\subseteq \B_J$ implies  $J\subseteq I$ (recall that the converse was always true). We will use this to construct the lattices of inner ideals.  Similarly, we will say that a subset $I\subseteq\{1,\dots,l\}$ is \emph{minimal} describing an inner ideal $B$ if $B=\B_I$ and whenever there is another $J\subseteq\{1,\dots,l\}$ with $B=\B_J$ then $J\supseteq I$. We can always find both a maximal set and a minimal set describing any proper inner ideal. The minimal choice is useful in order to apply \cref{mainth}, since we  have only  to check whether or not the minimal set representing a determined inner  ideal  is  adapted to the Satake diagram.

Our description and classification of  the inner ideals will be up to conjugation, although we will not explicitly recall this every time. 
So ``the  non-zero abelian inner ideals are" should be read as ``up to conjugation, the  non-zero abelian inner ideals are".

\subsection{Type $A_l$}

Consider first the split Lie algebra, whose inner ideals are determined analogously to those ones of its complexified algebra. 
Recall that the set of positive roots is
$$
\Phi^+=\{\a_i+\a_{i+1}+\dots+\a_{j}:1\le i\le j\le l
\}
$$
and the maximal root is $\tilde\a=\a_{1}+\dots+\a_{l}$. 
The  non-zero abelian inner ideals of $\slf_{l+1}(\RR)$ are
$$
\{\B_{\{s,t\}}:1\le s\le t\le l\},
$$
because $\B_{I}=\B_{\{\textrm{min}\, I,\,\textrm{max} \,I\}}$. Here we enclose the possibility $\B_{\{s,s\}}\equiv \B_{\{s\}}$.
The corresponding roots are 
$$
\Phi_{\{s,t\}}=\{\a_{i}+\a_{i+1}+\dots+\a_{s}+\a_{s+1}+\dots+\a_{t}+\a_{t+1}+\dots+\a_{k}:1\leq i\leq s\leq t\leq k\leq l\},
$$
so that $\B_{\{s,t\}}$ has dimension $s(l+1-t)$. 
Since it is conjugated to $ \B_{\{l+1-t,l+1-s\}}$,   the set of  non-zero abelian inner ideals up to conjugation coincides with $\{\B_{\{s,t\}}:1\le s\leq t\leq l,s+t\leq l+1\}$. 
(This set has size $k^2$ if $l=2k-1$ and $k^2+k$ if $l=2k$.) 
The lattice is described  by taking into account that 
$$
\B_{\{s,t\}}\subseteq \B_{\{s',t'\}}\Leftrightarrow s\le s',t\ge t'.
$$
The only minimal abelian inner ideal (unique if we assume it adapted to the Cartan subalgebra and to the fixed ordering) is $\B_{\{1,l\}}=\g_{\tilde\a}$, while  the maximal ones are $\B_{\{k\}}$, with $k\in\{1,\dots,l\}$ (although $\B_{\{k\}}\cong \B_{\{l+1-k\}}$). For the other  non-compact real forms of $\slf_{l+1}(\CC)$, we use the results in the previous section. Thus,

\begin{proposition}
\begin{itemize}
\item There are $p$  non-zero abelian inner ideals of $\suf_{p,l+1-p}$, namely,
$$
\begin{cases}\B_{\{1,l\}}\subset\B_{\{2,l-1\}}\subset\dots\subset\B_{\{p,l+1-p\}}&\textrm{if }2p\leq l,\\
\B_{\{1,l\}}\subset\B_{\{2,l-1\}}\subset\dots\subset\B_{\{p-1,p+1\}}\subset\B_{\{p\}}&\textrm{if }2p=l+1.\end{cases}
$$
\item  If $l $ is odd,  the   non-zero abelian inner ideals of $\slf_m(\mathbb H)$ for $m=\frac{l+1}2$ are:
$$
\{\B_{\{2s,2t\}}:1\le s\le t\le m-1\}.
$$
\end{itemize}
\end{proposition}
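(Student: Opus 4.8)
The plan is to apply Theorem~\ref{mainth} together with the explicit Satake diagrams recalled in \cref{se_losdiagramasconcretos}, reducing the statement to a purely combinatorial check of which subsets $I\subseteq\{1,\dots,l\}$ are adapted. By \cref{mainth} every abelian inner ideal of a non-compact real form of $\slf_{l+1}(\CC)$ is conjugate to some $\mathcal{B}_I$ with $I$ adapted to the corresponding Satake diagram, and the general type-$A_l$ analysis from the split case shows that, up to the action of the diagram automorphism, we may assume $I=\{s,t\}$ with $1\le s\le t\le l$ (possibly $s=t$), since $\Phi_I=\Phi_{\{\min I,\max I\}}$. So the whole argument is: (i) identify, for each real form, the minimal set $\{s,t\}$ describing a given inner ideal, (ii) impose conditions i) and ii) of \cref{de_adapted}, and (iii) quotient by the conjugacy relation $\mathcal{B}_{\{s,t\}}\cong\mathcal{B}_{\{l+1-t,l+1-s\}}$ to get the stated normal forms, recording inclusions via $\mathcal{B}_{\{s,t\}}\subseteq\mathcal{B}_{\{s',t'\}}\iff s\le s',\ t\ge t'$.

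For $\suf_{p,l+1-p}$ with $p\le l+1-p$: the Satake diagram has white nodes exactly at positions $1,\dots,p-1,l+2-p,\dots,l$ when $2p\le l$ (and the two ``ends'' symmetric under the order-two diagram automorphism $\mu\colon i\mapsto l+1-i$), while for $2p=l+1$ the white nodes are $1,\dots,p$ and the diagram has the form in the excerpt with the central node white. Condition i) forces $s,t$ to be white; condition ii), combined with $\bar\alpha_i=\bar\alpha_{\mu(i)}$, forces that if $s$ is white and $s\le p-1$ then $\mu(s)=l+1-s$ must also be in $I$, which by the $\{s,t\}$-reduction means $t=l+1-s$. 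Running $s$ from $1$ up to $p-1$ gives the chain $\mathcal{B}_{\{1,l\}}\subset\mathcal{B}_{\{2,l-1\}}\subset\dots\subset\mathcal{B}_{\{p-1,p+1\}}$; when $2p=l+1$ there is additionally the self-paired central node giving $\mathcal{B}_{\{p\}}$, and when $2p\le l$ the index $p$ is itself white with no arrow (it lies strictly between the two symmetric ends) so $\mathcal{B}_{\{p,l+1-p\}}=\mathcal{B}_{\{p,p\}}$ wait — one must be careful here: for $2p\le l$ the node $p$ need not be white, rather the white right-end starts at $l+2-p$, so the extremal allowed pair is $\{p-1,l-p+2\}$... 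I would instead verify directly from the diagram that exactly $p$ adapted classes arise, matching $\mathcal{B}_{\{i,l+1-i\}}$ for $i=1,\dots,p$ in the first case (with $\mathcal{B}_{\{p,l+1-p\}}$ meaningful since both $p$ and $l+1-p$ are white when $2p\le l$ — note $l+1-p\ge p+1\ge l+2-p$ fails, so actually $l+1-p$ is white iff $l+1-p\ge l+2-p$, i.e. never; hence the top class is $\mathcal{B}_{\{p-1,l+2-p\}}$ and re-indexing gives $p-1$... ). The cleanest route, which I would adopt, is to note that the white nodes paired by $\mu$ come in the pairs $\{i,l+1-i\}$ for $i=1,\dots,p-1$, each forcing the single adapted class $\mathcal{B}_{\{i,l+1-i\}}$, plus in the case $2p=l+1$ the fixed node $p$ giving $\mathcal{B}_{\{p\}}$; counting gives $p-1$ and $p$ classes respectively, so I would re-examine whether the split-off compact direction or the node $p$ in the $2p\le l$ case contributes the missing class — it does, via $I=\{p\}\cup\mu(\{p\})$ when $l+1-p$ is white, which happens precisely when... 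I would resolve this indexing cleanly against the explicit diagram.

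For $\slf_m(\mathbb H)$ with $l=2m-1$ odd: the Satake diagram alternates black, white, black, white, $\dots$, with white nodes at the even positions $2,4,\dots,2m-2$ and no arrows. Condition i) then forces $s$ and $t$ to be even, and condition ii) is automatic (no arrows, and distinct white nodes have distinct restrictions). Reduction to $I=\{s,t\}$ and the conjugacy $\mathcal{B}_{\{2s',2t'\}}\cong\mathcal{B}_{\{l+1-2t',l+1-2s'\}}=\mathcal{B}_{\{2m-2t',2m-2s'\}}$ — here $l+1-2t'=2m-2t'$ is again even, so the involution preserves the ``even'' subfamily and identifies $\{2s',2t'\}$ with $\{2(m-t'),2(m-s')\}$; choosing representatives with $s'\le t'$ and $s'+t'\le m$ would give a smaller list, but the statement instead lists \emph{all} $\mathcal{B}_{\{2s,2t\}}$ with $1\le s\le t\le m-1$, so I would check that these are pairwise non-conjugate, e.g. by dimension $\dim\mathcal{B}_{\{2s,2t\}}=2s(l+1-2t)=2s(2m-2t)$ together with a finer invariant when dimensions coincide, or simply observe that the quaternionic structure obstructs the extra identification present in the split case. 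The main obstacle I anticipate is exactly this last point — correctly pinning down the conjugacy relation among the $\mathcal{B}_I$ for the quaternionic and the $\suf_{p,q}$ forms, i.e. showing that the list given is irredundant (no two entries conjugate) and complete (every adapted $I$ gives an entry on the list), since the naive order-reversing involution from the split case may no longer apply or may act differently once restricted to the adapted subsets.
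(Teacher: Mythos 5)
Your overall strategy --- invoke Theorem~\ref{mainth}, reduce to $I=\{\min I,\max I\}$ via $\Phi_I=\Phi_{\{\min I,\max I\}}$ from the split $A_l$ analysis, and read off which pairs are adapted from the Satake diagram --- is exactly the paper's. But your treatment of the first bullet has a genuine gap that you flag and never close: you have the Satake diagram of $\suf_{p,l+1-p}$ wrong. The white nodes are $\{1,\dots,p\}\cup\{l+1-p,\dots,l\}$, with an arrow joining node $i$ to node $l+1-i$ for every $i\le p$ (when $2p=l+1$ all nodes are white and the arrow at $i=p$ degenerates, since $\mu(p)=p$). You instead take the white nodes to be $\{1,\dots,p-1\}\cup\{l+2-p,\dots,l\}$, which is why your count comes out as $p-1$ in the case $2p\le l$ and you are left hunting for a ``missing class''. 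With the correct diagram the argument is immediate: an adapted $I$ is a nonempty set of white nodes closed under $\mu(i)=l+1-i$, so $s:=\min I\in\{1,\dots,p\}$ forces $\max I=l+1-s$ (indeed $l+1-s\in I$ gives $\max I\ge l+1-s$, while $l+1-\max I\in I$ gives $\max I\le l+1-s$), yielding exactly the $p$ classes $\mathcal{B}_{\{s,l+1-s\}}$ for $s=1,\dots,p$, which form the stated chain, with $\mathcal{B}_{\{p,l+1-p\}}=\mathcal{B}_{\{p\}}$ when $2p=l+1$. A proof that ends with ``I would resolve this indexing cleanly against the explicit diagram'' has not resolved it, and the unresolved point is precisely the content of the first bullet.

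The $\slf_m(\mathbb H)$ part is fine as far as identifying the adapted sets (white nodes $\{2,4,\dots,l-1\}$, no arrows, so $s,t$ must be even and condition ii) of \cref{de_adapted} is vacuous), but the final task you set yourself --- showing that the $\binom{m}{2}$ listed ideals are pairwise non-conjugate --- is both unnecessary and false. The stated convention in \cref{section_Explicit classification} is that such lists are to be read ``up to conjugation, the inner ideals are \ldots'', i.e.\ the list must be complete but need not be irredundant, and the text immediately following the proposition states explicitly that some of the listed $\mathcal{B}_{\{2s,2t\}}$ are conjugate to one another. So that verification should be dropped, not attempted; your suggested invariants would in fact fail to separate $\mathcal{B}_{\{2s,2t\}}$ from $\mathcal{B}_{\{2(m-t),2(m-s)\}}$.
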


\begin{proof}
Simply take into account that the Satake diagram of $\suf_{p,l+1-p}$ ($p\ge1$) has white nodes $\{1,\dots,p\}\cup\{l+1-p,\dots,l\}$ and  arrows connecting the $i$th node with the $(l+1-i)$th node for all $i\le p$; while $\slf_m(\mathbb H)$ has white nodes $\{2,4,\dots,l-1\}$ and no arrows. (The compact form is $\suf_{0,l+1}$, without abelian inner ideals, corresponding to $p=0$.)
\end{proof}

In particular, $\slf_m(\mathbb H)$ has no extremal points (more precisely, its minimal inner ideals have dimension 4) and the longest chain of proper inner   ideals has length $m-1$ (coinciding with the real rank of the algebra), for instance, $\B_{\{2,l-1\}}\subset \B_{\{4,l-1\}}\subset \dots \subset \B_{\{l-1,l-1\}}$.
In the list of the  abelian inner ideals of $\slf_m(\mathbb H)$  above, there are $\binom{m}2$    inner ideals, but again some of them are conjugated. 

\subsection{Type $B_l$, $l\ge2$.}  There are $2l-2$  non-zero abelian inner ideals of the split Lie algebra $\sof_{l,l+1}$ (as in the complex case). Recall that the sets of short and long positive roots are, respectively,
$$
\begin{array}{l}
\Phi_c^+=  \{ \a_{i}+\dots+\a_{l}:1\le i\le l  \}, \\
\Phi_l^+=  \{ \a_{i}+\dots+ \a_{j-1}, \a_{i}+\dots+\a_{j-1}+2\a_{j}+\dots+2 \a_{l} :1\le i<j\le l \}   .
\end{array}
$$ 
The maximal root is $\tilde\a=\a_{1}+2\a_{2}\dots+2\a_{l}$. The abelian inner ideals are 
\begin{equation}\label{eq_idsB}
\{\B_{\{k\}}  : 1\le k\le l\}\cup\{ \B_{\{1,k\}}:3\le k\le l\}  ,
\end{equation}
because $\B_I=\B_{\{2\}}=\g_{\tilde\a} $ if $2\in I$, $\B_I=\B_{\{\textrm{min}\, I \}} $ if $1\notin I$ and 
$\B_I=\B_{\{1,\textrm{min}\, (I\setminus\{1\}) \}} $ if $1\in I$ and $2\notin I$.  These are precisely the minimal choices for representing subsets.
The dimensions are $\dim \B_{\{1,k\}}=k-1$ if $k\ne 1$, $\dim \B_{\{1\}}=2l-1$ and
$\dim \B_{\{k\}}=\binom{k}2$ if $k\geq 2$, because, for $k\ne1$,
$$\begin{array}{l}
\Phi_{\{k\}}=\{\a_{i}+\dots+\a_{j-1}+2\a_{j}+\dots+2 \a_{l}:1\le i<j\le k \}=\Phi_{\{k,k+1,\dots,l\}},\vspace{2pt}\\
\Phi_{\{1\}}=\{\a_{1}+\dots+\a_{j-1}+2\a_{j}+\dots+2 \a_{l},\a_{1}+\dots+\a_{j},\a_1:1<j\le l \},\vspace{2pt}\\
\Phi_{\{1,k\}}=\{\a_{1}+\dots+\a_{j-1}+2\a_{j}+\dots+2 \a_{l}:1<j\le k \}=\Phi_{\{1,k,k+1,\dots,l\}}.
\end{array}
$$
The  other  non-compact real forms of $\sof_{2l+1}(\CC)$ have the following inner ideals.

\begin{proposition} 
\begin{itemize}
\item 
There is only one (up to conjugation) non-zero abelian inner ideal of $\sof_{1,2l}$, namely $\B_{\{1\}}$ (of dimension $2l-1$). In particular it has no extremal elements.
\item 
There are $2p-2$  non-zero abelian inner ideals of $\sof_{p,2l+1-p}$ if $2\le  p\le l$, namely,
$
\B_{\{2\}}    \subset \B_{\{1\}} $ if $p=2$,
and
$$
\{\B_{\{k\}} :1\le k\le p\}\cup \{\B_{\{1,k\}} :3\le k\le p\}  
$$
if $p\ge3$.
Thus a chain of maximal length of non-zero abelian inner ideals has length $p$ (coinciding with the real rank of the algebra)
$$
\B_{\{2\}} \subset \B_{\{1,3\}} \subset\B_{\{1,4\}} \subset\dots\subset \B_{\{1,p\}} \subset \B_{\{1\}} .
$$
\end{itemize}
\end{proposition}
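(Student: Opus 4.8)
The plan is to combine \cref{mainth} with the Satake diagrams listed in \cref{se_losdiagramasconcretos} and with the combinatorial reductions of the $\mathcal B_I$ already established for the split form $\sof_{l,l+1}$. The crucial remark is that the set $\Phi_I$ of \cref{eq_fiI}, and hence the inner ideal $\mathcal B_I=\oplus\{\g_{\bar\alpha}:\alpha\in\Phi_I\}$, is governed entirely by $\Phi$ and $\Delta$, not by the chosen real form; so the reductions $\mathcal B_I=\mathcal B_{\{2\}}$ if $2\in I$, $\mathcal B_I=\mathcal B_{\{\min I\}}$ if $1\notin I$, and $\mathcal B_I=\mathcal B_{\{1,\min(I\setminus\{1\})\}}$ if $1\in I\neq\{1\}$ and $2\notin I$, together with the dimension count $\dim_{\RR}\mathcal B_I=\dim_{\CC}B_I=|\Phi_I|$ coming from \cref{pr_BI}(c), are valid for every real form of $\sof_{2l+1}(\CC)$.

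For $\sof_{p,2l+1-p}$, of type $B_l$ with $1\le p\le l$, the Satake diagram has white nodes exactly $\{1,\dots,p\}$ and no arrows; hence by \cref{de_adapted} a non-empty $I\subseteq\{1,\dots,l\}$ is adapted to the Satake diagram if and only if $\emptyset\neq I\subseteq\{1,\dots,p\}$. When $p=1$ the only adapted subset is $\{1\}$, so \cref{mainth} yields that the unique non-zero abelian inner ideal of $\sof_{1,2l}$ up to conjugation is $\mathcal B_{\{1\}}$, which is non-zero since $\dim\mathcal B_{\{1\}}=|\Phi_{\{1\}}|=2l-1\ge 3$. The same estimate rules out extremal elements in $\sof_{1,2l}$: an extremal $e$ would make $\langle e\rangle$ a one-dimensional abelian inner ideal, forcing $\langle e\rangle$ to be conjugate to $\mathcal B_{\{1\}}$, which is impossible as $\dim\mathcal B_{\{1\}}=2l-1>1$. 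When $2\le p\le l$, applying the three reductions to each adapted $I$ (and noting $\mathcal B_{\{1,2\}}=\mathcal B_{\{2\}}$) shows that every non-zero abelian inner ideal of $\sof_{p,2l+1-p}$ is conjugate to one of the $2p-2$ subspaces in $\{\mathcal B_{\{k\}}:1\le k\le p\}\cup\{\mathcal B_{\{1,k\}}:3\le k\le p\}$, which for $p=2$ is just $\{\mathcal B_{\{1\}},\mathcal B_{\{2\}}\}$.

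To see that the count is \emph{exact}, i.e. that these $2p-2$ inner ideals are pairwise non-conjugate, I would complexify: by \cref{pr_BI}(c) one has $(\mathcal B_I)^{\CC}=B_I$, so any $\varphi\in\Aut(\sof_{p,2l+1-p})$ sending $\mathcal B_I$ to $\mathcal B_J$ induces $\varphi^{\CC}\in\Aut(\sof_{2l+1}(\CC))$ sending $B_I$ to $B_J$; but $\{B_{\{k\}}:1\le k\le l\}\cup\{B_{\{1,k\}}:3\le k\le l\}$ is precisely the set of $2l-2$ pairwise non-conjugate representatives of the non-zero abelian inner ideals of $\sof_{2l+1}(\CC)$ recalled in \cref{se_background_inner} (following \cite{Draper2012}), so distinct members of our list are already non-conjugate in the complexification. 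For the chain of maximal length I would use that inclusions lift from the complex lattice ($I\subseteq J\Rightarrow B_J\subseteq B_I\Rightarrow\mathcal B_J\subseteq\mathcal B_I$) together with the fact that $\mathcal B_{\{2\}}=\g_{\bar{\tilde\alpha}}$ is the unique minimal one (since $\tilde\alpha\in\Phi_I$ for all $I$); combined with $\Phi_{\{1,k\}}=\Phi_{\{1,k,k+1,\dots,l\}}$ and the strictly increasing dimensions $1,2,\dots,p-1,2l-1$, this produces the strictly increasing chain $\mathcal B_{\{2\}}\subset\mathcal B_{\{1,3\}}\subset\dots\subset\mathcal B_{\{1,p\}}\subset\mathcal B_{\{1\}}$ of length $p$, equal to the real rank. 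That no chain is longer is a finite check in the (completely described) sublattice of adapted subsets, for which it suffices to observe that $\mathcal B_{\{1\}}$ and the $\mathcal B_{\{k\}}$ with $k\ge 3$ are pairwise incomparable.

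The step I expect to require the most care is the completeness of the reduction, namely verifying that no adapted $I$ produces a $\mathcal B_I$ outside the stated list; this is pure bookkeeping once the three reductions are granted. The transfer of non-conjugacy from $\sof_{2l+1}(\CC)$ and the chain-length verification are then routine.
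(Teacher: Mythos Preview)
Your approach is essentially the same as the paper's: the paper's entire proof is the single sentence ``The Satake diagram of $\sof_{p,2l+1-p}$ has white nodes just $\{1,\dots,p\}$,'' leaving the application of \cref{mainth} and the split-case reductions implicit, whereas you spell these steps out and additionally verify non-conjugacy (via complexification) and the chain-length claim. One small slip: in your final paragraph you write that ``$\mathcal B_{\{1\}}$ and the $\mathcal B_{\{k\}}$ with $k\ge 3$ are pairwise incomparable,'' but in fact the $\mathcal B_{\{k\}}$ for $k\ge3$ form a chain among themselves; what you need (and what suffices) is only that $\mathcal B_{\{1\}}$ is incomparable with each $\mathcal B_{\{k\}}$ for $k\ge3$, together with the observation that once a chain passes from the $\mathcal B_{\{1,k\}}$-side to the $\mathcal B_{\{k\}}$-side it cannot return.
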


\begin{proof}
The Satake diagram of $\sof_{p,2l+1-p}$ has white nodes just $\{1,\dots,p\}$.
\end{proof}

%%%%%%%%%%%%%%%%%%%%%%%%%%%%%%%%%%%%%%%%%%%%%%%%%%%%%%%%

\subsection{Type $C_l$, $l\ge3$.} There are $l$  non-zero abelian inner ideals of the split Lie algebra $\spf_{2l}(\RR)$. Recall that the sets of short and long positive roots are, respectively,
$$
\begin{array}{l}
\Phi_c^+=  \{ \a_{i}+\dots+\a_{j-1},\a_{i}+\dots+\a_{j-1}+2\a_{j}+\dots+2\a_{l-1}+\a_{l}:i<j\le l  \}, \\
\Phi_l^+=  \{ 2\a_{i}+\dots+2\a_{l-1}+\a_{l} :1\le i\le l \}   .
\end{array}
$$ 
(The notation here is slightly confusing, for instance the long root for $i=l$ is $\a_l$.)
The maximal root is $\tilde\a=2\a_{1}+\dots+2\a_{l-1}+\a_{l}$. It is clear that $\B_I=\B_{\{\textrm{min}\, I \}} $, so that
the abelian inner ideals are just
$\{\B_{\{k\}}:1\le k\le l\}$, where $\B_{\{k\}}$ has dimension $\binom{k+1}{2}$, since
$$
\Phi_{\{k\}}=\{ 2\a_{i}+\dots+2\a_{l-1}+\a_{l} :1\le i\le k \} \cup\{\a_{i}+\dots+\a_{j-1}+2\a_{j}+\dots+2\a_{l-1}+\a_{l}:1\leq i<j\leq k \} .
$$ 
Thus we have a chain $\B_{\{1\}}=\g_{\tilde\a}\subset\B_{\{2\}}\subset\dots\subset\B_{\{l\}}$ of inner ideals of maximal length, $l$. In particular $\spf_{2l}(\RR)$ has extremal elements (but no inner ideals of dimension 2). Let us apply the results in the above section to the remaining  symplectic real algebras.

\begin{proposition}
There are $p$  non-zero abelian inner ideals of the   Lie algebra $\spf_{p,l-p}$ ($2p\le l$), which in fact form a (maximal) chain, namely,
$$
\B_{\{2\}}\subset\B_{\{4\}}\subset\dots\subset\B_{\{2p\}}.
$$
\end{proposition}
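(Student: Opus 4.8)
The plan is to read off everything from the Satake diagram of $\spf_{p,l-p}$ together with the already-established description of the abelian inner ideals of the split form $\spf_{2l}(\RR)$ and the master result \cref{mainth}. From \cref{se_losdiagramasconcretos}, the Satake diagram of $\spf_{p,l-p}$ (with $2p\le l$, which is $p<q$) has white nodes exactly at the even positions $\{2,4,\dots,2p\}$, all other nodes black, and no arrows. So the sets of indices adapted to the Satake diagram (Definition~\ref{de_adapted}) are precisely the non-empty subsets of $\{2,4,\dots,2p\}$: condition ii) is vacuous since $\alpha_i\vert_{\mathfrak a}=\alpha_j\vert_{\mathfrak a}$ only for $i=j$ (no arrows, and black nodes restrict to $0$), and condition i) forces every element of the set to lie in $\{2,4,\dots,2p\}$.

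First I would recall, from the $C_l$ discussion just above, that for the split algebra every abelian inner ideal is $\B_{\{k\}}$ for a unique $k$, because $\B_I=\B_{\{\min I\}}$; in particular the minimal subset describing $\B_{\{k\}}$ is $\{k\}$ itself. Next, by \cref{mainth}, any abelian inner ideal $B$ of $\g=\spf_{p,l-p}$ is conjugate to some $\mathcal B_I$ with $I$ adapted to the Satake diagram; by the previous paragraph this means $I\subseteq\{2,4,\dots,2p\}$, and since $(\mathcal B_I)^\CC=B_I=B_{\{\min I\}}$, up to conjugation we may take $I=\{2k\}$ for some $1\le k\le p$. Conversely, \cref{pr_BI} guarantees that each such $\mathcal B_{\{2k\}}$ is indeed an abelian inner ideal of $\g$, and these are pairwise non-conjugate because their complexifications $B_{\{2k\}}$ have pairwise distinct dimensions $\binom{2k+1}{2}$ (as computed for type $C_l$). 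Hence the non-zero abelian inner ideals of $\g$ are exactly $\mathcal B_{\{2\}},\mathcal B_{\{4\}},\dots,\mathcal B_{\{2p\}}$, giving $p$ of them.

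Finally, the chain structure follows from the inclusion rule $B_I\subseteq B_J$ whenever $J\subseteq I$ (so $B_{\{2\}}\supseteq B_{\{4\}}\supseteq\dots$, hence $\B_{\{2\}}\subseteq\B_{\{4\}}\subseteq\dots$ after the correct reading of indices in type $C$: $\B_{\{k\}}\subseteq\B_{\{k+1\}}$, exactly as in the split case $\B_{\{1\}}\subset\dots\subset\B_{\{l\}}$), which descends to the real forms $\mathcal B_{\{2k\}}=B_{\{2k\}}\cap\g$; so $\mathcal B_{\{2\}}\subset\mathcal B_{\{4\}}\subset\dots\subset\mathcal B_{\{2p\}}$ is a chain, necessarily maximal since it already contains all of them. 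I do not expect any real obstacle here: the only point requiring a little care is checking that no two of the $\mathcal B_{\{2k\}}$ coincide or become conjugate, which is handled cleanly by the dimension count over $\CC$, and confirming that the adapted subsets are exactly the subsets of $\{2,4,\dots,2p\}$, which is immediate from the shape of the Satake diagram.
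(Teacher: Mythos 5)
Your proof is correct and is essentially the paper's own argument: the paper's proof consists of the single observation that the Satake diagram of $\spf_{p,l-p}$ has white nodes $\{2,4,\dots,2p\}$ and no arrows, delegating everything else to \cref{mainth}, \cref{pr_BI} and the split type-$C_l$ analysis ($\B_I=\B_{\{\min I\}}$, dimensions $\binom{k+1}{2}$, the chain $\B_{\{1\}}\subset\dots\subset\B_{\{l\}}$), and you have simply written out those delegated steps. The only wobble is the parenthetical ``$B_{\{2\}}\supseteq B_{\{4\}}\supseteq\dots$'', which misapplies the containment rule (the correct route is $B_{\{2\}}=B_{\{2,4,\dots,l\}}\subseteq B_{\{4,\dots,l\}}=B_{\{4\}}$), but you immediately recover the right inclusions via the explicit type-$C$ chain, so the conclusion stands.
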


\begin{proof}
The Satake diagram of $\spf_{p,l-p}$ has white nodes $\{2,4,\dots,2p\}$ if $p\ne0$ (none if $p=0$). 
\end{proof}

The word \emph{maximal} here simply means of maximal length. In particular $\spf_{p,l-p}$ does not possess  extremal elements for any $p$, and $\spf_{1,l-1}$  only has one non-zero abelian inner ideal up to conjugation.

%%%%%%%%%%%%%%%%%%%%%%%%%%%%%%%%%%%%%%%%%%%%%%%%%%%%%%%%
\subsection{Type $D_l$, $l\ge 4$.} 
\subsubsection{Type $D_l$, $l>4$.} 

There are $2l-2$  non-zero abelian inner ideals of the split Lie algebra $\sof_{l,l}$, with $l>4$. 
Recall that the set of positive roots is
\begin{align*}
\Phi^+=&\{\a_{i}+\dots+\a_{j-1}+2\a_{j}+ \dots+2\a_{l-2}+\a_{l-1}+\a_{l}:1\leq i<j\le l-2\}\cup\{\a_{l}\}\\
&\cup\{\a_{i}+\dots+\a_{j}:1\leq i\le j\le l-1\} \cup \{\a_{i}+\dots+\a_{l},\a_{i}+\dots+\a_{l-2}+\a_{l}:1\leq i\leq l-2\}  
\end{align*}
and the maximal root is $\tilde\a=\a_{1}+2\a_{2}+\dots+2\a_{l-2}+\a_{l-1}+\a_{l}$. 
A set of $2l-2$ representatives of the conjugacy classes of the  non-zero abelian inner ideals is 
\begin{equation}\label{eq_idsD}
\{\B_{\{k\}},\B_{\{1,k\}}:3\leq k\le l-1\}\cup \{\B_{\{1\}},\B_{\{2\}} ,\B_{\{l-1,l\}}, \B_{\{1,l-1,l\}} \}.
\end{equation}
To be precise, for $2\le k\le l-2$,
\begin{itemize}
\item $\Phi_{\{k\}} =\Phi_{\{k,\dots, l\}} = \{\a_{i}+\dots+\a_{j-1}+2\a_{j}+ \dots+2\a_{l-2}+\a_{l-1}+\a_{l}:1\leq i<j\le k\}$ has $\binom k2$ elements, (coinciding with $\g_{\tilde\a}$ for $k=2$),
\item $\Phi_{\{1,k\}} =\Phi_{\{1,k,\dots, l\}} = \{\a_{1}+\dots+\a_{j-1}+2\a_{j}+ \dots+2\a_{l-2}+\a_{l-1}+\a_{l}:1<j\le k\}$ has $  k-1$ elements,
\item $\Phi_{\{1\}} =\Phi_{\{1,l-2\}}\cup \{\a_1+\dots+\a_j:1\le j\le l\}\cup\{\a_1+\dots+\a_{l-2}+\a_l \}$ has $2l-2$ elements and it corresponds with a maximal inner ideal,
\item $\Phi_{\{1,l-1\}} =\Phi_{\{1,l-2\}}\cup \{\a_1+\dots+\a_{l-1},\a_1+\dots+\a_l\}$ has $l-1$ elements,
\item $\Phi_{\{1,l-1,l\}} =\Phi_{\{1,l-2\}}\cup \{\a_1+\dots+\a_l\}$ has $l-2$ elements,
\item $\Phi_{\{l-1,l\}} =\Phi_{\{l-2\}}\cup \{\a_i+\dots+\a_l:i\le l-2\}$ has $\binom{l-1}2$ elements, and
\item $\Phi_{\{l-1\}} =\Phi_{\{l-1,l\}}\cup \{\a_i+\dots+\a_{l-1}:i\le l-1\}$ has $\binom{l}2$ elements.
\end{itemize}

Here $\B_{\{1\}}$ and $\B_{\{l-1\}}\cong \B_{\{l\}}$ are the maximal inner ideals, while $\B_{\{1,2\}}\cong\B_{\{2\}}=\g_{\tilde\a}$ is the minimal one. Note that also $\B_{\{1,l-1\}}\cong \B_{\{1,l\}}$. The longest chain of proper inner ideals has length $l$. Note that, for $3\le k\le k'\le l-2$, we have 
$\B_{\{1,k\}}\subseteq \B_{\{1, k'\}}$, $ \B_{\{1,k\}}\subseteq \B_{\{ k'\}}$ and $\B_{\{k\}}\subseteq \B_{\{ k'\}}$.

%%%%%%%%%%%%%%%%%%%%%%%%%%%%%%%%%%%%%%%%%%%

\begin{proposition}\label{prop_tipoD}
\begin{itemize}
\item  There are $2p-2$  non-zero abelian inner ideals of $\sof_{p,2l-p}$ if $2\le  p\le l$  and $1$ if $p=1$, namely,
\begin{itemize}
\item  $\{\B_{\{1\}}\} $ if $p=1$;
\item  $\{\B_{\{1\}},\B_{\{2\}}\} $ if $p=2$;

\item  $\{\B_{\{k\}} :1\le k\le p\}\cup \{\B_{\{1,k\}} :3\le k\le p\}$ if $3\le  p\le l-2$;
\item   $\{\B_{\{k\}} :1\le k\le l-2\}\cup \{\B_{\{1,k\}} :3\le k\le l-2\}\cup \{\B_{\{l-1,l\}},\B_{\{1,l-1,l\}}\} $ if $p=l-1$.
\end{itemize}

\noindent
(If $p=0$ the algebra is compact and it does not contain any  non-zero abelian inner ideal, and, if $p=l$ the algebra is the above considered split algebra). 
\item  If $l $ is even (respectively odd), there are $\frac{l}2$ (respectively $\frac{l-1}2$)  non-zero abelian inner ideals of $\mathfrak{u}^*_l(\mathbb H)$, namely:
\begin{itemize}
\item  $\B_{\{2\}}\subset \B_{\{4\}}\subset\dots \subset\B_{\{l-2\}}\subset \B_{\{l\}} $ if $l$ is even;

\item   $\B_{\{2\}}\subset \B_{\{4\}}\subset\dots\subset \B_{\{l-3\}}\subset \B_{\{l-1,l\}} $ if $l$ is odd.
\end{itemize} 
\end{itemize}
\end{proposition}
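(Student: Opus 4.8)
The plan is to specialise \cref{mainth} and \cref{pr_BI}: every non-zero abelian inner ideal of $\g$ is conjugate to $\mathcal B_I$ for some non-empty $I$ adapted to the Satake diagram of $\g$, every such $\mathcal B_I$ is an abelian inner ideal, and $(\mathcal B_I)^\CC=B_I$, so $\dim\mathcal B_I=|\Phi_I|$. As observed at the start of this section, it is enough to decide, for each real form, which minimal representing sets of the complex abelian inner ideals of $\sof_{2l}(\CC)$ are adapted; by the analysis of $\sof_{l,l}$ above (together with the $\a_{l-1}\leftrightarrow\a_l$ diagram symmetry) these minimal sets are $\{k\}$ for $1\le k\le l$, $\{1,k\}$ for $3\le k\le l$, $\{l-1,l\}$ and $\{1,l-1,l\}$, the only complex conjugacies among the corresponding $\mathcal B_I$ being the spinor swaps $\mathcal B_{\{l\}}\cong\mathcal B_{\{l-1\}}$, $\mathcal B_{\{1,l\}}\cong\mathcal B_{\{1,l-1\}}$, and the dimensions $|\Phi_I|$ being those tabulated there. (When $p=0$ the Satake diagram has no white node, hence no non-empty adapted set and no non-zero abelian inner ideal; the case $p=l$ is the split algebra already treated.)

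For $\sof_{p,2l-p}$ with $1\le p\le l-2$ the white nodes are exactly $\{1,\dots,p\}$ and there are no arrows, so the adapted sets are precisely the non-empty subsets of $\{1,\dots,p\}$; among the minimal sets above this leaves $\{k\}$ with $1\le k\le p$ and, when $p\ge3$, also $\{1,k\}$ with $3\le k\le p$ — giving one inner ideal for $p=1$, the pair $\{\mathcal B_{\{1\}},\mathcal B_{\{2\}}\}$ for $p=2$, and $\{\mathcal B_{\{k\}}:1\le k\le p\}\cup\{\mathcal B_{\{1,k\}}:3\le k\le p\}$ of size $2p-2$ for $p\ge3$. For $p=l-1$ all $l$ nodes are white but $\a_{l-1},\a_l$ are joined by an arrow, so condition ii) of \cref{de_adapted} forces $l-1\in I\Leftrightarrow l\in I$: now $\{l-1\},\{l\},\{1,l-1\},\{1,l\}$ are not adapted while $\{l-1,l\}$ and $\{1,l-1,l\}$ are, leaving $\{\mathcal B_{\{k\}}:1\le k\le l-2\}\cup\{\mathcal B_{\{1,k\}}:3\le k\le l-2\}\cup\{\mathcal B_{\{l-1,l\}},\mathcal B_{\{1,l-1,l\}}\}$, again $2(l-1)-2$ of them. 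For $\mathfrak{u}^*_l(\mathbb H)$ one reads from the Satake diagrams of \cref{se_losdiagramasconcretos} that $\a_1$ is black in both parities, so no adapted set contains $1$; for $l$ even the white nodes are $\{2,4,\dots,l-2,l\}$ with no arrows, giving adapted minimal sets $\{2\},\{4\},\dots,\{l-2\},\{l\}$; for $l$ odd the white nodes are $\{2,4,\dots,l-3\}\cup\{l-1,l\}$ with an arrow at the fork, so $\{l-1\}$ and $\{l\}$ are not adapted but $\{l-1,l\}$ is, giving $\{2\},\{4\},\dots,\{l-3\},\{l-1,l\}$. This produces the stated lists, of sizes $l/2$ and $(l-1)/2$.

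Finally one verifies that the listed ideals are pairwise non-conjugate and, for $\mathfrak{u}^*_l(\mathbb H)$, that they form the displayed chains. A conjugacy in $\g$ induces one in $\g^\CC$, and since $\dim\mathcal B_I=|\Phi_I|$ the dimensions already separate all but a few coincident pairs (for instance $\mathcal B_{\{3\}}$ and $\mathcal B_{\{1,4\}}$, both of dimension $3$, or $\mathcal B_{\{1\}}$ and $\mathcal B_{\{l-1,l\}}$ when $l=6$); each such pair opposes a complex maximal abelian inner ideal $\mathcal B_{\{k\}}$ to one properly contained in some $\mathcal B_{\{i\}}$, hence cannot be conjugate, and the outer automorphism $\a_{l-1}\leftrightarrow\a_l$, when it lies in $\Aut(\g)$, fixes each of the remaining sets $I$. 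The chains $\mathcal B_{\{2\}}\subset\mathcal B_{\{4\}}\subset\dots$ terminating in $\mathcal B_{\{l-2\}}\subset\mathcal B_{\{l\}}$ ($l$ even) or $\mathcal B_{\{l-3\}}\subset\mathcal B_{\{l-1,l\}}$ ($l$ odd) follow by intersecting with $\g$ the complex inclusions $B_{\{k\}}\subseteq B_{\{k'\}}$ ($k\le k'$) and $B_{\{l-2\}}\subseteq B_{\{l-1,l\}}\subseteq B_{\{l\}}$ read off from the split $D_l$ analysis, and are strict because complexification preserves dimension; their length equals $\dim\mathfrak a$, the real rank. The main obstacle is purely the bookkeeping at the $D_l$ fork — correctly identifying which spinor node is white for $\mathfrak{u}^*_l(\mathbb H)$, and the presence of the arrow there for $\sof_{l-1,l+1}$ and for $\mathfrak{u}^*_l(\mathbb H)$ with $l$ odd — after which everything reduces to the combinatorics above.
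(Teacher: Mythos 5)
Your proposal is correct and follows the paper's argument exactly: the paper's proof consists only of reading off the white nodes and arrows of the relevant Satake diagrams and then filtering the minimal representing sets from the split $D_l$ list through Definition~\ref{de_adapted}, which is precisely what you do (your extra remarks on inclusions and dimensions are already contained in the split-case discussion preceding the proposition). The only nit: in your non-conjugacy paragraph, for the pair $\mathcal B_{\{3\}}$, $\mathcal B_{\{1,4\}}$ neither ideal is maximal, so the ``maximal versus properly contained'' phrasing is off — but this is harmless, since pairwise non-conjugacy already follows from your first observation that a real conjugacy complexifies and the listed sets represent distinct complex conjugacy classes.
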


\begin{proof}
The Satake diagram of $\sof_{p,2l-p}$ has 
\begin{itemize}
\item white nodes $\{1,\dots,p\}$ and no arrows, if $1\le p\le l-2$, 
\item white nodes $\{1,\dots,l\}$ and an arrow connecting the $l-1$th node with the $l$th node, if $p=l-1$; 
\end{itemize}
while the Satake diagram of  $\mathfrak{u}^*_l(\mathbb H)$ has 
\begin{itemize}
\item white nodes $\{2,4,\dots,l\}$ and no arrows, if $l$ is even; 
\item white nodes $\{2,4,\dots,l-1\}\cup\{l\}$ and an arrow connecting the $l-1$th node with the $l$th node, if $l$ is odd. 
\end{itemize}
\end{proof}
In particular, neither $\mathfrak{u}^*_l(\mathbb H)$ nor $\sof_{1,2l-1}$ has extremal elements.

\subsubsection{Type $D_4$.} 
As there are more automorphisms of the algebra of type $D_4$,   there are less inner ideals. To be precise, there are 4 inner ideals up to conjugation:
$$
\B_{\{1,2,3,4\}}\subset \B_{\{1,3,4\}}\subset \B_{\{1,4\}}\subset \B_{\{1\}},
$$
of dimensions $1$, $2$, $3$ and $6$ respectively.  (The map $\alpha_1\mapsto \alpha_3\mapsto \alpha_4\mapsto \alpha_1$ fixing the $2^{\mathrm{nd}}$-node is a diagram automorphism.) The other  non-compact real forms work as in \cref{prop_tipoD}, namely, $\sof_{p,8-p}$ with a chain of maximal length $p$ ($p=0$ compact, and $p=4$ split):
\begin{itemize}
\item Only $
  \B_{\{1\}}
$ up to conjugation for $\sof_{1,7}$, an inner ideal of dimension 6;
\item $
\B_{\{1,2,3,4\}} \subset \B_{\{1\}}
$ for $\sof_{2,6}$, inner ideals of dimensions $1$ and $6$;
\item 
$
\B_{\{1,2,3,4\}}\subset \B_{\{1,3,4\}}\subset   \B_{\{1\}}
$ for $\sof_{3,5}$, inner ideals of dimensions $1$, $2$  and $6$.
\end{itemize}
%%%%%%%%%%%%%%%%%%%%%%%%%%%%%%%%%%%%%%%%%%%%%%%%%%%%%%%%
\subsection{Type $E_6$.} 
The maximal root is $\a_1+2\a_2+2\a_3+3\a_4+2\a_5+\a_6$.
There are $7$  non-zero abelian inner ideals of the split algebra $\mathfrak{e}_{6,6}$, given by
\begin{equation}\label{eq_idsE6}
\begin{aligned}
\B_{\{ 1,2,3,4,5,6 \}}=\B_{\{ 2 \}}=\g_{\tilde\a},\qquad 
&\B_{\{ 1,3,4,5,6 \}}=\B_{\{ 4 \}},\qquad 
&&\B_{\{ 1,3,5,6 \}}=\B_{\{  3,5\}},\\
\B_{\{ 1,3,6 \}}=\B_{\{ 3,6 \}}\cong \B_{\{ 1,5 \}},\qquad 
&\B_{\{ 1,3 \}}=\B_{\{  3\}}\cong \B_{\{ 5 \}},\qquad 
&&\B_{\{1,6  \}} ,\qquad 
\B_{\{ 1 \}}\cong \B_{\{ 6 \}} ;
\end{aligned}
\end{equation} 
with dimensions $1$, $2$, $3$, $4$, $5$, $8$ and $16$, respectively. Here we have to take care with the order 2 automorphism of the Dynkin diagram. 
We have chosen, for each inner ideal $\B$, the set $I\subseteq\{1,\dots,6\}$ to the left (right, respectively)   maximal (minimal, respectively)  among the ones satisfying $\B=\B_I$. 
Recall that the choice of a maximal $I$ helps us to find a chain of inner ideals of maximal length, while a minimal $I$ gives us 
  our main tool to know if a concrete real form possesses such inner ideal: it reduces  to check if $I$ is or is not adapted to the Satake diagram.

\begin{proposition}\label{pr_idsE6}
\begin{itemize}
\item  There are $4$  non-zero abelian inner ideals of the   Lie algebra $\mathfrak{e}_{6,2}$, of dimensions $1$, $2$, $3$ and $8$, namely, $\B_{\{2\}}\subset \B_{\{4\}}\subset \B_{\{3,5\}}\subset  \B_{\{1,6\}}$.
\item There are $2$  non-zero abelian inner ideals of the Lie algebra $\mathfrak{e}_{6,-14}$, of dimensions $1$ and $8$, namely, $\B_{\{2\}}\subset \B_{\{1,6\}}$.
\item There are $2$  non-zero abelian inner ideals of the Lie algebra $\mathfrak{e}_{6,-26}$, of dimensions $8$ and $16$, namely,   $\B_{\{1,6\}}\subset \B_{\{1\}}$.
\end{itemize}
\end{proposition}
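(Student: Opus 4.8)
The plan is to specialize \cref{mainth} together with \cref{pr_BI} to each of the three Satake diagrams listed in \cref{se_losdiagramasconcretos}. By \cref{mainth}, the non-zero abelian inner ideals of a real form $\mathfrak g$ of $\mathfrak e_6(\CC)$ are, up to conjugation, exactly the $\B_I$ with $\emptyset\ne I\subseteq\{1,\dots,6\}$ adapted to the Satake diagram of $\mathfrak g$; moreover $\dim_\RR\B_I=\dim_\CC B_I=|\Phi_I|$ by \cref{pr_BI}, a number read off the complex list \eqref{eq_idsE6}, and $B_J\subseteq B_I$ (hence $\B_J\subseteq\B_I$) whenever $I\subseteq J$. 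Since $\bar\alpha_i=0$ happens exactly for the black nodes and $\bar\alpha_i=\bar\alpha_j$ with $i\ne j$ exactly when $i,j$ are white and joined by an arrow, condition i) of \cref{de_adapted} says that $I$ consists of white nodes, and condition ii) that $I$ is stable under the involution $\mu$ of the Dynkin diagram fixing the Satake diagram. So for each real form I would (a) list the non-empty $\mu$-stable sets of white nodes, (b) collapse each $B_I$ onto one of the seven inner ideals of \eqref{eq_idsE6}, using that $\tilde\alpha$ has coefficient $m_i$ on every $\alpha_i$ (so $\g_{\tilde\alpha}\subseteq B_I$ always) together with the coincidences recorded there, and (c) read off the inclusions among the survivors from the maximal representatives in \eqref{eq_idsE6}.

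For $\mathfrak e_{6,2}$ all six nodes are white and $\mu$ interchanges $1\leftrightarrow 6$ and $3\leftrightarrow 5$ while fixing $2$ and $4$, so the adapted sets are the non-empty unions of the orbits $\{1,6\},\{3,5\},\{2\},\{4\}$. Going through these: if $2\in I$ then $\Phi_I\subseteq\Phi_{\{2\}}=\{\tilde\alpha\}$ by \eqref{eq_idsE6}, so $B_I=\g_{\tilde\alpha}$; if $2\notin I$ but $4\in I$, then both roots with $\alpha_4$-coefficient $m_4$ already satisfy the constraints defining $\Phi_I$, so $B_I=B_{\{4\}}$; if $I\cap\{2,4\}=\emptyset$ but $\{3,5\}\subseteq I$, then all three roots of $\Phi_{\{3,5\}}$ lie in $\Phi_I$, so $B_I=B_{\{3,5\}}$; and the only $\mu$-stable set left is $\{1,6\}$. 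Thus $\mathfrak e_{6,2}$ has the four inner ideals $\B_{\{2\}},\B_{\{4\}},\B_{\{3,5\}},\B_{\{1,6\}}$ of dimensions $1,2,3,8$, and since $\{1,6\}\subseteq\{1,3,5,6\}\subseteq\{1,3,4,5,6\}\subseteq\{1,2,3,4,5,6\}$ describe these maximally by \eqref{eq_idsE6}, they form the chain $\B_{\{2\}}\subset\B_{\{4\}}\subset\B_{\{3,5\}}\subset\B_{\{1,6\}}$. For $\mathfrak e_{6,-14}$ the white nodes are $\{1,2,6\}$ and $\mu$ swaps $1\leftrightarrow 6$, so the adapted sets are $\{2\},\{1,6\},\{1,2,6\}$; as $\tilde\alpha$ has the maximal coefficients $m_1,m_2,m_6$ on $\alpha_1,\alpha_2,\alpha_6$, one gets $B_{\{1,2,6\}}=\g_{\tilde\alpha}=B_{\{2\}}$, leaving only $\B_{\{2\}}\subset\B_{\{1,6\}}$, of dimensions $1$ and $8$.

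For $\mathfrak e_{6,-26}$ the white nodes are $\{1,6\}$ and there is no arrow, so $\mu=\id$ and every subset of $\{1,6\}$ is adapted: $\{1\},\{6\},\{1,6\}$ give $\B_{\{1\}},\B_{\{6\}},\B_{\{1,6\}}$ of dimensions $16,16,8$, with $\B_{\{1,6\}}\subset\B_{\{1\}}$ and $\B_{\{1,6\}}\subset\B_{\{6\}}$. To reach the stated count of two classes, one must still prove that $\B_{\{1\}}$ and $\B_{\{6\}}$ are conjugate in $\mathfrak g$, and this is the single point that is not settled by the combinatorics — the main obstacle of the proof. No inner automorphism does it (already in $\mathfrak g^\CC$, $B_{\{1\}}$ and $B_{\{6\}}$ are nilradicals of non-conjugate parabolics, of types $1$ and $6$, which are interchanged by the opposition involution $-w_0$ of $E_6$), and neither is it produced by the restricted Weyl group ($-\id\notin W(A_2)$, and no element of $W(\Sigma)$ interchanges $\bar\alpha_1$ and $\bar\alpha_6$). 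What I would invoke is that $\mathfrak e_{6,-26}$ — whose Satake diagram has no arrows — admits an outer automorphism realizing the symmetry $1\leftrightarrow 6$ of the $E_6$ diagram, namely the standard polarity of the octonionic projective plane, which becomes concrete in the model of $\mathfrak e_{6,-26}$ over the Albert algebra developed in \cref{se_moreonexceptional}; this automorphism sends $\B_{\{1\}}$ to $\B_{\{6\}}$. Putting the three computations together gives exactly the lists, dimensions and chains of the statement.
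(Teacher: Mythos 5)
Your proposal is correct and follows the paper's own route: \cref{mainth} plus a reading of the three Satake diagrams is exactly the content of the paper's short proof, and your case analysis of the adapted sets and the resulting dimensions and inclusions matches it. The one place you go beyond the paper is $\mathfrak{e}_{6,-26}$, where you rightly note that the count of two classes requires $\B_{\{1\}}$ and $\B_{\{6\}}$ (both adapted, both of dimension $16$, and distinct as subspaces) to be conjugate inside the real form --- the paper silently carries over the isomorphism $\B_{\{1\}}\cong\B_{\{6\}}$ from the split list \eqref{eq_idsE6} --- and your appeal to an outer automorphism of $\mathfrak{e}_{6,-26}$ is the right fix; concretely, the Cartan involution (which complexifies to the outer involution of $\mathfrak e_6(\CC)$ with fixed subalgebra of type $F_4$ and restricts to $-\id$ on $\mathfrak a$), composed with an inner representative of the longest element of the restricted Weyl group of $\Sigma\cong A_2$, induces the flip $\bar\alpha_1\leftrightarrow\bar\alpha_6$ and hence maps $\B_{\{1\}}$ onto $\B_{\{6\}}$.
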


\begin{proof}
The Satake diagram of the real form $\g$ has 
\begin{itemize}
\item white nodes $\{1,2,3,4,5,6 \}$ and arrows connecting the nodes $3$ and $5$ and  the nodes $1$ and $6$, if $\g=\mathfrak{e}_{6,2}$;
\item white nodes $\{1,2,6 \}$ and arrows connecting the nodes $1$ and $6$, if $\g=\mathfrak{e}_{6,-14}$;
\item white nodes $\{1,6 \}$ and no  arrows, if $\g=\mathfrak{e}_{6,-26}$.
\end{itemize}
\end{proof}
Hence  $ \mathfrak{e}_{6,-26}$ has no pure extremal elements (there are no $\mathbb Z$-gradings with a corner of dimension 1).

\begin{remark} \label{reotrometodo}
According to \cref{re_methodold}, there is an alternative method to arrive at the same results. Note that $\Phi\setminus \Phi_0$ is a root system of type $F_4$, $BC_2$ and $A_2$ if the signature of the real form is $2$, $-14$ or $-26$ respectively. For instance, in the last case, the inner ideals of a split    algebra of type $A_2$ are the root spaces $B^1=\g_{\a'_1}$ and $B^2=\g_{\a'_1}\oplus\g_{\a'_1+\a'_2}$, where $\a'_1=\a_1\vert_{\mathfrak{a}}$ and $\a'_2=\a_6\vert_{\mathfrak{a}}$. It is well known that 
the restricted multiplicities are $m_{\bar\a_1}=8=m_{\bar\a_6}$ \cite{Helgason2001} (see also \cite{DraperFontanals2016}, with explicit constructions of the Satake diagrams of these real forms), so that $\dim B^1=8$ and $\dim B^2=16$. Similarly the structure of the inner ideals of the complex Lie algebras of type $F_4$ and $B_2$ tells us that for $\mathfrak{e}_{6,2}$ there will be 4 abelian inner ideals, and for $\mathfrak{e}_{6,-14}$ there will be only 2.  In order to compute the explicit dimensions we again need the restricted multiplicities. 
\end{remark} 

%%%%%%%%%%%%%%%%%%%%%%%%%%%%%%%%%%%%%%%%%%%%%%%%%%%%%%%%
\subsection{Type $E_7$.} \label{se_tipoe7}
The maximal root is $2\a_1+2\a_2+3\a_3+4\a_4+3\a_5+2\a_6+\a_7$.
There are $10$  non-zero abelian inner ideals of the split algebra $\mathfrak{e}_{7,7}$, given by
\begin{equation*} 
\begin{aligned}
\B_{\{ 1,2,3,4,5,6,7 \}}=\B_{\{ 1  \}},\quad 
&\B_{\{ 2,3,4,5,6,7 \}}=\B_{\{  3 \}},\quad 
&&\B_{\{ 2,4,5,6,7 \}}=\B_{\{ 4  \}},\\
\B_{\{ 2,5,6,7 \}}=\B_{\{ 2,5  \}},\quad 
&\B_{\{ 2,6,7 \}}=\B_{\{ 2,6  \}},\quad 
&&\B_{\{5,6,7  \}}=\B_{\{ 5 \}} ,\\ 
\B_{\{ 2,7 \}}  , \quad
&\B_{\{ 6,7 \}}=\B_{\{ 6 \}} , \quad
&&\B_{\{ 2 \}}, \quad
\B_{\{ 7 \}} ;
\end{aligned} 
\end{equation*}
of dimensions $1$, $2$, $3$, $4$, $5$, $5$, $6$, $10$, $7$ and $27$, respectively. 

\begin{proposition}
\begin{itemize}
\item  There are $4$  non-zero abelian inner ideals of the Lie algebra $\mathfrak{e}_{7,5}$, of dimensions $1$, $2$, $3$ and $10$, namely, $\B_{\{1 \}}\subset \B_{\{3 \}}\subset \B_{\{4 \}} \subset \B_{\{6 \}}$.
\item There are $3$  non-zero abelian inner ideals of the Lie algebra $\mathfrak{e}_{7,-25}$, of dimensions $1$, $10$ and $27$, namely,  $\B_{\{1 \}}\subset \B_{\{6 \}} \subset \B_{\{7\}}$.
\end{itemize}
\end{proposition}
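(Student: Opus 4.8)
The plan is to deduce both parts exactly as the analogous statements for the classical types were proved: feed the explicit list, displayed just above, of the ten abelian inner ideals of the split form $\mathfrak{e}_{7,7}$ into \cref{mainth}. By that theorem every abelian inner ideal of a real form $\g$ of type $E_7$ is conjugate to $\B_J$ for some non-empty $J$ adapted to the Satake diagram of $\g$, and since $\B_J$ depends only on the inner ideal it describes it is enough to run through the ten inner ideals of $\mathfrak{e}_{7,7}$ and keep precisely those whose \emph{minimal} describing set --- one of $\{1\},\{3\},\{4\},\{2,5\},\{2,6\},\{5\},\{2,7\},\{6\},\{2\},\{7\}$, with respective dimensions $1,2,3,4,5,5,6,10,7,27$ --- is adapted to the diagram under consideration.

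First I would read off from \cref{se_losdiagramasconcretos} that $\mathfrak{e}_{7,5}$ has white nodes $\{1,3,4,6\}$ and no arrows, and $\mathfrak{e}_{7,-25}$ has white nodes $\{1,6,7\}$ and no arrows. Since neither Satake diagram has an arrow, the second condition in \cref{de_adapted} is automatically satisfied, so a subset is adapted exactly when every one of its indices is white. Screening the ten minimal sets against $\{1,3,4,6\}$ leaves $\{1\},\{3\},\{4\},\{6\}$, hence the four inner ideals of dimensions $1,2,3,10$ for $\mathfrak{e}_{7,5}$; screening against $\{1,6,7\}$ leaves $\{1\},\{6\},\{7\}$, hence the three inner ideals of dimensions $1,10,27$ for $\mathfrak{e}_{7,-25}$. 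This already gives the stated counts and dimensions.

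It remains to justify the two displayed chains, which I would obtain from the \emph{maximal} describing sets in the list together with the elementary fact that $I\subseteq J$ forces $\Phi_J\subseteq\Phi_I$, and hence $\B_J\subseteq\B_I$ (this holds verbatim for the real inner ideals $\B_I=\oplus\{\g_{\bar\alpha}:\alpha\in\Phi_I\}$, since the sets $\Phi_I$ live in the fixed root system of $\g^\CC$). For $\mathfrak{e}_{7,5}$ one has $\B_{\{1\}}=\B_{\{1,2,3,4,5,6,7\}}$, $\B_{\{3\}}=\B_{\{2,3,4,5,6,7\}}$, $\B_{\{4\}}=\B_{\{2,4,5,6,7\}}$ and $\B_{\{6\}}=\B_{\{6,7\}}$, and the containments $\{6,7\}\subseteq\{2,4,5,6,7\}\subseteq\{2,3,4,5,6,7\}\subseteq\{1,2,3,4,5,6,7\}$ yield $\B_{\{1\}}\subseteq\B_{\{3\}}\subseteq\B_{\{4\}}\subseteq\B_{\{6\}}$; for $\mathfrak{e}_{7,-25}$, from $\B_{\{1\}}=\B_{\{1,2,3,4,5,6,7\}}$, $\B_{\{6\}}=\B_{\{6,7\}}$ and $\{7\}\subseteq\{6,7\}\subseteq\{1,2,3,4,5,6,7\}$ one gets $\B_{\{1\}}\subseteq\B_{\{6\}}\subseteq\B_{\{7\}}$. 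Each inclusion is proper by the recorded dimension jumps, which finishes the proof.

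As for the main difficulty: there is essentially none internal to this argument --- it is pure bookkeeping against two Satake diagrams. The only substantive inputs are external, namely the complete $E_7$ list (dimensions together with the maximal and minimal descriptions of each member) coming from the classification in \cite{Draper2012}, and the two Satake diagrams of \cref{se_losdiagramasconcretos}. The single point requiring care is that adaptedness must be tested on a \emph{minimal} describing set rather than an arbitrary one: for instance $\B_{\{6\}}$ also equals $\B_{\{6,7\}}$, and $\{6,7\}$ is not adapted to the Satake diagram of $\mathfrak{e}_{7,5}$ since node $7$ is black there, yet the minimal representative $\{6\}$ is adapted --- so this $10$-dimensional inner ideal does occur in $\mathfrak{e}_{7,5}$ and would be overlooked if one tested the wrong representative.
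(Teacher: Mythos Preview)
Your proposal is correct and follows exactly the same approach as the paper: read off the white nodes of the Satake diagrams ($\{1,3,4,6\}$ for $\mathfrak{e}_{7,5}$ and $\{1,6,7\}$ for $\mathfrak{e}_{7,-25}$) and screen the minimal describing sets of the ten split $E_7$ inner ideals against them via \cref{mainth}. The paper's own proof consists of a single sentence recording those two sets of white nodes; you have simply spelled out the filtering and the chain justifications in full, including the useful caveat about testing adaptedness on the minimal representative.
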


\begin{proof}
The Satake diagram of $L$ has 
white nodes $\{1,3,4,6 \}$   if $L=\mathfrak{e}_{7,5}$;
and white nodes $\{1,6,7 \}$   if $L=\mathfrak{e}_{7,-25}$.
\end{proof}

%%%%%%%%%%%%%%%%%%%%%%%%%%%%%%%%%%%%%%%%%%%%%%%%%%%%%%%%
\subsection{Type $E_8$.} \label{sub_e8}
The maximal root is $2\a_1+3\a_2+4\a_3+6\a_4+5\a_5+4\a_6+3\a_7+2\a_8$.
There are $10$  non-zero abelian inner ideals of the split algebra $\mathfrak{e}_{8,8}$, given by
\begin{equation*}\label{eq_idsE}
\begin{aligned}
\B_{\{ 1,2,3,4,5,6,7,8 \}}=\B_{\{ 8  \}},\quad 
&\B_{\{ 1,2,3,4,5,6,7 \}}=\B_{\{ 7  \}},\quad 
&&\B_{\{ 1,2,3,4,5,6 \}}=\B_{\{ 6  \}},\\
\B_{\{ 1,2,3,4,5 \}}=\B_{\{ 5  \}},\quad 
&\B_{\{ 1,2,3,4 \}}=\B_{\{  4 \}},\quad 
&&\B_{\{1,2,3  \}}=\B_{\{  2,3 \}} ,\\ 
\B_{\{ 1,2 \}}, \quad
&\B_{\{ 1,3 \}}=\B_{\{ 3 \}}, \quad
&&\B_{\{ 2 \}}, \quad
\B_{\{ 1 \}} ;
\end{aligned}
\end{equation*} 
with dimensions $1$, $2$, $3$, $4$, $5$, $6$, $7$, $7$, $8$ and $14$. 
Again we have chosen, for each inner ideal, the set $I$ to the left (right, respectively) maximal (minimal, respectively) among the ones representing the inner ideal.

\begin{proposition}
 There are $4$  non-zero abelian inner ideals of the Lie algebra $\mathfrak{e}_{8,-24}$, of dimensions $1$, $2$, $3$ and $14$, namely, $\B_{\{8\}} \subset \B_{\{7\}} \subset \B_{\{6\}}\subset \B_{\{1\}}$. 
\end{proposition}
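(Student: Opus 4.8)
The plan is to apply \cref{mainth} together with the explicit description of the abelian inner ideals of the split form $\mathfrak{e}_{8,8}$ listed just above the statement. First I would record the Satake diagram of $\mathfrak{e}_{8,-24}$, read off from \cref{se_losdiagramasconcretos}: the white nodes are $\{1,6,7,8\}$ (nodes $2$, $3$, $4$, $5$ being black), and there are no arrows. Hence, by \cref{de_adapted}, a non-empty $I\subseteq\{1,\dots,8\}$ is adapted precisely when $I\subseteq\{1,6,7,8\}$ (condition ii) is vacuous here, since $\bar\alpha_i\ne\bar\alpha_j$ for distinct white nodes in the absence of arrows).

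Next I would go through the ten conjugacy classes of non-zero abelian inner ideals of $\mathfrak{e}_{8,8}$ from the display above and, for each, determine the \emph{minimal} subset $I$ representing it (the minimal $I$ being the relevant one for applying \cref{mainth}, as remarked at the start of \cref{section_Explicit classification}). From the displayed equalities the minimal representatives are, respectively, $\{8\}$, $\{7\}$, $\{6\}$, $\{5\}$, $\{4\}$, $\{2,3\}$, $\{1,2\}$, $\{3\}$, $\{2\}$, $\{1\}$. Checking which of these lie inside $\{1,6,7,8\}$: the sets $\{8\}$, $\{7\}$, $\{6\}$, $\{1\}$ are adapted, while $\{5\}$, $\{4\}$, $\{2,3\}$, $\{1,2\}$, $\{3\}$, $\{2\}$ are not. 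By \cref{mainth} every abelian inner ideal of $\mathfrak{e}_{8,-24}$ is conjugate to some $\mathcal{B}_I$ with $I$ adapted, and by \cref{pr_BI}(c) each such $\mathcal{B}_I$ is indeed an abelian inner ideal whose complexification is $B_I$; so the four inner ideals $\mathcal{B}_{\{8\}}$, $\mathcal{B}_{\{7\}}$, $\mathcal{B}_{\{6\}}$, $\mathcal{B}_{\{1\}}$ are exactly the non-zero abelian inner ideals of $\mathfrak{e}_{8,-24}$ up to conjugation. Their dimensions $1$, $2$, $3$, $14$ are those of the corresponding $B_I$ in $\mathfrak{e}_{8,8}$, already computed in the split case. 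The inclusions $\mathcal{B}_{\{8\}}\subset\mathcal{B}_{\{7\}}\subset\mathcal{B}_{\{6\}}\subset\mathcal{B}_{\{1\}}$ follow from the monotonicity $I\subseteq J\Rightarrow B_J\subseteq B_I$ applied inside $\mathfrak{e}_{8,8}$ to the maximal representatives $\{1,\dots,8\}\supseteq\{1,\dots,7\}\supseteq\{1,\dots,6\}$ and to $\{1\}$ (noting $\B_{\{1\}}$ contains $\B_{\{1,2,3,4,5,6\}}=\B_{\{6\}}$), intersected with $\mathfrak{g}$; since $(\mathcal{B}_I)^\CC=B_I$ the strictness of the inclusions is inherited from the strict dimension inequalities $1<2<3<14$.

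One subtlety to handle carefully, rather than a genuine obstacle, is that two different subsets of $\{1,\dots,8\}$ can describe the same inner ideal, and that conjugacy in $\mathfrak{e}_{8,8}$ (via the diagram automorphism and $\varphi\in\Aut(L)$) can identify classes that are a priori distinct; but the diagram of $E_8$ has no non-trivial automorphisms, so no such identifications occur among the representatives, and the list of ten is already a list of distinct conjugacy classes. The only real point to verify is that passing from $\mathfrak{e}_{8,8}$-conjugacy to $\mathfrak{e}_{8,-24}$-conjugacy does not merge any of $\mathcal{B}_{\{8\}},\mathcal{B}_{\{7\}},\mathcal{B}_{\{6\}},\mathcal{B}_{\{1\}}$; this is immediate from their distinct dimensions. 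Thus the argument reduces entirely to reading off the white nodes of the Satake diagram and cross-referencing the split-case list, exactly as in the proofs of the preceding propositions, and I would simply write: ``The Satake diagram of $\mathfrak{e}_{8,-24}$ has white nodes $\{1,6,7,8\}$ and no arrows.''
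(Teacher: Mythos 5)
Your proposal is correct and follows exactly the paper's approach: the paper's entire proof is the single sentence ``The Satake diagram of $\mathfrak{e}_{8,-24}$ has white nodes $\{1,6,7,8\}$,'' with the cross-referencing of the split-case list, the minimal representatives, and the application of \cref{mainth} left implicit. Your write-up just makes those implicit steps explicit (and correctly so, including the dimension argument ruling out any merging of conjugacy classes).
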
   

\begin{proof}
The Satake diagram of $\mathfrak{e}_{8,-24}$ has white nodes $\{1,6,7,8 \}$.
\end{proof}

\subsection{Type $F_4$.} 
The maximal root is $2\a_1+3\a_2+4\a_3+2\a_4$.
There are $4$  non-zero abelian inner ideals of the split algebra $\mathfrak{f}_{4,4}$, given by
\begin{align*}
\B_{\{ 1,2,3,4 \}} =\B_{\{ 1 \}} ,\quad
&\B_{\{ 2,3,4 \}} =\B_{\{ 2 \}} ,\\
\B_{\{  3,4\}} =\B_{\{ 3 \}} ,\quad
&\B_{\{ 4 \}}, \end{align*}
of dimensions $1$, $2$, $3$ and $7$ respectively. (Again we use the same convention for the  maximal and minimal  subsets.)

\begin{proposition}
 There is only  one  non-zero abelian inner ideal of the Lie algebra $\mathfrak{f}_{4,-20}$, of dimension $7$, namely, $\B_{\{4\}}$.
 That is, all the proper inner ideals are conjugated.
\end{proposition}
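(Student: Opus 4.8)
The plan is to invoke Theorem~\ref{mainth} together with the already-computed combinatorial data for the split form $\mathfrak{f}_{4,4}$, and then simply read off which of the four inner ideals survive for the real form $\mathfrak{f}_{4,-20}$. By Theorem~\ref{mainth}, every abelian inner ideal of $\mathfrak{f}_{4,-20}$ is conjugated to some $\mathcal B_I$ with $I\subseteq\{1,2,3,4\}$ adapted to the Satake diagram of $\mathfrak{f}_{4,-20}$. From \cref{se_losdiagramasconcretos}, this Satake diagram has white nodes exactly $\{4\}$ (nodes $1,2,3$ are black, and there are no arrows). So condition i) in \cref{de_adapted} forces $I\subseteq\{4\}$, and condition ii) is vacuous since $\bar\alpha_4$ is not equal to any $\bar\alpha_j$ for $j\ne 4$ (the only white node). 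Hence the only non-empty adapted subset is $I=\{4\}$, giving the single abelian inner ideal $\mathcal B_{\{4\}}$.

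Next I would record the dimension. From the list for the split form $\mathfrak{f}_{4,4}$ in this section, $\B_{\{4\}}$ has dimension $7$; equivalently $\dim\mathcal B_{\{4\}}=|\Phi_{\{4\}}|$, the number of positive roots of $F_4$ whose coefficient of $\alpha_4$ equals $m_4=2$ (the coefficient of $\alpha_4$ in the maximal root $2\alpha_1+3\alpha_2+4\alpha_3+2\alpha_4$), and a direct count gives $7$. Since by \cite[Lemma~1.13]{transactions77} every proper inner ideal of a non-degenerate simple finite-dimensional Lie algebra over a field of characteristic $0$ is abelian, and real simple Lie algebras of zero characteristic are non-degenerate by Lemma~\ref{le_lodeladescomposicion}, the classification of \emph{abelian} inner ideals is in fact the classification of all proper inner ideals. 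Therefore $\mathfrak{f}_{4,-20}$ has, up to conjugation, exactly one proper inner ideal, namely $\B_{\{4\}}$, of dimension $7$, and in particular all proper inner ideals are conjugated.

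I do not anticipate a genuine obstacle: the whole argument is a one-line application of \cref{mainth} plus the bookkeeping of the Satake diagram and the $F_4$ root combinatorics already assembled above. The only point requiring minimal care is to confirm that the minimal representing set for $\B_{\{4\}}$ really is $\{4\}$ itself (so that it is trivially adapted) rather than something larger — but this is immediate since $\B_I=\B_{\{\min I\}}$ patterns do not apply here and $\{4\}$ is already a singleton of a white node.

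\begin{proof}
By \cite[Lemma~1.13]{transactions77} every proper inner ideal of $\mathfrak{f}_{4,-20}$ is abelian, so it suffices to classify the abelian ones. By \cref{mainth}, any non-zero abelian inner ideal is conjugated to $\mathcal B_I$ for some non-empty $I\subseteq\{1,2,3,4\}$ adapted to the Satake diagram of $\mathfrak{f}_{4,-20}$. That diagram has a unique white node, the $4^{\mathrm{th}}$ one, and no arrows; hence condition i) of \cref{de_adapted} forces $I\subseteq\{4\}$ and condition ii) is automatically satisfied. Thus $I=\{4\}$ is the only possibility, and $\mathfrak{f}_{4,-20}$ has, up to conjugation, exactly one proper inner ideal, namely $\B_{\{4\}}$. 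Its dimension equals $|\Phi_{\{4\}}|=7$, as computed for the split form $\mathfrak{f}_{4,4}$ above (the coefficient of $\alpha_4$ in the maximal root being $m_4=2$). In particular all proper inner ideals of $\mathfrak{f}_{4,-20}$ are conjugated.
\end{proof}
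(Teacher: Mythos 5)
Your proof is correct and follows exactly the paper's route: the paper's own proof is the one-line observation that the Satake diagram of $\mathfrak{f}_{4,-20}$ has the single white node $\{4\}$, so that $\{4\}$ is the only adapted subset and \cref{mainth} leaves only $\B_{\{4\}}$, whose dimension $7$ was already computed for the split form. Your additional remarks (that proper inner ideals are automatically abelian, and that condition ii) is vacuous here) are accurate and merely make explicit what the paper leaves implicit.
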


\begin{proof}
The Satake diagram of $\mathfrak{f}_{4,-20}$ has only one
 white node,  $\{4\}$.
\end{proof}

%%%%%%%%%%%%%%%%%%%%%%%%%%%%%%%%%%%%%%%%%%%%%%%%%%%%%%%%
\subsection{Type $G_2$.} 
There are $2$  non-zero abelian inner ideals of the split algebra $\mathfrak{g}_{2,2}$, given by $\B_{\{1\}}$ and $\B_{\{2\}}$, of dimensions $1$ and $2$ respectively. Of course, the compact algebra $\mathfrak{g}_{2,-14}$ has no  non-zero abelian inner ideals.

%%%%%%%%%%%%%%%%%%%%%%%%%%%%%%%%%%%%%%%%%%%%%%%%%%%%%%%%
%%%%%%%%%%%%%%%%%%%%%%%%%%%%%%%%%%%%%%%%%%%%%%%%%%%%%%%%
%%%%%%%%%%%%%%%%%%%%%%%%%%%%%%%%%%%%%%%%%%%%%%%%%%%%%%%%

\section{More on inner ideals of real exceptional Lie algebras}\label{se_moreonexceptional}

We consider our combinatorial description of the inner ideals insufficient. So, our next objective is, in the case of exceptional Lie algebras, to provide concrete descriptions and realizations of all these inner ideals.
 
 %%%%%%%%%%%%%%%%%%%%%%%%%%%%%%%%%%%%%

\subsection{Preliminaries on structurable algebras}\label{se_estructurables}   

It is well known that any Jordan algebra allows to define  a 3-graded Lie algebra by means of the Tits-Koecher construction. Structurable algebras are a class of non-associative algebras  with involution introduced in \cite{Allison78}, containing the class of Jordan algebras, which also permits to construct  a $\mathbb Z$-graded Lie algebra  starting from any algebra in this class, in general with 5 pieces, by means of a generalized \emph{Tits-Kantor-Koecher construction}. Moreover, \cite[Theorem~10]{Allison1979} also proves that every finite-dimensional central simple Lie algebra over a field of characteristic zero containing some  non-zero ad-nilpotent element (for instance, with proper inner ideals) is obtained from a (central simple) structurable algebra by using this construction. This is the case of all the  non-compact real (finite-dimensional) simple Lie algebras.
 
 A recent work  by De Medts and Meulewaeter  \cite{Medts2020}  studies the set of inner ideals of the Lie algebras constructed from some  structurable algebras. With these inner ideals they construct Moufang sets, Moufang triangles and Moufang hexagons.

\begin{definition}
Let $(\mathcal A,-)$ be an algebra with involution (an order $2$ anti-homomorphism). For each $x,y\in\mathcal A$, denote by $V_{x,y}\colon\mathcal A\to\mathcal A$ the linear operator given by
$$
V_{x,y}(z)=(x\bar y)z+(z\bar y)x-(z\bar x)y,
$$
for any $z\in\mathcal A$. The algebra $\mathcal A$ is called a \emph{structurable algebra} if, for any $x,y,z,w\in\mathcal A$,
$$
[V_{x,y},V_{z,w}]=V_{V_{x,y}(z),w}-V_{z,V_{y,x}(w)}.
$$
In particular  $\textrm{Instr}(\mathcal A):=\{\sum V_{x_i,y_i}:x_i,y_i\in\mathcal A\}\equiv V_{\mathcal A,\mathcal A}$  is a Lie algebra.
\end{definition}

\begin{example}\label{ex_structurable} 
Some examples, extracted from \cite[\S8]{Allison78}, are:
\begin{itemize}
\item[i)]
If $(\mathcal A,-)$ is a unital associative algebra with involution, then it is structurable. 
\item[ii)] If $J$ is a unital Jordan algebra, that is, commutative and satisfying the Jordan identity ($(x^2y)x=x^2(yx)$), then $J$ is structurable for the involution given by the identity. % 
\item[iii)] If $(C,n)$ is a unital composition algebra, that is, $C$ is endowed with a non-degenerate multiplicative quadratic form $n\colon C\to \mathbb F$ (that is, $n(xy)=n(x)n(y)$), then   $C$ has an involution $-$ such that $n(x)=x\bar x$ and it turns to be an structurable algebra. Moreover, if $C_1$ and $C_2$ are two unital composition algebras, then the tensor product $C_1\otimes C_2$ is structurable for the involution given by $\overline{x_1\otimes x_2}=\overline{x_1}\otimes\overline{x_2}$.
\end{itemize}
\end{example}

\begin{definition}
If $(\mathcal A,-)$ is an structurable algebra, we consider the sets of \emph{skew-symmetric} elements and of \emph{Hermitian} elements defined respectively by
$$
\mathcal S=\mathrm{Skew}(\mathcal A,-):=\{s\in\mathcal A:\bar s=-s\},\qquad\mathcal H:=\{h\in\mathcal A :\bar h=h\}.
$$
Clearly $\mathcal A=\mathcal H\oplus\mathcal  S$. The dimension of $\mathcal S$ is called the \emph{skew-dimension} of $\mathcal A$.
\end{definition}

As mentioned, the interesting point on structurable algebras is that they provide a 5-graded Lie algebra by means of a construction which generalizes the Tits-Koecher construction for Jordan algebras. And conversely, the isotropic finite-dimensional simple Lie  algebras over fields of characteristic zero can be constructed by the TKK-construction (abbreviation of  Tits, Kantor and Koecher) applied to some structurable algebra:   

\begin{definition}  (\cite[\S3]{Allison1979})
Let $(\mathcal A,-)$ be an structurable algebra with $\mathcal S=\mathrm{Skew}(\mathcal A,-)$ and consider  the $\mathbb{Z}$-graded vector space $\mathcal{K}(\mathcal A)=\mathcal{K}_{-2}\oplus\mathcal{K}_{-1}\oplus\mathcal{K}_{0}\oplus\mathcal{K}_{1}\oplus\mathcal{K}_{2}$,
for
$$
\begin{array}{ll}
\mathcal{K}_{2}:=\{(0,s):s\in \mathcal S\},\quad&\mathcal{K}_{1}:= \{(a,0):a\in\mathcal  A\},
\\
\mathcal{K}_{-2}:=\{(0,s)\tilde{\ }:s\in \mathcal S\},\qquad&\mathcal{K}_{-1}:= \{(a,0)\tilde{\ }:a\in\mathcal  A\},
\end{array}
\qquad \mathcal{K}_{0}:=\mathrm{Instr}(\mathcal  A,-)=V_{\mathcal  A,\mathcal   A},
$$
where $ {\tilde{\ }}$ simply denotes a copy of the pair. Then $\mathcal  K(\mathcal   A)$ is a (graded) Lie algebra for the product such that  $\mathrm{Instr}(\mathcal  A)$ is a Lie subalgebra and the following conditions hold:
\begin{equation}\label{eq_productoenKantor}
\begin{array}{l}
\star\quad {[}(a,r),(b,s)]=(0,a\bar b-b\bar a),\vspace{2pt}\\
\star\quad{[}(a,r)\tilde{\ } ,(b,s)\tilde{\ } ]=(0,a\bar b-b\bar a)\tilde{\ },\vspace{2pt} \\

\star\quad{[}(a,r)  ,(b,s)\tilde{\ } ]=(-sa,0)\tilde{\ } +V_{a,b}+L_rL_s+(rb,0),\vspace{2pt}\\
\star\quad{[}T,(a,r) ]=(Ta, T^\delta r),\vspace{2pt}\\
\star\quad{[}T,(a,r)\tilde{\ } ]=(T^\epsilon a, (T^\epsilon)^\delta r)\tilde{\ },\vspace{2pt}\\
 \end{array} 
\end{equation}
for any $T\in \mathrm{Instr}(\mathcal A,-)$, $a,b\in \mathcal{A}$, $r,s\in \mathcal{S}$,
where $L_s,R_s\colon \mathcal A\to \mathcal A$ denote the left and right multiplication operators by $s\in \mathcal S$,
 $T^\epsilon:=T-L_{T(1)+\overline{T(1)}}$ and $T^\delta:=T+R_{\overline{T(1)}}$.
This Lie algebra will be    called the \emph{Kantor construction} or  \emph{TKK-construction}  attached to the structurable algebra $\mathcal A$.
\end{definition}

In the complex case every simple Lie algebra is obtained in this way, while in the real case
 every simple non-compact Lie algebra is obtained in this way.

\subsection{Inner ideals of  $\mathcal K(\mathcal{A})$  for $\mathcal{A} $  tensor product of composition algebras}\label{sec_tensorpro}

It is clear that   $\mathcal K_2$ is an inner ideal of $\mathcal K(\mathcal A)$ for any structurable algebra $\mathcal A$. The purpose now is to collect 
   information about  the remaining inner ideals of  the Lie algebras $\mathcal  K(\mathcal   A)$ when  $\mathcal A$ is a tensor product of composition algebras.
   We are interested in them because  all the exceptional complex Lie algebras not of type $G_2$ arise  from the  case iii) in \cref{ex_structurable}:
  If $C_1$ and $C_2$ are two unital composition algebras over the complex numbers, with $C_2$ of dimension 8 (usually called a \emph{Cayley} algebra) then $\mathcal K(C_1\otimes C_2)$ is the exceptional algebra of type $F_4$, $E_6$, $E_7$ and $E_8$ according to the dimension of $C_1$ being $1$, $2$, $4$ and $8$, respectively. Hence, if $(C_1,n_1)$ and $(C_2,n_2)$ are two unital composition algebras over the real numbers, with $C_2$ a Cayley algebra, thus $\mathcal K(C_1\otimes C_2)$ is   a real form of an exceptional Lie algebra, $\mathcal K(C_1^\CC\otimes C_2^\CC)$. 
  In these cases, we will describe some inner ideals, and we will use the provided information to recognize the concrete real form obtained. 

  Note that the set of skew-symmetric elements of $\mathcal A=C_1\otimes C_2$, for $C_1$ and $C_2$ arbitrary unital real composition algebras, coincides with  $\mathcal S=(\mathcal S_1\otimes \RR)\oplus(\RR\otimes \mathcal  S_2)$, where $\mathcal S_i=\{x\in C_i:\bar x=-x\}$ denotes the set of skew-symmetric elements of the corresponding composition algebra. 
  A key tool to study this type of structurable algebras is to consider the so called \emph{Albert form} defined in \cite[\S3]{Allison1988} by
$$
q_{\mathcal A}\colon \mathcal S\to \RR,\qquad q_{\mathcal A}(s_1\otimes 1+1\otimes s_2)=n_1(s_1)-n_2(s_2).
$$
This allows us to find some of the inner ideals of $\mathcal K(\mathcal A)$, besides $\mathcal K_2$, which of course it is always an inner ideal that can be identified with $\mathcal S$.

\begin{proposition}\label{pr_Jeroen1}
The only inner ideals of  $ \mathcal K(C_1\otimes C_2)$ contained in the component $\mathcal K_2$ are:
\begin{itemize}
\item The whole $\mathcal K_2=\{(0,s):s\in \mathcal S\};$
\item $\{(0,s):s\in I\} $ for any subspace $I$  of $\mathcal S$ such that $q_{\mathcal A}(I)=0$.
\end{itemize}
In consequence, $\mathcal K(C_1\otimes C_2)$ has inner ideals of each dimension $1,\dots, m$ for $m$ the Witt index of $q_{\mathcal A}$ (the dimension of the maximal isotropic subspace), and also an inner ideal of dimension equal to $\dim \mathcal S=\dim C_1+\dim C_2-2$.
\end{proposition}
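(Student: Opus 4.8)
The plan is to analyze when a subspace $B\subseteq\mathcal K_2$ is an inner ideal of $\mathcal K(\mathcal A)$, using the explicit bracket formulas in \eqref{eq_productoenKantor}. Write $B=\{(0,s):s\in I\}$ for a subspace $I\subseteq\mathcal S$. The inner ideal condition $[B,[B,\mathcal K(\mathcal A)]]\subseteq B$ is equivalent, by homogeneity, to $[B,[B,\mathcal K_{-2}]]\subseteq B$ and $[B,[B,\mathcal K_{-1}]]\subseteq B$ (all other double brackets of $\mathcal K_2$ with a graded piece land outside $\mathcal K_2$ or are forced to vanish for degree reasons: $[\mathcal K_2,\mathcal K_j]$ has degree $2+j$, and for the double bracket to return to degree $2$ we need $j=-2$ or $j=-1$). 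So first I would reduce to these two families.

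Next I would compute each family. For $\mathcal K_{-2}$: using $[(0,r),(0,s)\tilde{\ }]$ (a special case of the third bracket in \eqref{eq_productoenKantor}, with $a=b=0$), one gets a bracket of the form $L_rL_s\in\mathrm{Instr}(\mathcal A)\subseteq\mathcal K_0$; bracketing again with $(0,r')\in B$ and using the $[T,(a,r)]$ rule produces an element of $\mathcal K_2$ whose $\mathcal S$-component is $(L_rL_s)^\delta(r')$, i.e. governed by a triple product on $\mathcal S$ built from the multiplication of $\mathcal A=C_1\otimes C_2$. The key computation is to show that for $r,s,r'\in\mathcal S$ this triple product lies in $I$ precisely when $I$ is totally isotropic for $q_{\mathcal A}$ (or when $I=\mathcal S$), which is exactly where the Albert form of \cite{Allison1988} enters: the norm forms $n_1,n_2$ linearize to the bilinear forms controlling $r\bar s$, and the relevant triple product is $q_{\mathcal A}(\cdot,\cdot)$-symmetric in the appropriate sense. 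For $\mathcal K_{-1}$: $[(0,r),(b,0)\tilde{\ }]=(-rb,0)\tilde{\ }\in\mathcal K_{-1}$ wait — it lands in degree $2-1=1$, so it is of the form $(rb,0)\in\mathcal K_1$ (second term of the third bracket, $V_{0,b}=0$, $L_rL_s$ absent); bracketing once more with $(0,r')\in B$ via $[(a,r'),(b',s')]$ yields $(0, a\bar{b'}-b'\bar a)$ with $a=0$, hence $0$. So the $\mathcal K_{-1}$-condition is automatic and imposes nothing. This reduces everything to the $\mathcal K_{-2}$ computation.

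Then I would deduce the two cases of the statement: either $I=\mathcal S$ (giving the whole $\mathcal K_2$, always an inner ideal as already noted in the text), or $q_{\mathcal A}(I)=0$, in which case the triple-product computation shows $B$ is an inner ideal; conversely a subspace on which $q_{\mathcal A}$ is nonzero but which is proper cannot close up, because one can exhibit $r\in I$ with $q_{\mathcal A}(r)\neq 0$ and then $(L_rL_r)^\delta(r)$ is a nonzero multiple of $r$ lying in $I$ only by luck — the obstruction is rather that mixed terms $(L_rL_s)^\delta(r')$ for $r,s,r'\in I$ escape $I$ unless $q_{\mathcal A}|_I\equiv 0$. Finally, for the "in consequence" sentence: a nondegenerate (or at least nonzero) quadratic form of Witt index $m$ has totally isotropic subspaces of every dimension $0,1,\dots,m$, giving inner ideals of each dimension $1,\dots,m$ inside $\mathcal K_2$; and $\mathcal K_2$ itself has dimension $\dim\mathcal S=\dim C_1+\dim C_2-2$ since $\mathcal S=(\mathcal S_1\otimes\RR)\oplus(\RR\otimes\mathcal S_2)$ and $\dim\mathcal S_i=\dim C_i-1$.

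The main obstacle I anticipate is the bookkeeping in the $\mathcal K_{-2}$ computation: one must carefully apply the $\epsilon$ and $\delta$ twists in \eqref{eq_productoenKantor}, identify the operator $L_rL_s$ (and its $\delta$-image) acting on $\mathcal S$ as a concrete map expressed via the multiplication of $C_1\otimes C_2$, and then recognize the resulting trilinear expression on $\mathcal S$ as being controlled by $q_{\mathcal A}$. The conceptual content — "inner ideal inside $\mathcal K_2$ $\Leftrightarrow$ totally isotropic for the Albert form" — is clean, but extracting it honestly from the bracket formulas, rather than quoting it, is the technical heart; I would lean on the identities for $L_s$, $R_s$ with $s\in\mathcal S$ in a structurable algebra (in particular that $\mathcal S$ together with these operators behaves well, and that $s\bar s = s(-s)$ relates to the norm) and on the explicit description of skew elements of a tensor product of composition algebras recalled just before the statement.
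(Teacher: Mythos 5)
Your overall route coincides with the paper's: reduce via the grading to the single condition $[B,[B,\mathcal K_{-2}]]\subseteq B$ (your check that the $\mathcal K_{-1}$ contribution vanishes is correct, and every degree other than $-2$ dies for degree reasons), compute the double bracket as $-(0,s'(ts)+s(ts'))$, and control the resulting triple product on $\mathcal S$ by the Albert form via \cite[Equation~3.7]{Allison1988}, which reads $s(ts)=q_{\mathcal A}(s)t^\natural-q_{\mathcal A}(s,t^\natural)s$ with $(s_1\otimes1+1\otimes s_2)^\natural=s_1\otimes1-1\otimes s_2$. For the forward direction your plan works: if $q_{\mathcal A}$ vanishes on $I$ then $s(ts)\in\langle s\rangle$ for $s\in I$, and the mixed term $s'(ts)+s(ts')$ is handled by linearizing over $s$, $s'$, $s+s'$, all isotropic.

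The genuine gap is in your converse. You locate the obstruction in ``mixed terms $(L_rL_s)^\delta(r')$ for $r,s,r'\in I$'', i.e.\ with all three arguments in $I$; that is the wrong mechanism, and the claim is false as stated. Take $\dim C_1,\dim C_2\ge 2$ and $I=1\otimes\mathcal S_2$: for $r,s,t\in I$ one has $t^\natural=-t\in I$, so $s(ts)=q_{\mathcal A}(s)t^\natural-q_{\mathcal A}(s,t^\natural)s\in I$ and the triple product with all arguments in $I$ closes up, even though $q_{\mathcal A}\vert_I=-n_2\not\equiv0$ and $I$ is a proper subspace of $\mathcal S$ which is \emph{not} the $\mathcal S$-part of an inner ideal. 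The point you are missing is that in $[(0,s),[(0,s'),(0,t)\tilde{\ }]]$ only $s,s'$ are constrained to lie in $I$, while $t$ ranges over \emph{all} of $\mathcal S$, since $\mathcal K_{-2}$ is the whole of $\{(0,t)\tilde{\ }:t\in\mathcal S\}$. This is what drives the dichotomy: if some $s\in I$ has $q_{\mathcal A}(s)\ne0$, then $s(ts)\in I$ for every $t\in\mathcal S$, whence $q_{\mathcal A}(s)t^\natural=s(ts)+q_{\mathcal A}(s,t^\natural)s\in I$, so $t^\natural\in I$ for all $t\in\mathcal S$; as $\natural$ is bijective on $\mathcal S$, this forces $I=\mathcal S$. (Your parenthetical that $(L_rL_r)^\delta(r)$ is ``a nonzero multiple of $r$'' is also incorrect in general: $r(rr)=q_{\mathcal A}(r)r^\natural-q_{\mathcal A}(r,r^\natural)r$ involves $r^\natural$, which is proportional to $r$ only when $r$ lies in one of the two tensor factors.) The remaining parts of your plan, including the ``in consequence'' sentence, are fine.
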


\begin{proof} Of course $ \mathcal K_2$ is an inner ideal. Now 
take $B=\{(0,s):s\in I\}\subseteq \mathcal K_2$ for a subspace $I$ of $\mathcal S$ such that $q_{\mathcal A}(I)=0$, and let us check that $[B,[B,\mathcal K]]\subseteq B$, or equivalently, due to the grading, that $[B,[B,\mathcal K_{-2}]]\subseteq B$. That is, we have to see, for any $s,s'\in I$ and $t\in \mathcal S$, if 
$[(0,s),[(0,s'),(0,t)\tilde{\ }]]=(0,-(L_{s'}L_t)^\delta s)=-(0,s'(ts)+s(ts'))$ belongs or not to $B$. By using \cite[Equation~3.7]{Allison1988}, we have that $s(ts)\in\langle s\rangle$ for any $s,t\in\mathcal S$ such that $q_{\mathcal A}(s)=0$. 
In our case $s,s',s+s'\in I$; so that $q_{\mathcal A}(s)=q_{\mathcal A}(s')=q_{\mathcal A}(s+s')=0$, so 
$-(s'(ts)+s(ts'))=s(ts)+s'(ts')-(s+s')(t(s+s'))\in\langle s,s'\rangle\subseteq I$, as we needed.
 
Conversely, if $B\subseteq \mathcal K_2$ is an inner ideal, there is a vector subspace $I\le\mathcal S$ with $B=\{(0,s):s\in I\}$. Let us see that, if there exists $s\in I$ with $q_{\mathcal A}(s)\ne0$, then $I$ coincides with the whole $\mathcal S$. Take into account that $[(0,s),[(0,s),(0,t)\tilde{\ }]]\in [B,[B,\mathcal K]]\subseteq B$ for any   $t\in \mathcal S$, which means that $s(ts)\in I$. Again \cite[Equation~3.7]{Allison1988} says that $q_{\mathcal A}(s)t^\natural=s(ts)+q_{\mathcal A}(s,t^\natural)s\in I,$ where
$(s_1\otimes 1+1\otimes s_2)^\natural:=s_1\otimes 1-1\otimes s_2$. Hence $t^\natural\in I$ for any $t\in\mathcal S$ and $I=\mathcal S$.
\end{proof}

Another inner ideal of the Lie algebra $\mathcal K(C_1\otimes C_2)$, which is not contained in $\mathcal K_2$, appears in case $C_2=\mathbb R\oplus \mathbb R$. This is a composition algebra with product componentwise and exchange involution. So $1_{C_2}=(1,1)=e_1+e_2$ for $e_1=(1,0)$ and $e_2=(0,1)$ orthogonal idempotents with $\overline{e_1}=e_2$ and $\overline{e_2}=e_1$. Note that $C_2e_i\subseteq \RR e_i$ for any $i=1,2$, and $e_i\bar e_i=0$. Besides, it is easy to prove that $C_1\otimes C_2\cong C_1\oplus C_1^{op}$ with $\overline{(x,y)}=(y,x)$.

\begin{proposition}\label{pr_Jeroen2} (See \cite[Lemma~6.10]{Medts2020})
If $C_1$ is a composition algebra and $C_2=\mathbb R\oplus \mathbb R$, then
$$
B=\{(a,0):a\in C_1\otimes (1,0)\}\oplus \{(0,s):s\in \mathcal S= (\mathcal S_1\otimes 1)\,\oplus\,( \mathbb R1\otimes(1,-1))\}
$$
is an inner ideal of  $ \mathcal K(C_1\otimes C_2)$ containing the component $\mathcal K_2$ of  dimension equal to $2\dim \mathcal S=2\dim C_1$.
\end{proposition}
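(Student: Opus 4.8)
The plan is to verify directly that $[B,[B,\mathcal K(C_1\otimes C_2)]]\subseteq B$, exploiting the $\mathbb Z$-grading $\mathcal K=\bigoplus_{m=-2}^2\mathcal K_m$. Write $\mathcal A=C_1\otimes C_2$ and, following the hint $\mathcal A\cong C_1\oplus C_1^{\mathrm{op}}$ with $\overline{(x,y)}=(y,x)$, identify $C_1\otimes(1,0)$ with the ``first copy'' $\{(x,0):x\in C_1\}$, which I will call $\mathcal A_1$, and similarly $\mathcal A_2:=C_1\otimes(0,1)$. Then $\mathcal A=\mathcal A_1\oplus\mathcal A_2$, these are totally isotropic for the form $h\bar h$ and satisfy $\mathcal A_i\overline{\mathcal A_i}=0$ (since $e_i\bar e_i=0$), while $\overline{\mathcal A_1}=\mathcal A_2$. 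The skew elements are $\mathcal S=(\mathcal S_1\otimes1)\oplus\RR(1\otimes(1,-1))$; note $1\otimes(1,-1)=(1,-1)$ in the $C_1\oplus C_1^{\mathrm{op}}$ picture. So $B=(\mathcal A_1)^{(1)}\oplus\mathcal K_2$, where $(\mathcal A_1)^{(1)}=\{(a,0):a\in\mathcal A_1\}\subseteq\mathcal K_1$.

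**Reduction and the grading.** Since $B\subseteq\mathcal K_1\oplus\mathcal K_2$, the bracket $[B,\mathcal K]$ lands in $\mathcal K_{\geq -1}$, and $[B,[B,\mathcal K]]$ in $\mathcal K_{\geq 0}$. The component of $[B,[B,\mathcal K]]$ in $\mathcal K_3$ and $\mathcal K_4$ is automatically zero (the grading has only five pieces), so the only constraints are in degrees $0,1,2$. I would organize the check by bidegree of the two $B$-slots: a bracket $[(x+\sigma),[(x'+\sigma'),z]]$ with $x,x'\in(\mathcal A_1)^{(1)}$, $\sigma,\sigma'\in\mathcal K_2$, and $z\in\mathcal K_m$. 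Because $\mathcal K_2$ is already known to be an inner ideal, the ``all-$\mathcal K_2$'' terms are fine, and one only needs to track the terms where at least one slot is in $(\mathcal A_1)^{(1)}$. Using the multiplication rules \eqref{eq_productoenKantor}, the relevant outputs are of three shapes: (i) a $\mathcal K_2$-term $(0,a\bar b-b\bar a)$ arising from $[\mathcal K_1,\mathcal K_1]$ with $a,b\in\mathcal A_1$ — this vanishes since $\mathcal A_1\overline{\mathcal A_1}=0$; (ii) a $\mathcal K_1$-term $(rb,0)$ with $r\in\mathcal S$, $b\in\mathcal A_1$ — I must show $rb\in\mathcal A_1$; (iii) an $\mathcal K_0$-term $V_{a,b}+L_rL_s$ whose action on $(\mathcal A_1)^{(1)}$ and on $\mathcal K_2$ must again land in $B$.

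**The key computations.** The heart of the argument is showing $\mathcal S\cdot\mathcal A_1\subseteq\mathcal A_1$ and, symmetrically, $\overline{\mathcal A_1}\,\mathcal S\subseteq\mathcal A_2$, plus that $V_{a,b}$ for $a,b\in\mathcal A_1$ preserves the decomposition appropriately. For $\mathcal S\cdot\mathcal A_1$: an element of $\mathcal S$ is $s_1\otimes1+\lambda(1\otimes(1,-1))$ with $s_1\in\mathcal S_1$, $\lambda\in\RR$; in the $C_1\oplus C_1^{\mathrm{op}}$ model this is $(s_1+\lambda,\,s_1-\lambda)$, and acting on $(a,0)$ gives $((s_1+\lambda)a,0)\in\mathcal A_1$ — so $\mathcal S\,\mathcal A_1\subseteq\mathcal A_1$ is immediate, and likewise $\mathcal S\,\mathcal A_2\subseteq\mathcal A_2$. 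For the $\mathcal K_0$-contribution: by \eqref{eq_productoenKantor}, $[(a,0),(b,0)\tilde{\,}]=-(sa,0)\tilde{\,}+V_{a,b}+L_rL_s+(rb,0)$ with $r=s=0$ here since the $\mathcal S$-slots of our $\mathcal K_{\pm1}$-elements are zero, so this reduces to $-(0,0)\tilde{\,}+V_{a,b}+(0,0)$; I then need $V_{a,b}$ (for $a,b\in\mathcal A_1$) to act on $B$ inside $B$. Computing $V_{a,b}(c)=(a\bar b)c+(c\bar b)a-(c\bar a)b$: if $c\in\mathcal A_1$ then $\bar a,\bar b\in\mathcal A_2$, so $a\bar b\in\mathcal A_1\mathcal A_2$, $c\bar b\in\mathcal A_1\mathcal A_2$, $c\bar a\in\mathcal A_1\mathcal A_2$; I must check $\mathcal A_1\mathcal A_2\subseteq C_1$ acts by preserving $\mathcal A_1$ — concretely $(x,0)(0,y)=0$ in $C_1\oplus C_1^{\mathrm{op}}$, so actually $a\bar b=0$, $c\bar b=0$, $c\bar a=0$ and $V_{a,b}(c)=0$. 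Similarly $V_{a,b}$ applied to $\mathcal S$: one gets terms landing in $\mathcal A_1$ plus possibly a correction — but the operator $L_rL_s$ is absent, and one finds $V_{a,b}(\mathcal S)\subseteq\mathcal A_1\subseteq B$. The delta/epsilon twists ($T^\delta$, $T^\epsilon$) on $V_{a,b}$ with $a,b\in\mathcal A_1$ only modify $T$ by multiplications by $T(1)+\overline{T(1)}$ and $\overline{T(1)}$; since $V_{a,b}(1)\in\mathcal A_1\mathcal A_2=0$ in our situation, these twists are trivial and cause no trouble.

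**Main obstacle.** The bookkeeping obstacle — and the only real subtlety — is the $[\mathcal K_1,\mathcal K_{-1}]$-type bracket, i.e. the $\mathcal K_0$-slot, because there $V_{a,b}$, $L_rL_s$, the $\epsilon$/$\delta$ twists, and the ``off-diagonal'' product structure $C_1\oplus C_1^{\mathrm{op}}$ all interact; one must be careful that the decomposition $\mathcal A=\mathcal A_1\oplus\mathcal A_2$ really is preserved by all operators that can appear and that no term sneaks into $(\mathcal A_2)^{(1)}$ or into $\mathcal K_{-1}$. Once the identities $\mathcal A_1\overline{\mathcal A_1}=0$, $\mathcal A_1\mathcal A_2=0$, $\mathcal S\,\mathcal A_1\subseteq\mathcal A_1$ are in hand, each case collapses to something trivial or to membership already covered by $\mathcal K_2$ being an inner ideal. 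I would present it as a short case analysis over $m\in\{-2,-1,0,1,2\}$ for the innermost slot $z\in\mathcal K_m$, invoking the multiplication table \eqref{eq_productoenKantor} and these three identities; alternatively, one may simply cite \cite[Lemma~6.10]{Medts2020} for the structural statement and only recompute the dimension $\dim B=\dim(\mathcal A_1)+\dim\mathcal S=\dim C_1+(\dim C_1)=2\dim C_1=2\dim\mathcal S$, the last equality because $\dim\mathcal S=\dim\mathcal S_1+1=(\dim C_1-1)+1=\dim C_1$.
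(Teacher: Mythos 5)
Your overall strategy is the paper's own: reduce via the $\ZZ$-grading to a handful of bracket shapes and dispatch each one using the identities $\mathcal A_1\overline{\mathcal A_1}=0$, $\mathcal A_1\mathcal A_2=\mathcal A_2\mathcal A_1=0$ and $\mathcal A\,\mathcal A_1\subseteq\mathcal A_1$ in the model $C_1\otimes C_2\cong C_1\oplus C_1^{\mathrm{op}}$. The dimension count is fine. But there is a genuine logical gap in how you treat the degree-zero component. The piece $[B_1,[B_1,\mathcal K_{-2}]]$ lands in $\mathcal K_0$ by the grading, and since $B\cap\mathcal K_0=0$, containment in $B$ forces this piece to vanish identically: you must show that $V_{a,b}$ is the \emph{zero operator} for $a,b\in\mathcal A_1$ (these are the operators $V_{a,-tb}$ with $tb\in\mathcal S\mathcal A_1\subseteq\mathcal A_1$ that arise here). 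You instead state the requirement as ``$V_{a,b}$ acts on $B$ inside $B$'' and are content with $V_{a,b}(\mathcal A_1)=0$ and $V_{a,b}(\mathcal S)\subseteq\mathcal A_1$; a nonzero operator preserving $B$ would still be a nonzero element of $\mathcal K_0$ sitting in $[B,[B,\mathcal K]]$ but not in $B$, so that criterion is not sufficient. The repair is cheap --- the same computation with $c=(c_1,c_2)$ arbitrary gives $a\bar b=0$ and $(c\bar b)a,(c\bar a)b\in\mathcal A_2\mathcal A_1=0$, hence $V_{a,b}=0$ on all of $\mathcal A$, which is exactly what the paper proves --- but as written your argument does not establish it.

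A secondary imprecision: in the $[\mathcal K_1,\mathcal K_{-1}]$ case the $\mathcal K_0$-operator appearing as an \emph{intermediate} is $V_{b,c}$ with $b\in\mathcal A_1$ but $c\in\mathcal A$ \emph{arbitrary} (the innermost slot ranges over all of $\mathcal K_{-1}$), so the fact you need there is $V_{b,c}(a)=(b\bar c)a+(a\bar c)b-(a\bar b)c\in\mathcal A\,\mathcal A_1\subseteq\mathcal A_1$ for $a,b\in\mathcal A_1$; your computation of $V_{a,b}$ with \emph{both} subscripts in $\mathcal A_1$ does not cover this case. Again the needed identity is one you already have in hand, but the case analysis as presented conflates two different brackets (final $\mathcal K_0$-output versus $\mathcal K_0$-intermediate) and assigns the wrong acceptance criterion to each.
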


\begin{proof}
Note that $B=B_1\oplus \mathcal K_2$ with $B_1=\{(a,0):a\in C_1\otimes e_1\}\subseteq \mathcal K_1$. Taking into account the $\mathbb Z$-grading on the Lie algebra, then checking $[B,[B,\mathcal K(C_1\otimes C_2)]]\subseteq B$ is equivalent to checking the following conditions:
\begin{itemize}
\item[a)] $[B_1,[\mathcal K_{ 2},\mathcal K_{-2}]]\subseteq B_1$ and $[\mathcal K_{ 2},[B_1,\mathcal K_{-2}]]\subseteq B_1$,  
\item[b)] $[B_1,[B_1,\mathcal K_{-2}]]=0$ and 
\item[c)] $[B_1,[B_1,\mathcal K_{-1}]]\subseteq B_1$.
\end{itemize}
Take $a,b\in C_1\otimes e_1$, $c\in C_1\otimes C_2$ and $s,t\in\mathcal S$. For item a),
$[(a,0),[(0,s),(0,t)\tilde{\ }]]=(-s(ta),0)\in I_1$ since $(C_1\otimes C_2)(C_1\otimes e_1)\subseteq C_1\otimes e_1$. The other expression in a) holds by applying the Jacobi identity. For b), let us see that
$[(a,0),[(b,0),(0,t)\tilde{\ }]]=-V_{a,tb}$ is the zero map. It is enough to see that $V_{a,b}(c)=0$, since $c\in\mathcal A$ is arbitrary and $tb\in C_1\otimes e_1$.
 Write $a=a_1\otimes e_1$, $b=b_1\otimes e_1$ and $c=c_1\otimes c_2$ (the argument goes equally if $c=\sum_i c^i_1\otimes c^i_2$). Then
$V_{a,b}(c)=(a\bar b)c+(c\bar b)a-(c\bar a)b 
=0$ since $a\bar b=0$, $(c\bar b)a=(c_1\bar b_1)a_1\otimes (c_2e_2)e_1=0$ and similarly changing $a$ and $b$.
For c), we compute
$[(a,0),[(b,0),(c,0)\tilde{\ }]]=-(V_{b,c}(a),0)$ but $V_{b,c}(a)=(b\bar c)a+(a\bar c)b-0\in  C_1\otimes C_2e_1\subseteq C_1\otimes e_1$.
\end{proof}

Note that the above two propositions are also valid for a field $\mathbb F$ and $C_2=\mathbb F\oplus\mathbb F$. But in the real case we get some useful conclusions, taking advantage of our descriptions  of the inner ideals in \cref{section_Explicit classification}. In particular, all the non-compact real forms of $E$-type appear:

\begin{corollary}\label{corolarioexc}
 Denote by $\mathbb{O}$ and $\mathbb{O}_s$ the octonion division algebra and the split octonion algebra, respectively. Denote by $\mathbb{C}_s=\mathbb{R}\oplus \mathbb{R}$.
 Then the non-compact real exceptional Lie algebras of $E_6$-type can be obtained as follows:
 $$
 \mathcal K(\mathbb{O}_s\otimes\mathbb{C}_s)\cong\mathfrak{e}_{6,6}\,;\quad
  \mathcal K(\mathbb{O}_s\otimes\mathbb{C})\cong\mathfrak{e}_{6,2}\,;\quad
   \mathcal K(\mathbb{O}\otimes\mathbb{C})\cong\mathfrak{e}_{6,-14}\,;\quad
    \mathcal K(\mathbb{O}\otimes\mathbb{C}_s)\cong\mathfrak{e}_{6,-26}.
     $$ 
     In the same way, denoting by   $\mathbb{H}$ and $\mathbb{H}_s\cong\mathrm{Mat}_{2\times 2}(\mathbb R)$ the quaternion division algebra and the split quaternion algebra, respectively, then the  non-compact real forms of $E_7$-type are 
     $$
 \mathcal K(\mathbb{O}_s\otimes\mathbb{H}_s)\cong\mathfrak{e}_{7,7}\,;\quad
  \mathcal K(\mathbb{O}_s\otimes\mathbb{H})\cong\mathfrak{e}_{7,5}\cong
   \mathcal K(\mathbb{O}\otimes\mathbb{H}) \,;\quad
    \mathcal K(\mathbb{O}\otimes\mathbb{H}_s)\cong\mathfrak{e}_{7,-25};
     $$ 
     while the  non-compact real forms of $E_8$-type are 
         $$
 \mathcal K(\mathbb{O}_s\otimes\mathbb{O}_s)\cong\mathfrak{e}_{8,8}\cong  \mathcal K(\mathbb{O}\otimes\mathbb{O})\,;\quad
  \mathcal K(\mathbb{O}\otimes\mathbb{O}_s)\cong\mathfrak{e}_{8,-24}.
     $$ 
\end{corollary}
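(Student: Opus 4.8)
The strategy is to compute, for each tensor product $\mathcal A=C_1\otimes C_2$ of real composition algebras in the list, enough discrete invariants of the Lie algebra $\mathcal K(\mathcal A)$ to pin down the real form among the finitely many candidates with the correct complexification. Since $\mathcal K(C_1^\CC\otimes C_2^\CC)$ is the split exceptional algebra of type $F_4,E_6,E_7,E_8$ according to $\dim C_1\in\{1,2,4,8\}$ (with $\dim C_2=8$), we already know $\mathcal K(\mathcal A)$ is \emph{some} real form of that type. For the $E_6$ row there are four non-compact real forms ($\mathfrak e_{6,6},\mathfrak e_{6,2},\mathfrak e_{6,-14},\mathfrak e_{6,-26}$) plus the compact one; for $E_7$ there are three non-compact ($\mathfrak e_{7,7},\mathfrak e_{7,5},\mathfrak e_{7,-25}$); for $E_8$ there are two ($\mathfrak e_{8,8},\mathfrak e_{8,-24}$). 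So the job is to distinguish these using invariants that are computable directly from $\mathcal A$.

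The two natural invariants to use are: (i) the structure of the inner ideals of $\mathcal K(\mathcal A)$ contained in $\mathcal K_2$, controlled by the Witt index of the Albert form $q_{\mathcal A}$ via \cref{pr_Jeroen1}; and (ii) the signature of the Killing form of $\mathcal K(\mathcal A)$, which is exactly the second index in the notation $\mathfrak e_{n,k}$. For (i): $q_{\mathcal A}(s_1\otimes1+1\otimes s_2)=n_1(s_1)-n_2(s_2)$, where $n_i$ restricted to $\mathcal S_i$ is positive definite if $C_i$ is division and has Witt index $\tfrac12\dim\mathcal S_i$ (rounded appropriately) if $C_i$ is split. Running through the cases — e.g. $\mathbb O_s\otimes\mathbb C_s$ gives $q_{\mathcal A}$ of maximal Witt index, $\mathbb O\otimes\mathbb C$ gives $q_{\mathcal A}$ anisotropic (only the $1$-dimensional inner ideal $\mathcal K_2$ of the minimal type, of the wrong... actually $\dim\mathcal S$), etc. — produces a list of Witt indices, which by \cref{pr_Jeroen1} equals the maximal length of a chain of inner ideals sitting inside $\mathcal K_2$, and this is then matched against the chains of abelian inner ideals computed case-by-case in \cref{pr_idsE6} and \cref{se_tipoe7}, \cref{sub_e8}. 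For the cases where \cref{pr_Jeroen1} alone does not separate two real forms, one invokes \cref{pr_Jeroen2} (available when $C_2$ has a split factor $\mathbb R\oplus\mathbb R$, after writing $\mathbb H_s\cong\mathrm{Mat}_2(\mathbb R)$ and $\mathbb O_s$ appropriately, or by using that $\mathbb O_s\cong$ Zorn matrices so that $\mathbb O_s\otimes C_2$ still has exhibitable split inner ideals) to produce a larger inner ideal and thereby a longer chain, again compared with the tables. Alternatively, and more robustly, one computes the real rank of $\mathcal K(\mathcal A)$ as the dimension of a maximal split torus built from the $\mathfrak{sl}_2$-triples attached to a maximal chain of inner ideals (\cref{le_hom}), which equals the real rank listed in \cref{se_losdiagramasconcretos}.

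For (ii), the signature: the Killing form of $\mathcal K(\mathcal A)$ can be computed grading-piece by grading-piece from the forms $n_1,n_2$ (the $\mathcal K_{\pm2}$ pieces contribute $\pm q_{\mathcal A}$ up to scaling, the $\mathcal K_{\pm1}$ pieces contribute the norm form of $\mathcal A=C_1\otimes C_2$ which is $n_1\otimes n_2$, and $\mathcal K_0=\mathrm{Instr}(\mathcal A)$ contributes a form determined by $n_1,n_2$); summing signatures over the five pieces gives a number that one checks equals the claimed $k$ in $\mathfrak e_{n,k}$. The coincidences asserted — $\mathcal K(\mathbb O_s\otimes\mathbb H)\cong\mathcal K(\mathbb O\otimes\mathbb H)$, $\mathcal K(\mathbb O_s\otimes\mathbb O_s)\cong\mathcal K(\mathbb O\otimes\mathbb O)$ — follow because both members have the same complexification, the same Witt index of Albert form (one computes $n(\mathbb O_s)\perp -n(\mathbb H)$ and $n(\mathbb O)\perp -n(\mathbb H)$ have equal Witt index since the $8$-dimensional split form absorbs the anisotropic part, and likewise $n(\mathbb O_s)\perp -n(\mathbb O_s)$ versus $n(\mathbb O)\perp-n(\mathbb O)$ are both hyperbolic of the same rank), and the same Killing signature.

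\textbf{Main obstacle.} The delicate point is not any single computation but ensuring the chosen invariants are actually \emph{complete} for the relevant families — i.e. that Witt index of $q_{\mathcal A}$ together with Killing signature (or real rank) really does separate all the non-compact real forms of a given exceptional type, and in particular correctly certifies the two claimed isomorphisms rather than merely failing to distinguish. This requires cross-checking against the full tables of \cref{section_Explicit classification}: one must verify for instance that $\mathfrak e_{7,5}$ is the unique real form of $E_7$ with the real-rank/inner-ideal profile realized by both $\mathbb O_s\otimes\mathbb H$ and $\mathbb O\otimes\mathbb H$, and that there is no competing form with the same numerics. Handling $\mathbb O_s\otimes\mathbb O_s$ (real rank $8$ forces the split form, so here the split-vs-nonsplit dichotomy settles it immediately) is easy; the genuinely tricky rows are the $E_6$ ones with signatures $2$ and $-14$, where the Albert form has type $BC_2$-data and one must be careful with the non-reduced restricted root system, and the $E_7$ coincidence, where one must confirm that passing from $\mathbb O$ to $\mathbb O_s$ genuinely does not change the isomorphism class even though it changes $\dim\mathcal S$'s isotropy — this is exactly the content of the Albert-form Witt index being insensitive to that change.
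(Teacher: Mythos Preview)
Your approach via invariant (i) --- the Witt index of the Albert form $q_{\mathcal A}$ together with \cref{pr_Jeroen1} --- is exactly the paper's method, and it already suffices by itself. The paper never computes Killing signatures. Once one knows from \cref{section_Explicit classification} the exact list of dimensions of abelian inner ideals contained in the $8$-, $10$-, or $14$-dimensional inner ideal $\mathcal K_2$ for each real form, the Witt index of $q_{\mathcal A}$ (which by \cref{pr_Jeroen1} is the maximal dimension of a proper inner ideal inside $\mathcal K_2$) pins down the real form uniquely. For $E_6$ the Witt indices are $4,3,1,0$; for $E_7$ they are $5,3,3,1$; for $E_8$ they are $7,7,3$ --- and in each family these numbers match bijectively with the non-compact real forms via the tables. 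The two asserted coincidences ($\mathbb O_s\otimes\mathbb H$ and $\mathbb O\otimes\mathbb H$ both giving $\mathfrak e_{7,5}$; $\mathbb O_s\otimes\mathbb O_s$ and $\mathbb O\otimes\mathbb O$ both giving $\mathfrak e_{8,8}$) are therefore \emph{confirmed}, not merely unfalsified: each Witt index value corresponds to a unique real form, so your ``main obstacle'' dissolves.

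Your invariant (ii), the Killing signature, would also work as an independent route --- indeed it alone distinguishes all real forms of a fixed complex type, since the second subscript in $\mathfrak e_{n,k}$ is precisely that signature --- but carrying out the piece-by-piece computation is more laborious than necessary and is not what the paper does. Your invocation of \cref{pr_Jeroen2} is muddled: that proposition is stated only for $C_2=\mathbb R\oplus\mathbb R$, and the paper uses it solely in the $E_6$ row (for $\mathbb O\otimes\mathbb C_s$ and $\mathbb O_s\otimes\mathbb C_s$) to exhibit a $16$-dimensional inner ideal; it is not extended to $\mathbb H_s$ or $\mathbb O_s$ as you suggest, nor is it needed there. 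Even in the $E_6$ row its use is a convenience rather than a necessity, since Witt index $0$ already forces $\mathfrak e_{6,-26}$ (the only non-compact real form of $E_6$ whose $8$-dimensional inner ideal contains no smaller one). So there are no ``cases where \cref{pr_Jeroen1} alone does not separate'': drop invariant (ii), drop the attempted extension of \cref{pr_Jeroen2}, compute the four (resp.\ four, three) Witt indices correctly, and read off the answer from the tables.
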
   

\begin{proof}
By \cref{pr_idsE6}, the inner ideals of $\mathfrak{e}_{6,-26}$ have dimensions $8$ and $16$, 
the inner ideals of $\mathfrak{e}_{6,-14}$ have dimensions $1$ and $8$ and 
the inner ideals of $\mathfrak{e}_{6,2}$ have dimensions $1$, $2$, $3$ and $8$.
The inner ideals of $\mathfrak{e}_{6,6}$ have dimensions  $1$, $2$, $3$, $4$, $5$, $8$ and $16$ by \cref{eq_idsE6}.

Take $\mathcal A=\mathbb{O}\otimes\mathbb{C}_s$, with skew-dimension $8$. By \cref{pr_Jeroen2}, the algebra 
$\mathcal K(\mathcal A)$ has an inner ideal of dimension $16$, so that its signature is either $6$ or $-26$. But it is not $6$ since the Witt index of $q_{\mathcal A}$ is $0$, so \cref{pr_Jeroen1} says that $\mathcal K(\mathcal A)$ has not non-zero inner ideals contained properly in that one of dimension $8$ (the split algebra has, on the contrary).

If $\mathcal A=\mathbb{O}_s\otimes\mathbb{C}_s$, the Witt index of $q_{\mathcal A}$ is $4$, so $\mathcal K(\mathcal A)$ has inner ideals of dimensions $1$, $2$, $3$, $4$ contained in $\mathcal K_2$ by \cref{pr_Jeroen1}. Only the split real form satisfies that condition.

If $\mathcal A=\mathbb{O}\otimes\mathbb{C}$, the Witt index of $q_{\mathcal A}$ is $1$ and $\mathcal K(\mathcal A)$ cannot have inner ideals contained properly in $\mathcal K_2$ (of dimension 8) up to those of dimension 1. This forces the signature $-14$.

Now, the Witt index of the Albert form of $\mathbb O_s\otimes\mathbb{C}$ is $3$, and so its Kantor construction gives a real form of signature 2, by similar arguments. This finishes the $E_6$-case. \smallskip

For $E_7$, we only have  to note that      
  the Albert form has  $I_{5,5}$ as a related matrix (in a suitable basis) for $\mathcal A=\mathbb{O}_s\otimes\mathbb{H}_s$, 
$I_{3,7}$ for $\mathcal A=\mathbb{O}_s\otimes\mathbb{H}$, 
$I_{7,3}$ for $\mathcal A=\mathbb{O}\otimes\mathbb{H}$, and
$I_{9,1}$ for $\mathcal A=\mathbb{O}\otimes\mathbb{H}_s$, with Witt indices $5$, $3$, $3$ and $1$ respectively.
\cref{pr_Jeroen1} gives immediately the signature of the Lie algebras constructed from these structurable algebras, once we recall that 
$\mathfrak{e}_{7,-25}$ has only inner ideals of dimensions $1$, $10$ and $27$,  
$\mathfrak{e}_{7,5}$ has   inner ideals of dimensions $1$, $2$, $3$ and $10$, while the split real form has an inner ideal of dimension $5$.   Finally, the $E_8$-case is immediate once one notes that the Witt indices of $C_1\otimes C_2$, with $C_1,C_2\in\{\mathbb{O},\mathbb{O}_s\}$, are $7$ if $C_1=C_2$ and $3$ otherwise.
\end{proof}

\begin{remark}
Having these realizations of the real forms of $E_6$ helps to explain some of their fine gradings, described in \cite{Draper26} and \cite{Draper14}. For instance, an immediate consequence is that all the non-compact real forms of $\mathfrak{e}_{6}$, namely,
$\mathfrak{e}_{6, 6}$, $\mathfrak{e}_{6, 2}$, $\mathfrak{e}_{6, -14}$ and $\mathfrak{e}_{6, -26}$, possess a  $\ZZ\times \ZZ_2^4$-grading which does not admit proper refinements. Similarly, all the non-compact real algebras of type $E_7$ have a  fine $\ZZ\times \ZZ_2^5$-grading and all the non-compact real algebras of type $E_8$ have a  fine $\ZZ\times \ZZ_2^6$-grading. This is clear from two facts:   the complexifications of these gradings are fine \cite{librogradings}, and the $\ZZ$-grading given by the Kantor construction is compatible with the gradings on the composition algebras, where the real composition algebras of dimension $2^k$ are always $\ZZ_2^k$-graded. Note that, in spite of the active work   trying to find all the fine gradings on the simple Lie algebras, see for instance the monograph \cite{librogradings}, there is still quite some work to do in the exceptional real Lie algebras.
\end{remark}

Many of the inner ideals of the real exceptional Lie algebras have appeared as particular cases of Propositions~\ref{pr_Jeroen1} and \ref{pr_Jeroen2}, as we have seen through the above proof. In fact, most of them appear in this way, according to the classification obtained in Section~\ref{section_Explicit classification}. We would like to find also descriptions of the remaining inner ideals which are not in terms of roots, but in terms of a model or explicit construction of the algebra, similarly to the descriptions above. This follows being our philosophy through the next subsections.

\subsection{Inner ideals coming from Jordan algebras}\label{sec_Jor}

If $\mathcal J$ is a Jordan algebra, then $\mathcal{K}_{1}=\mathcal J$ is an abelian inner ideal of $\mathcal{K}(\mathcal J)= \mathcal{K}_{-1}\oplus\mathcal{K}_{0}\oplus\mathcal{K}_{1} $, since it is the corner of a $\ZZ$-grading (the involution is the identity and $\mathcal S=0$). Moreover, the inner ideals of the Jordan algebra provide inner ideals of the related Lie algebra. Recall that a subspace $B\le \mathcal J$ is said an \emph{inner ideal} if $U_B(\mathcal J)\subseteq B$, for the quadratic operator $U_x=U_{x,x}=2L_x^2-L_{x^2}$.

\begin{lemma}\label{le_innerJordan}
The   only inner ideals of  $ \mathcal K( \mathcal J)$ contained in the component $\mathcal K_1$ are the sets
  $\{(b,0):b\in B\} $ for some   inner ideal  $B$   of $\mathcal J$.
\end{lemma}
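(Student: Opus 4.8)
The plan is to identify, for a Lie algebra $\mathcal{K}(\mathcal{J})$ coming from the Tits--Koecher construction on a Jordan algebra $\mathcal{J}$, all inner ideals $B$ with $B \subseteq \mathcal{K}_1 \cong \mathcal{J}$. Since $\mathcal{S} = 0$ here, we have $\mathcal{K}(\mathcal{J}) = \mathcal{K}_{-1} \oplus \mathcal{K}_0 \oplus \mathcal{K}_1$ with $\mathcal{K}_{\pm 1} = \{(a,0)^{(\tilde{\ })}: a \in \mathcal{J}\}$ and $\mathcal{K}_0 = \mathrm{Instr}(\mathcal{J}) = V_{\mathcal{J},\mathcal{J}}$. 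First I would observe that a subspace $B \subseteq \mathcal{K}_1$ corresponds to a subspace $\{(b,0): b \in B_0\}$ for some subspace $B_0 \le \mathcal{J}$, and that, by the $\mathbb{Z}$-grading, $[B,[B,\mathcal{K}(\mathcal{J})]] \subseteq B$ reduces (since $[\mathcal{K}_1,[\mathcal{K}_1,\mathcal{K}_0 \oplus \mathcal{K}_1]] \subseteq \mathcal{K}_3 \oplus \mathcal{K}_4 = 0$) to the single condition $[B,[B,\mathcal{K}_{-1}]] \subseteq B$.

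Next I would compute $[(b,0),[(b',0),(c,0)\tilde{\ }]]$ using the bracket formulas in \eqref{eq_productoenKantor}. With $\mathcal{S}=0$ the term $L_r L_s$ drops out, and $[(a,r),(b,s)\tilde{\ }] = V_{a,b}$ when $r=s=0$; so $[(b',0),(c,0)\tilde{\ }] = V_{b',c} \in \mathcal{K}_0$, and then $[(b,0),V_{b',c}] = -[V_{b',c},(b,0)] = -(V_{b',c}(b),0)$ by the bracket $[T,(a,r)] = (Ta, T^\delta r)$. Hence the inner ideal condition is exactly $V_{b',c}(b) \in B_0$ for all $b,b' \in B_0$ and all $c \in \mathcal{J}$. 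For a commutative algebra with identity involution, $V_{x,y}(z) = (xy)z + (zy)x - (zx)y$, which is the standard Jordan triple product (up to normalization) $\{x,z,y\}$; in particular, taking $b'=b$ gives $V_{b,c}(b) = 2(bc)b - (b\cdot b)c = 2(L_b^2 - \tfrac12 L_{b^2})c \cdot 2 $, i.e. essentially $U_b(c)$ up to a scalar. So I would first show that the condition "$V_{b,c}(b) \in B_0$ for all $b \in B_0, c \in \mathcal{J}$" is equivalent to "$U_{B_0}(\mathcal{J}) \subseteq B_0$", using linearization: $U_{b,b'} = \tfrac12(U_{b+b'} - U_b - U_{b'})$, so the apparently stronger bilinear condition $V_{b',c}(b) \in B_0$ follows from the diagonal one. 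This matches precisely the definition of an inner ideal of $\mathcal{J}$ recalled just before the statement.

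Conversely, if $B_0$ is an inner ideal of $\mathcal{J}$, the same computation together with the linearization identity shows $[B,[B,\mathcal{K}_{-1}]] \subseteq B$, hence $B = \{(b,0): b \in B_0\}$ is an inner ideal of $\mathcal{K}(\mathcal{J})$. The main obstacle, though a minor one, is getting the normalization constants and the exact relation between $V_{x,y}$, the Jordan triple product $\{x,y,z\}$, and $U_x$ correct --- in particular being careful that $V_{b,c}(b)$ for varying $c$ spans the same space as $U_b(\mathcal{J})$ and not something smaller, and that the linearized operators $U_{b,b'}$ do not force extra conditions beyond those defining a Jordan inner ideal. Once the dictionary $V_{x,x}(\cdot) \leftrightarrow 2U_x(\cdot)$ and its polarization are pinned down, both inclusions are immediate from \eqref{eq_productoenKantor} and the reduction to the $\mathcal{K}_{-1}$ case.
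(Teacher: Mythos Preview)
Your proposal is correct and follows essentially the same approach as the paper: reduce via the $\mathbb{Z}$-grading to the single condition $[B,[B,\mathcal{K}_{-1}]]\subseteq B$, compute $[(a,0),[(b,0),(c,0)\tilde{\ }]]=(-V_{b,c}(a),0)$, and identify $V_{b,c}(a)=U_{b,a}(c)$ so that the condition becomes $U_{B,B}(\mathcal J)\subseteq B$, equivalent by polarization to $B$ being a Jordan inner ideal. Your intermediate display ``$2(L_b^2-\tfrac12 L_{b^2})c\cdot 2$'' has a stray factor (in fact $V_{b,c}(b)=U_b(c)$ exactly), but this is cosmetic and does not affect the argument.
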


\begin{proof}
This is evident:  $I=\{(b,0):b\in B\} $ is an inner ideal of  $ \mathcal K( \mathcal J)$ if and only if   $[I,[I,\mathcal K_{-1}]]\subseteq I$ holds, taking   the grading into consideration. But 
$$
{[}(a,0),[(b,0),(c,0)\tilde{\ } ]]=(-V_{b,c}(a),0)=(-U_{b,a}(c),0)
$$
belongs to $I$ if and only if  $U_{B,B}(\mathcal  J)\subseteq B$.
\end{proof}

Take $\mathcal J=H_3(C)$ the Jordan algebra of Hermitian matrices of size 3 with coefficients in the complex Cayley algebra $C$ with the symmetrized product $a\circ b=\frac{ab+ba}2$, the so called 
 \emph{Albert algebra}. The Lie algebra obtained when applying the TKK-construction is the complex Lie algebra of type $E_7$ \cite[Proposition~16]{libroJacobson2}. Thus,   an inner ideal of dimension 27 
 of   the complex Lie algebra $\mathfrak{e}_7$ is given by the 1-part of the 3-grading, which can be identified with this Albert algebra.
 
  If we consider any real form of the Albert algebra, its TKK-construction will give a real form of the complex algebra $\mathfrak{e}_7$, also with an inner ideal of dimension $27$. In particular, the TKK-construction applied to  $H_3(\OO_s)$ and $H_3(\OO)$  has to give the real forms of signature either $7$ or $-25$  according to \cref{se_tipoe7}, because these real forms are the only ones with inner ideals of dimension 27. In order to distinguish the obtained signature, we could check that the construction used by Jacobson in \cite[Table~(143), row b)]{libroJacobson2} coincides with ours, what would imply $\mathcal{K}(H_3(\OO_s))\cong\mathfrak{e}_{7,7}$ and $\mathcal{K}(H_3(\OO))\cong\mathfrak{e}_{7,-25}$. But note that we can alternatively use our knowledge on the remaining inner ideals. 
Take an idempotent $e\in\OO_s$ and, as in \cite[Main Theorem, iv)]{innerJordan}, consider 
 $$
 B=\left\{\begin{pmatrix}\lambda&a&\nu e\\
 \bar a&0&0\\\nu\bar e&0&0\end{pmatrix}:\lambda,\nu\in\RR,a\in e\OO_s\right\}.
 $$
 Then $B$ is an inner ideal of dimension $6$ of the Jordan algebra $H_3(\OO_s)$ which provides a $6$-dimensional inner ideal of $\mathcal{K} (H_3(\OO_s))$.
 Consequently $\mathcal{K} (H_3(\OO_s))$ is the split real form $\mathfrak{e}_{7,7}$. This 6-dimensional inner ideal is formed uniquely by extremal elements (clear from \cite{Draper2012}, since an element is extremal if it is after complexification). But $\mathcal{K} (H_3(\OO))$ cannot be the split real form, because \cite[Theorem~8]{innerJordan} says that the inner ideals of $H_3(\OO)$ formed by extremal elements are necessarily one-dimensional. By \cref{le_innerJordan}, $\mathcal{K} (H_3(\OO))$ does not have a six-dimensional inner ideal. Hence, its signature is $-25$.\smallskip

 Now we deal with the inner ideals not appeared so far, which    all satisfy the following property: all their non-zero elements are extremal.
  
  %%%%%%%%%%%%%%%%%%%%%%%%%%%%%%%%%%%%%%
 
\subsection{Point line spaces (and some more linear models)}\label{sec_ps}

In order to describe the remaining inner ideals, we will use some ad-hoc constructions of the exceptional (split) Lie algebras based on linear and multilinear algebra, following the philosophy of \cite{modelosF4}.

\begin{definition}
A non-zero subspace $P$ of a Lie algebra $\g$  is said to be a \emph{point space} if $[P,P]=0$ and every  non-zero element in $P$ is an extremal element. (Recall Definition~\ref{def_extremal element}.)
\end{definition}

Point spaces were introduced in \cite{Benkart2009}, in order to complete the missing cases in \cite{inner76}. The following  is hence well-known.

\begin{lemma}
If $P$ is a point space of $\g$, then $P$ is an abelian inner ideal and every subspace of $P$ so is.
\end{lemma}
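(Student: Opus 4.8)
The plan is to deduce the whole statement from the single inclusion $[P,[P,\g]]\subseteq P$. Indeed, $P$ is abelian by hypothesis, and so is every subspace $Q\le P$, since $[Q,Q]\subseteq[P,P]=0$. Moreover every \emph{non-zero} $Q\le P$ is itself a point space: $[Q,Q]\subseteq[P,P]=0$, and each non-zero element of $Q$ lies in $P$ and is therefore extremal. Hence it suffices to prove that an arbitrary point space $P$ is an inner ideal, and then to apply this both to $P$ and to each non-zero subspace of $P$ (the zero subspace being trivially an abelian inner ideal).

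First I would fix $e,f\in P$ and $x\in\g$ and aim at $[e,[f,x]]\in P$. The main tool is a polarization of the extremality condition: for every non-zero $a\in P$ one has $[a,[a,x]]\in\langle a\rangle\subseteq P$, and trivially $[a,[a,x]]=0\in P$ when $a=0$. Applying this to $a=e$, $a=f$ and $a=e+f$, and expanding $[e+f,[e+f,x]]$ by bilinearity, the cross terms yield
\[
[e,[f,x]]+[f,[e,x]]=[e+f,[e+f,x]]-[e,[e,x]]-[f,[f,x]]\in P .
\]

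Next I would use that $[P,P]=0$, so $[e,f]=0$, whence the Jacobi identity gives $[e,[f,x]]=[f,[e,x]]$. Substituting this into the displayed identity yields $2\,[e,[f,x]]\in P$, and since the ground field has characteristic different from $2$ (it is $\RR$ here, in particular of characteristic zero) we conclude $[e,[f,x]]\in P$. Thus $[P,[P,\g]]\subseteq P$, so $P$ is an inner ideal, and combined with the reduction above the lemma follows.

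I do not anticipate a real obstacle: the only delicate point is that extremality is postulated only for non-zero elements and locates $[a,[a,x]]$ merely in the line $\langle a\rangle$ rather than in $P$ directly; it is precisely the polarization together with the vanishing of $[e,f]$ that upgrades this to the inner-ideal inclusion, at the harmless cost of dividing by $2$.
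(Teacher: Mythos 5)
Your proof is correct and is essentially identical to the paper's: both use the polarization identity $[e,[f,x]]+[f,[e,x]]=[e+f,[e+f,x]]-[e,[e,x]]-[f,[f,x]]$ together with the Jacobi identity and $[e,f]=0$ to obtain $2[e,[f,x]]\in P$, then divide by $2$. The paper likewise notes the degenerate case $e+f=0$ and disposes of the subspace statement by observing that non-zero subspaces of a point space are point spaces.
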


\begin{proof} Let us take $0\ne x,y\in P$ and $z\in\g$ and let us check that $[x,[y,z]]\in P$. As $[x,y]=0$, the Jacobi identity implies that $[x,[y,z]]=[y,[x,z]]$ so that 
$$
 [x,[y,z]]=\frac12\big([x,[y,z]]+[y,[x,z]]\big)=\frac12\big([x+y,[x+y,z]]-[x,[x,z]]-[y,[y,z]]\big),  
$$
which belongs to $ \langle x,y\rangle\subseteq P$ because $x$, $y$ and $x+y$ are extremal elements (except for  $x+y=0$,  in which case the result is   clear too).
Thus  $[P,[P,\g]]\subseteq P$. 

Moreover, it is evident that every non-zero  subspace of $P$ is also a point space, and hence an abelian inner ideal. 
\end{proof}

Now we will describe a point space of dimension 8 of $\mathfrak{e}_{8,8}$, a point space of dimension 7 of $\mathfrak{e}_{7,7}$ and a point space of dimension 5 of $\mathfrak{e}_{6,6}$. The fact that they are point spaces appears explicitly in  \cite{Draper2012}. These point spaces and their subspaces will exhaust all the abelian inner ideals of the split exceptional Lie algebras not appeared in Subsections~\ref{sec_tensorpro} and \ref{sec_Jor}. 

%%%%%%%%%%%%%%%%%%%%%%%%%%%%%%%%%%%%

\subsubsection{A linear model of $\mathfrak{e}_{8,8}$}  

Take $U$ a complex or real vector space of dimension $8$ and consider the graded vector space
$$
\g=\g_{-3}\oplus\g_{-2}\oplus\g_{-1}\oplus\g_{0}\oplus\g_{1}\oplus\g_{2}\oplus\g_{3}
$$
given by  
\begin{equation}\label{eq_linearmodele8}
\g_{3}=U;\ 
\g_{2}=\Lambda^6U;\ 
\g_{1}=\Lambda^3U;\ 
\g_{0}=\mathfrak{gl}(U);\ 
\g_{-1}=\Lambda^5U;\ 
\g_{-2}=\Lambda^2U;\ 
\g_{-3}=\Lambda^7U.
\end{equation}
Now $\g$ can be endowed with a Lie algebra structure of type $E_8$ (over $\CC$ or $\RR$, respectively). Indeed, think first of the complex 
Lie algebra $  L$ of type $E_8$, $H$ a Cartan subalgebra, $  L=H\oplus(\oplus_{\alpha\in \Phi}  L_\alpha)$ the root decomposition relative to $H$, and $\{\alpha_1,\dots,\alpha_8\}$ a set of simple roots (with labelling as in Section~\ref{se_losdiagramasconcretos}). There is a $\ZZ$-grading on $  L$ with homogeneous components given by:
$$\begin{array}{l}
  L_n=\oplus\{  L_\alpha:\alpha=\sum_{i=1}^8p_i\alpha_i,\  p_2=n\},\vspace{2pt}\\
  L_0=H\oplus \big(\oplus\{  L_\alpha:\alpha=\sum_{i=1}^8p_i\alpha_i,\   p_2=0\} \big).
\end{array}
$$
A glimpse of the roots gives us that this grading has 7 pieces and the related dimensions are $8$, $28$, $56$, $64$, $56$, $28$ and $8$ respectively (necessarily $  L_n$ and $  L_{-n}$ are dual one of each other through the Killing form $\kappa$). This fits with the fact that $  L_0$ is a reductive subalgebra which is the sum of a  one-dimensional center and a semisimple subalgebra whose Dynkin diagram is obtained when removing the $2^{\mathrm{nd}}$-node of the Dynkin diagram of $E_8$, that is, a simple Lie algebra of type $A_7$. 
Thus $  L_0$  is a Lie algebra isomorphic to $\mathfrak{gl}(8,\CC)$ and to $\mathfrak{gl}(U)=\g_0$ when our $8$-dimensional vector space is considered over the complex numbers. Also, according to \cite[Chapter~8]{libroKac}, $   L_{n}$ is an irreducible $  L_{0}$-module.
By dimension count, for each $n$ there is an obvious identification among $  L_{n}$ and either $\g_n$ or $\g_{-n}$   compatible with the actions of $  L_{0}$ and $\g_0$ respectively. 
Changing $U$ by its dual if necessary, we can assume that  the $L_0$-module $L_1$  is isomorphic to the  $\mathfrak{gl}(U)$-module $\Lambda^3U=\g_1$.
The fact that $\dim_\CC\hom(  L_{n}\otimes  L_{m}, L_{n+m})=1$ (\cite[Chapter~8]{libroKac} too) gives our choices for the remaining exterior powers of $U$. 
We mean, the $L_0$-modules $L_n$  are now isomorphic to the  $\mathfrak{gl}(U)$-modules $\g_n=\Lambda^{[3n]_8}U$ where $[n]_8$ denotes here $n$ modulo $8$.
Now, the isomorphism of vector spaces between $  L$ and $\g$ induces a structure of (complex) Lie algebra on $\g$. It is clear that the bracket $[\g_{n},\g_{m}]\subseteq \g_{n+m}$ is determined up to scalar by the only $\mathfrak{gl}(U)$-invariant map $\Lambda^{[3n]_8}U\times\Lambda^{[3m]_8}U\to \Lambda^{[3(n+m)]_8}U$ if $n+m\ne0$. And, if $n+m=0$, determined up to scalar by the only $\mathfrak{gl}(U)$-invariant map $\Lambda^{[3n]_8}U\times\Lambda^{[3n]_8}U^*\to \mathfrak{gl}(U)$.
The concrete scalars can be determined (once we have fixed such invariant maps) by imposing the Jacobi identity to $(\g,[\ ,\ ])$ instead of passing through the explicit isomorphism, as in \cite{modelosF4}.  
In any case, it is possible to have more information on the pieces of this grading without computing the concrete scalars. 

\begin{lemma}
$\g_{3}=U$ is a point space of $\g$.
\end{lemma}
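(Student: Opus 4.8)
The plan is to show that every nonzero element of $\g_3=U$ is an extremal element, since $[\g_3,\g_3]=0$ is automatic: the bracket $[\g_3,\g_3]$ would land in $\g_6=0$ because the grading has only seven pieces (indices $-3,\dots,3$). So the whole content is the extremality of each $0\ne u\in U$, that is, $[u,[u,\g]]\subseteq\langle u\rangle$. Using the $\ZZ$-grading, $[u,[u,\g_m]]\subseteq\g_{6+m}$, which is automatically zero unless $6+m\in\{-3,\dots,3\}$, i.e. unless $m\in\{-3,\dots,-3\}$; wait, $6+m\le 3$ forces $m\le -3$, and $m\ge -3$, so the only possibly nonzero contribution is $[u,[u,\g_{-3}]]\subseteq\g_3=U$. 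Hence I only need to check that $[u,[u,w]]\in\langle u\rangle$ for $u\in U=\g_3$ and $w\in\g_{-3}=\Lambda^7U$.

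First I would make the invariant maps explicit enough to compute. The bracket $\g_3\times\g_{-3}\to\g_0=\gl(U)$ is, up to a nonzero scalar, the unique $\gl(U)$-equivariant map $U\times\Lambda^7U\to\gl(U)$; concretely, after the canonical identification $\Lambda^7U\cong U^*\otimes\Lambda^8U$ and fixing a volume form to trivialise $\Lambda^8U$, an element $w\in\Lambda^7U$ corresponds to some $\phi\in U^*$, and $[u,w]$ is (a scalar multiple of) the rank-one endomorphism $x\mapsto \phi(x)u$ plus possibly a multiple of $\phi(u)\,\id_U$; equivariance pins it down to the form $[u,w]=c\big(\phi\otimes u\big)+c'\phi(u)\id_U$ for constants $c,c'$ depending only on the structure constants. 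Then the bracket $\g_0\times\g_3\to\g_3$ is the natural action of $\gl(U)$ on $U$, up to scalar: $[T,u']=d\,T(u')$ for $T\in\gl(U)$, $u'\in U$. Composing, $[u,[u,w]]=[\,c(\phi\otimes u)+c'\phi(u)\id_U\,,u]=d\big(c\,\phi(u)\,u+c'\phi(u)\,u\big)=d(c+c')\phi(u)\,u\in\langle u\rangle$, as required. The precise values of $c,c',d$ are irrelevant: whatever they are, the output is a scalar times $u$.

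The one subtlety I would address carefully is that I am invoking the uniqueness statements $\dim\Hom(L_n\otimes L_m,L_{n+m})=1$ and $\dim\Hom(L_n\otimes L_{-n},L_0)=1$ (quoted from \cite[Chapter~8]{libroKac} in the text) to justify that the brackets really have the claimed $\gl(U)$-equivariant form, and that $\Hom_{\gl(U)}(U\otimes\Lambda^7U,\gl(U))$ is spanned by the two maps $\phi\otimes u\mapsto(\phi\otimes u)$ and $\phi\otimes u\mapsto\phi(u)\id_U$ (which is a standard Schur-type computation, since $U\otimes\Lambda^7U\cong U\otimes U^*\cong\gl(U)\cong\mathfrak{sl}(U)\oplus\CC\id_U$ as $\gl(U)$-modules, giving exactly a two-dimensional space of equivariant maps into $\gl(U)$). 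Granting these, the computation above is forced.

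The main obstacle is really just bookkeeping rather than a genuine difficulty: one must be sure that the chosen identifications $\g_n\cong\Lambda^{[3n]_8}U$ and the normalisations of the volume form are consistent with the bracket actually placed on $\g$, so that $[u,[u,w]]$ genuinely lands in $\langle u\rangle$ and not merely in $\g_3$ with an uncontrolled component. But since $\g_3$ is an irreducible $\g_0$-module and the only $\gl(U)$-submodule of $U$ that can contain $[u,[u,w]]$ for a fixed $u$ (as $w$ varies, with $u$ fixed and nonzero) with the equivariance under the stabiliser of $u$ in $\gl(U)$ is the line $\langle u\rangle$, the conclusion is robust. So I would present the short grading argument to reduce to $m=-3$, then the two-line equivariant computation, citing \cite{libroKac} for the multiplicity-one facts and \cite{Draper2012} for the fact (already noted in the text) that these pieces consist of extremal elements as an independent cross-check.
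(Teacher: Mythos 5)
Your proof is correct and follows essentially the same route as the paper: reduce by the $\ZZ$-grading to showing $[u,[u,\g_{-3}]]\subseteq\langle u\rangle$, identify $\g_{-3}=\Lambda^7U$ with $U^*$, and use $\gl(U)$-equivariance to pin down the bracket so that $[u,[u,\alpha]]$ is a multiple of $\alpha(u)u$. Your extra care in allowing a $\phi(u)\id_U$ component in $[u,w]$ (since $\Hom_{\gl(U)}(U\otimes U^*,\gl(U))$ is two-dimensional, not one-dimensional) is a small but genuine refinement of the paper's ``up to scalar'' phrasing, and as you observe it does not affect the conclusion.
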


As we know from  \cite{Draper2012} that any inner ideal of dimension 8 should be a point space, and $\g_{3}$ is an inner ideal because it is the corner of a $\mathbb{Z}$-grading, this lemma would not require a proof. But we think that a direct argument provides deeper insight.

\begin{proof}
If we think of $\g_{-3}$ as $U^*$, take $0\ne u\in U $ and $\alpha\in U^*$ and let us check that $u$ is an extremal element: $[u,[u,\g]]\subseteq \CC u$. Taking the grading into consideration, it is enough to prove that $[u,[u,\g_{-3}]]\subseteq \CC u$, since $[u,[u,\g_n]]\subseteq \g_{n+6}$ vanishes when $n\ne -3$. Up to scalar, $[u,\alpha]$ corresponds to the map in $\mathfrak{gl}(U)$ given by $v\mapsto \alpha(v)u$ for any $v\in U$. As the action of $\mathfrak{gl}(U)$ on $\g_3=U$ is the natural action, also up to scalar $[u,[u,\alpha]]$  should coincide with $-\alpha(u)u\in\CC u$.
\end{proof}

If we now  consider $U$ as a real vector space, the  same construction \eqref{eq_linearmodele8} gives a real Lie algebra of type $E_8$ which possesses  an inner ideal of dimension $8$ (and also inner ideals of each dimension less than 8), so that the description of inner ideals in \cref{sub_e8} implies that the constructed real form $\g$ is necessarily of split type, $\mathfrak{e}_{8,8}$. 

%%%%%%%%%%%%%%%%%%%%%%%%%%%%%%%%%%%%
 
\subsubsection{A linear model of $\mathfrak{e}_{7,7}$}

Similarly to the previous case, the $\ZZ$-grading  on the complex Lie algebra $L$ of type $E_7$ (and also, on the real Lie algebra  $\mathfrak{e}_{7,7}$) produced by choosing the second node is 
$$\begin{array}{l}
   L_n=\oplus\{ L_\alpha:\alpha=\sum_{i=1}^7p_i\alpha_i, \  p_2=n\},\vspace{2pt}\\
 L_0=H\oplus\big(\oplus\{ L_\alpha:\alpha=\sum_{i=1}^7p_i\alpha_i, \  p_2=0\} \big).
\end{array}
$$
This is a $5$-grading, since the maximal root   $2\alpha_1+2\alpha_2+3\alpha_3+4\alpha_4+3\alpha_5+2\alpha_6+\alpha_7$ has 2 as the coefficient of $\alpha_2$. Looking at how many roots have coefficient of $\alpha_2$ equal to $\pm2,\pm1,0$, we get dimensions of the homogeneous components equal to $7$, $35 $ and $49$, respectively. Also, looking at the Dynkin diagram obtained when removing the second node, we know $ L_0\cong\mathfrak{gl}(7,\CC)$. So there is a Lie algebra structure on the following  $\ZZ$-graded vector space.
Take $U$ a vector space of dimension 7, over the complex numbers or over the real ones, and consider
$$
\g= \g_{-2}\oplus\g_{-1}\oplus\g_{0}\oplus\g_{1}\oplus\g_{2} 
$$
given by  
\begin{equation*}\label{eq_linearmodele7}
\g_{2}=U,\quad 
\g_{1}=\Lambda^4U,\quad 
\g_{0}=\mathfrak{gl}(U),\quad
\g_{-1}=\Lambda^3U,\quad 
\g_{-2}=\Lambda^6U,
\end{equation*}
with the above explained Lie algebra structure induced  by the isomorphism with $ L$.
The same arguments as above yield 

\begin{lemma}
The vector space of dimension $7$ given by   
$\g_{2}=U$ is a point space of $\g$.
\end{lemma}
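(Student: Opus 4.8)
The plan is to mimic the proof given for the analogous statement about $\mathfrak{e}_{8,8}$, namely the lemma asserting that $\g_3=U$ is a point space there. The structure of the argument is identical; only the exponents and the number of graded pieces change. First I would observe that, since $\g$ is $5$-graded, for $0\ne u\in \g_2=U$ the bracket $[u,[u,\g_n]]$ lands in $\g_{n+4}$, which is non-zero only when $n=-2$. Hence it suffices to check $[u,[u,\g_{-2}]]\subseteq \langle u\rangle$. Identifying $\g_{-2}=\Lambda^6 U\cong U^*$ (via the top exterior power $\Lambda^7 U$, a fixed volume form), write $\alpha$ for the element of $U^*$ corresponding to a given element of $\Lambda^6 U$.

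Next I would use that all the bracket maps $[\g_n,\g_m]\to\g_{n+m}$ are, by the one-dimensionality of the relevant spaces of $\mathfrak{gl}(U)$-invariant maps (cited from \cite{libroKac} in the $E_8$ discussion and reproduced for $E_7$), determined up to a non-zero scalar by the obvious multilinear-algebra contraction. In particular $[u,\alpha]\in\g_0=\mathfrak{gl}(U)$ is, up to scalar, the rank-one endomorphism $v\mapsto \alpha(v)u$; and the action of $\g_0=\mathfrak{gl}(U)$ on $\g_2=U$ is (up to scalar, and after possibly replacing $U$ by $U^*$ as in the $E_8$ model) the natural action. Therefore $[u,[u,\alpha]]$ equals, up to a non-zero scalar, the image of $u$ under that rank-one map, i.e. a scalar multiple of $\alpha(u)\,u$, which lies in $\langle u\rangle$. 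This shows every non-zero $u\in \g_2$ is extremal, and $[\g_2,\g_2]\subseteq\g_4=0$, so $\g_2$ is a point space by Definition of point space.

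As with the $E_8$ lemma, I would add the remark that strictly speaking no proof is needed: $\g_2=U$ is an inner ideal because it is the corner of a $\ZZ$-grading, and \cite{Draper2012} already tells us that any $7$-dimensional abelian inner ideal of a Lie algebra of type $E_7$ is a point space; but the direct computation is included for transparency, exactly as before. The only mild subtlety — and the one place where the argument must be stated with care rather than simply copied — is ensuring the relevant $\mathfrak{gl}(U)$-equivariant contraction $\Lambda^4 U \times \Lambda^6 U \to \Lambda^3 U$ composed appropriately recovers, up to scalar, the rank-one endomorphism picture; this is the analogue of the exponent bookkeeping $[3n]_8$ in the $E_8$ case and is routine here since there are only the two maps $\Lambda^4U\times U^*\to\Lambda^3 U$ (contraction) and $\Lambda^3U\times\Lambda^4 U\to \Lambda^7 U\cong\CC$ (wedge) to track. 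I do not expect any genuine obstacle; the main point is simply to present the bracket computation cleanly and invoke the uniqueness-of-invariants fact already established for this model.
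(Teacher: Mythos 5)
Your proof is correct and is exactly the argument the paper intends: the paper simply writes ``The same arguments as above yield'' the lemma, deferring to the $E_8$ computation, and you have carried out that transposition faithfully (the grading reduces everything to $[u,[u,\g_{-2}]]$ with $\g_{-2}=\Lambda^6U\cong U^*$, the invariant-map uniqueness identifies $[u,\alpha]$ with the rank-one endomorphism $v\mapsto\alpha(v)u$ up to scalar, and $[\g_2,\g_2]\subseteq\g_4=0$ gives abelianness). No gaps; this matches the paper's approach.
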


This corresponds with the inner ideal named $\mathcal{B}_{\{2\}}$ of $\mathfrak{e}_{7,7}$, and their proper  subspaces of dimension at least $4$  correspond to the inner ideals $\mathcal{B}_{\{2,7\}}\supseteq\mathcal{B}_{\{2,6,7\}}\supseteq\mathcal{B}_{\{2,5,6,7\}}$, so that this completes, joint with 
the inner ideals   appeared above, the list of inner ideals of $\mathfrak{e}_{7,7}$.

\begin{remark}
In this case we can give an alternative description of an inner ideal of $\mathfrak{e}_{7,7}$ which is a point space of dimension 7.
Recall that, as the $\ZZ$-grading has 5 pieces, it could be the TKK-construction of some structurable algebra. 
This is indeed the case,   the structurable algebra of dimension $35$ is the Smirnov algebra \cite{Smirnov35}. Thus, the point space of dimension 7 can be understood as the set of the skew-symmetric elements of the Smirnov algebra.
\end{remark}

\subsubsection{A linear model of $\mathfrak{e}_{6,6}$}

In this case, the considered $\ZZ$-grading  on the complex Lie algebra of type $E_6$ (and also, on the real Lie algebra  $\mathfrak{e}_{6,6}$) is produced by choosing the third labelled node. So, after choosing a Cartan subalgebra and a set of simple roots $\{\alpha_i\}_{i=1}^6$ of the related root system, the homogeneous components are 
$$\begin{array}{l}
 L_n=\oplus\{ L_\alpha:\alpha=\sum_{i=1}^6p_i\alpha_i, \  p_3=n\},\vspace{2pt}\\
 L_0=H\oplus\big(\oplus\{ L_\alpha:\alpha=\sum_{i=1}^6p_i\alpha_i, \  p_3=0\} \big).
\end{array}
$$
This decomposition $L=\oplus_{n\in\ZZ} L_n$ is again a $5$-grading, since the coefficient of $\alpha_3$ of the maximal root $\alpha_1+2\alpha_2+2\alpha_3+3\alpha_4+2\alpha_5+\alpha_6$ is $2$. The $0$-part is the sum of a one-dimensional center and a semisimple algebra, which is the sum of two simple ideals of types $A_4$ and $A_1$. The dimensions of the non-neutral  homogeneous components, by counting roots, are $5$ and $20$.
The considerations about their irreducibility   as $L_0$-modules   work as in the previous cases, although in this case an irreducible module for a semisimple algebra (the derived algebra $[L_0,L_0]$) has to be a tensor product of irreducible modules for each simple component. A model adapted to this situation is the following.

Take $U$ a real vector space of dimension $5$ (not 6) and $V$ a real vector space of dimension $2$. Take
$$
\g= \g_{-2}\oplus\g_{-1}\oplus\g_{0}\oplus\g_{1}\oplus\g_{2} 
$$
given by  
\begin{equation*}\label{eq_linearmodele6}
\g_{2}=U\otimes \RR,\quad 
\g_{1}=\Lambda^3U\otimes V,\quad 
\g_{0}=\mathfrak{gl}(U)\oplus \mathfrak{sl}(V),\quad
\g_{-1}=\Lambda^2U\otimes V,\quad 
\g_{-2}=\Lambda^4U\otimes \RR,
\end{equation*}
with the Lie algebra structure such that $\g$ is a simple real Lie algebra (the uniqueness of the bracket between components is again obtained by passing to the complexification and using then the results in  \cite[Chapter~8]{libroKac}). 
We prefer to write $\g_{2}=U\otimes \RR$ instead the nicer but equivalent expression $\g_{2}=U$ for emphasizing that the action of $\mathfrak{sl}(V)$ on   $\g_2$ is trivial. 
Once more we have

\begin{lemma}
The vector space of dimension $5$ given by 
$\g_{2}=U\otimes \RR$ is a point space of $\g$.
\end{lemma}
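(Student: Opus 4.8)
The plan is to argue just as in the analogous lemmas above for $\mathfrak{e}_{8,8}$ and $\mathfrak{e}_{7,7}$, using only the $\ZZ$-grading and the $\g_0$-module structure of its homogeneous pieces, and never the explicit structure constants. First I record that $\g_2$ is abelian: since the grading is supported in degrees $-2,\dots,2$ one has $[\g_2,\g_2]\subseteq\g_4=0$. So it only remains to check that every $0\ne x\in\g_2$ is extremal, that is $[x,[x,\g]]\subseteq\langle x\rangle$.

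Fix $x=u\otimes 1\in\g_2=U\otimes\RR$ with $0\ne u\in U$. Because $[x,[x,\g_n]]\subseteq\g_{n+4}$ vanishes unless $n=-2$ (the grading being supported in degrees $-2,\dots,2$), it suffices to show $[x,[x,\g_{-2}]]\subseteq\langle x\rangle$. For $\beta\otimes 1\in\g_{-2}=\Lambda^4U\otimes\RR$ set $T:=[x,\beta\otimes 1]\in\g_0=\mathfrak{gl}(U)\oplus\mathfrak{sl}(V)$. The bracket $\g_2\times\g_{-2}\to\g_0$ is $\g_0$-equivariant, hence in particular $\mathfrak{sl}(V)$-equivariant; since $\mathfrak{sl}(V)$ acts trivially on both $\g_2$ and $\g_{-2}$ (this is what the factors $\RR$ record), its image consists of elements of $\g_0$ fixed by the adjoint action of $\mathfrak{sl}(V)$, which are exactly the elements of $\mathfrak{gl}(U)$, because $\mathfrak{gl}(U)$ and $\mathfrak{sl}(V)$ commute and $\mathfrak{sl}(V)$ has trivial center. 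Thus $T\in\mathfrak{gl}(U)$ and $[x,[x,\beta\otimes 1]]=-[T,x]=-(Tu)\otimes 1$, where $T$ acts on $\g_2$ through the natural action of $\mathfrak{gl}(U)$ on $U$.

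It remains to check that $Tu\in\langle u\rangle$. Choosing a volume form $\omega\in\Lambda^5U$ identifies $\Lambda^4U$ with $U^*$ via $v\wedge\beta=\psi_\beta(v)\,\omega$; under this identification the space of $\mathfrak{sl}(U)$-equivariant maps $U\otimes\Lambda^4U\to\End(U)$ is spanned by $u\otimes\beta\mapsto(v\mapsto\psi_\beta(v)u)$ and $u\otimes\beta\mapsto\psi_\beta(u)\,\id_U$. Hence $T$ is a scalar multiple of the rank-one operator $v\mapsto\psi_\beta(v)u$, plus possibly a multiple of $\psi_\beta(u)\,\id_U$; in either case $Tu\in\langle u\rangle$, so $[x,[x,\beta\otimes 1]]\in\langle x\rangle$ and $\g_2$ is a point space.

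The verification itself is immediate; the only step needing any care is the module bookkeeping behind the third paragraph, namely recognizing each $\g_n$ as a $\g_0$-module and observing that the relevant $\hom$-space is at most two-dimensional. One can also shortcut this, exactly as the authors do for $\mathfrak{e}_{7,7}$, by quoting \cite{Draper2012}: $\g_2$ is the corner of a $\ZZ$-grading, hence an abelian inner ideal, and an inner ideal of this dimension in the split form is automatically a point space.
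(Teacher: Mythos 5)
Your proof is correct and follows essentially the same route as the paper: reduce to $[x,[x,\g_{-2}]]$ via the grading, identify the bracket $\g_2\times\g_{-2}\to\g_0$ as an invariant map into $\mathfrak{gl}(U)$, and observe that the resulting operator preserves $\langle u\rangle$ (the paper also records your closing shortcut via \cite{Draper2012}). If anything you are more careful than the text, both in justifying that the bracket lands in the $\mathfrak{gl}(U)$ summand and in noting that the relevant $\Hom$-space is two-dimensional rather than one-dimensional, with both generators still sending $u$ into $\langle u\rangle$.
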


The argument follows being that the only invariant map $U\otimes U^*\to \mathfrak{gl}(U)$ is given by $u\otimes\alpha\mapsto(v\mapsto \alpha(v)u)$, up to scalar multiple.  Alternatively, $\g_{2}$ is an inner ideal since it is a corner of a $\ZZ$-grading, which should be  a point space because every inner ideal of dimension $5$ of $E_6$ is a point space.

This point space cannot be described as the set of skew-symmetric elements of a structurable algebra, since there is no  central  simple structurable algebra with skew-dimension $5$ (a simple case-by-case computation from \cite[Theorem~11]{Allison1979}). 

One of the applications of this viewpoint is to provide models to describe the gradings of the exceptional Lie algebras, so, in some sense, a method to look for symmetries. Consult \cite{typeE} in this line, where Kantor's construction is applied to a structurable algebra of dimension 20 to get a $\ZZ$-grading on $\mathfrak{e}_6$ and on some of its real forms, but such algebra structurable algebra has skew-dimension $1$ (not $5$).

%%%%%%%%%%%%%%%%%%%%%%%%%%%%%%%%%%%%%%%%%%%%%%%%%%%%%%%%
\subsection{Some comments on Jordan pairs}\label{se_JP}

For completeness, we relate our results with  Jordan pairs. 
Real simple Jordan pairs are classified in \cite[\S11.4]{libroLoos}, where the study of real bounded symmetric domains is reduced to that one of real   Jordan pairs.
As is explained in \cite[\S12.5]{libroAntonio}, abelian  inner ideals of Lie algebras and inner ideals of Jordan pairs are essentially the same mathematical object. More precisely, 
 if $B$ is an abelian inner ideal of a Lie algebra $L$, then $\mathrm{Sub}_LB:=V=(V^+,V^-)$, for $V^+=B$ and 
$V^-=L/\mathrm{Ker}_LB$, is a Jordan pair called the \emph{subquotient} of $B$. Recall that $\mathrm{Ker}_LB:=\{x\in L:[B,[B,x]]=0\}$. Now the inner ideals of $L$ contained in $B$ are the inner ideals of $V$ contained in $V^+$ \cite[Proposition~11.47i)]{libroAntonio}. We look at the exceptional cases in the light of Jordan pairs. Recall that any Jordan pair has attached a Lie algebra by a construction also called   TKK-construction.

In the complex case there is (up to conjugation) only one maximal inner ideal  of the complex Lie algebra of type $E_6$, with dimension $16$. It has as subquotient  
the \emph{Bi-Cayley pair} $V=(\mathrm{Mat}_{1\times2}(C),\mathrm{Mat}_{2\times1}(C))$ for $C$ the complex Cayley algebra, where the $Q$-operator is $Q_ab=(ab)a$. 
Moreover, the TKK-construction  applied to the Bi-Cayley pair is again the Lie algebra of type $E_6$. 
In the real case there are two real forms of the Bi-Cayley pair. The  TKK-construction of the Jordan pair 
$V=(\mathrm{Mat}_{1\times2}(\OO),\mathrm{Mat}_{2\times1}(\OO))$ gives $\mathfrak{e}_{6,-26}$, since $\mathfrak{e}_{6,2}$ and $\mathfrak{e}_{6,-14}$ have no inner ideals of dimension $16$, while $\mathfrak{e}_{6,6}$ has inner ideals of dimensions from $1$ to $5$, in particular it has point spaces of dimension $1$, which is not the case for $V$. On the other hand, it is clear that   the TKK-construction of the Jordan pair $(\mathrm{Mat}_{1\times2}(\OO_s),\mathrm{Mat}_{2\times1}(\OO_s))$ gives $\mathfrak{e}_{6,6}$, again taking into account the lattice of inner ideals. 
For the other real forms, recall that if $q\colon X\to\FF$ is a non-degenerate quadratic form on a vector space $X$ over a field $\FF$, then $(X,X)$ is Jordan pair called  \emph{Clifford pair}, where the $Q$-operator is given by $Q_xy=q(x,y)x-q(x)y$. We refer to this Jordan pair as $\mathcal Q_n$ ($n=\dim X$)  when there is no ambiguity in the signature of $q$. 
The lattice of the proper inner ideals of $\mathfrak{e}_{6,2}$ (resp. $\mathfrak{e}_{6,-14}$) coincides with the lattice of inner ideals of the Clifford Jordan pair defined by a quadratic form of Witt index $3$ (resp. $1$) on a real vector space of dimension $8$.

The complex Lie algebra of type $E_7$ can be seen as the TKK-construction of the \emph{Albert pair} $V=(\mathcal J,\mathcal J)$ for $\mathcal J=H_3(C)$ and $C$ the complex Cayley algebra. There are two  maximal inner ideals, one with dimension  $27$ and the other one a point space of dimension $7$. The subquotient of  the first one is the Albert pair. Similar arguments to \cref{sec_Jor} allow us to conclude that $\mathfrak{e}_{7,-25}$ (resp. $\mathfrak{e}_{7,7}$) can be obtained as the TKK-construction of the Albert pair given by $H_3(\OO)$ (resp. $H_3(\OO_s)$). Lastly, the lattice of the proper inner ideals of $\mathfrak{e}_{7,5}$ coincides with that one of   the Clifford Jordan pair defined by a quadratic form of Witt index $3$   on a real vector space of dimension $10$. 

The complex Lie algebra of type $E_8$ has   two maximal inner ideals up to conjugation,  one whose subquotient is the Clifford pair $\mathcal Q_{14}$ and the other one a point space of dimension $8$. This time the
 TKK-construction applied to this Clifford pair $\mathcal Q_{14}$ does not give the full Lie algebra of type $E_8$ but only a subalgebra of type $D_8$. The lattice of $\mathfrak{e}_{8,-24}$ coincides with the lattice of inner ideals of the Clifford Jordan pairs defined by a quadratic form of Witt index $3$
 on a real vector space of dimension $14$.

%%%%%%%%%%%%%%%%%%%%%%%%%%%%%%%%%%%%
 
\subsection{Conclusions}

In the above subsections, we have found explicit descriptions of the inner ideals of the exceptional real Lie algebras, with the exception of those of type $G_2$, which are better understood.   These inner ideals, closely related to the different constructions of the exceptional Lie algebras, permit us to have a taste of the general behavior of the inner ideals. In a next paper, we will deal with some incidence geometries related to the inner ideals of the simple real algebras. 
We will take advantage of the very precise information obtained here.
There will appear  polar spaces, root shadows and some kinds of partial linear spaces. In some cases, the minimal inner ideals will constitute a Moufang set. 
These nice geometries will explain, in some way, the more involved structure of the inner ideals or of the extremal elements instead of considering them \emph{up to automorphism}, which often provide an insufficient answer. We mean, sometimes the \emph{vertical} distribution of the inner ideals, once we have chosen them compatible with a determined Cartan subalgebra, hides in some way a more intricate structure, which is far from being completely described only by saying that they are conjugated by an automorphism.

\bibliographystyle{alpha}
\bibliography{InnerIdealsRealAlgebras} 
\end{document}